\documentclass[12pt,a4paper]{amsart}
\usepackage{amsmath}%
\usepackage{amssymb}%
\usepackage{amscd}%
\usepackage{amsthm}%
\usepackage{amsfonts}%

\usepackage{enumerate}%
\usepackage{tikz}%
\usepackage{tikz-cd}%
\usepackage{verbatim}%
\usepackage{xspace}%
\usepackage{tensor}%
\usepackage{graphicx}%
\usepackage{appendix}%
\usepackage{color}%
\usepackage{epsfig}%
\usepackage{latexsym}%
\usepackage[hidelinks]{hyperref}%
\usepackage{fullpage}%
\usepackage{pdfsync}%

\numberwithin{equation}{section}%

\theoremstyle{plain}%
\newtheorem{theorem}{Theorem}[section]%
\newtheorem{proposition}[theorem]{Proposition}%
\newtheorem{corollary}[theorem]{Corollary}%
\newtheorem{lemma}[theorem]{Lemma}%
\newtheorem{hypothesis}[theorem]{Hypothesis}%

\theoremstyle{definition}%
\newtheorem{definition}[theorem]{Definition}%
\newtheorem{example}[theorem]{Example}%

\theoremstyle{remark}%
\newtheorem*{remark}{Remark}%
\newcommand\cut[1]{}%
\newcommand\further[1]{}%
\begin{document}
	\title[Infinite Moments of Class Groups]{Infinite Moments of Class Groups for Solvable Fields \\ with a Normal Abelian Subgroup}%
	\date{}%
	\author{Weitong Wang}%
	\address{College of Mathematical Sciences, Shaw Hall of Harbin Engineering University, No. 145, Nantong Street, Nangang District, Harbin, Heilongjiang Province 150001 China}%
	\email{weitongwang@hrbeu.edu.cn}%
\begin{abstract}
	We apply the class field theory and Minkowski bound to obtain an upper bound estimate for the number of solutions to the restricted ramifications when the Galois group is solvable.
	Together with suitable conditions on the solvable group and the ordering of number fields, we could prove an upper bound on specific field-counting problems, hence the infinite moment of the class groups.
	In particular, for non-Galois cubic fields ordered by the product of ramified primes, we could show that the $\mathbb{Z}/3\mathbb{Z}$-moment is infinite with the results on the $\mathbb{Z}/3\mathbb{Z}$-moment of quadratic number fields and the field-counting on cubic fields ordered by the generalized discriminant.
\end{abstract}	
	\maketitle
	\tableofcontents
	\section{Introduction}\label{section: intro}
	In this paper, we are mainly interested in the distribution of class groups of number fields.
	Let us use an example to explain the notion briefly.
	Let $\mathcal{C}$ be the set of quadratic number fields ordered by the absolute discriminant $d$.
	Define
	\begin{equation*}
		N_{\mathcal{C},d}(X)=\#\{K\in\mathcal{C}\mid d_K<X\},
	\end{equation*}
	which is the function that counts quadratic number fields ordered by discriminant.
	Then we can define the notations of probability and moments for class groups.
	Let $A$ be a finite abelian group, and $p$ be a rational prime.
	Define the $p$-rank of $A$, denoted by $\operatorname{rk}_pA$, as the largest number $r$ so that there exists some injective group homomorphism $(\mathbb{Z}/p\mathbb{Z})^r\to A$.
	Since $A$ is finite abelian, we also have
	\begin{equation*}
		\operatorname{rk}_{p}A=\dim_{\mathbb{F}_{p}}A/pA.
	\end{equation*}
	For each non-negative integer $r$, define
	\begin{equation*}
		\begin{aligned}
			\mathbb{P}_{\mathcal{C},d}(\operatorname{Cl}_K\cong A):=
			&\lim_{X\to\infty}
			\frac{
				\#\{K\in\mathcal{C}\mid d_K<X\text{ and }\operatorname{Cl}_K\cong A\}
			}
			{
				N_{\mathcal{C},d}(X)
			}
			\\
			\mathbb{P}_{\mathcal{C},d}(\operatorname{rk}_p\operatorname{Cl}_K\leq r):=
			&\lim_{X\to\infty}
			\frac{
				\#\{K\in\mathcal{C}\mid d_K<X\text{ and }\operatorname{rk}_p\operatorname{Cl}_K\leq r\}
			}
			{
				N_{\mathcal{C},d}(X)
			},
		\end{aligned}
	\end{equation*}
	if the limit exists,
	and call it the probability of $\operatorname{Cl}_K\cong A$, resp. $\operatorname{rk}_p\operatorname{Cl}_K\leq r$.
	Define the $A$-moment of $\operatorname{Cl}_K$ to be
	\begin{equation*}
		\mathbb{E}_{\mathcal{C},d}(\lvert\operatorname{Hom}(\operatorname{Cl}_K,A)\rvert):=
		\lim_{X\to\infty}
		\frac{
			\sum_{
				\substack{
					K\in\mathcal{C}\\
					d_K<X
				}
			}
			\lvert\operatorname{Hom}(\operatorname{Cl}_{K},A)\rvert
		}{
			N_{\mathcal{C},d}(X)
		},
	\end{equation*}
	provided that the limit exists.
	When $p$ is an odd prime, Cohen and Lenstra~\cite{CL84} gives the prediction for the probability distribution of $\operatorname{Cl}_K\otimes\mathbb{Z}_p\cong A$, where $A$ is a finite abelian $p$-group.
	When the set of fields $\mathcal{C}$ is generalized to the set of Galois $G$-fields, where $G$ is a finite group, Cohen and Martinet~\cite{CM90} gives the corresponding generalization of Cohen-Lenstra Heuristics.
	The method of Cohen and Martinet could be applied to non-Galois cases and obtain the corresponding predictions.
	See the author and Wood~\cite{wang2021moments}.
	Though this area is widely open, there are some proven results.
	A famous one is obtained by Davenport and Heilbronn~\cite{Davenport1971Cubic}.
	In the context of distribution of class groups, we can translate their result into the following: 
	for quadratic number fields, the $\mathbb{Z}/3\mathbb{Z}$-moment of class groups is exactly what is predicted by Cohen-Lenstra-Martinet Heuristics.
	Another result is the work of Alex Smith~\cite{smith2026distributionI,smith2026distributionII}, which proves that the distribution of $\operatorname{Cl}_K\otimes\mathbb{Z}_2$ satisfies the Gerth's Conjecture~\cite{Gerth1987} when $K$ runs over quadratic number fields.
	
	We discuss in detail the case where $p=2$ and $K$ quadratic.
	If we apply Genus Theory (see Ishida~\cite{ishida1976genus} for example) to quadratic number fields, then
	\begin{equation*}
		\omega(d_K)-1\leq\operatorname{rk}_2\operatorname{Cl}_K\leq\omega(d_K),
	\end{equation*}
	where $\omega(n)$ counts all the distinct prime factors of an integer $n$.
	This implies that for each non-negative integer $r$, we have that
	\begin{equation*}
		\mathbb{P}_{\mathcal{C},d}(\operatorname{rk}_2\operatorname{Cl}_K\leq r)=0,
	\end{equation*}
	and
	\begin{equation*}
		\mathbb{E}_{\mathcal{C},d}(\lvert\operatorname{Hom}(\operatorname{Cl}_K,\mathbb{Z}/2\mathbb{Z})\rvert)=+\infty.
	\end{equation*}
	We can call this phenomenon ``zero-probability'', resp. ``infinite moment'' in short.
	This means that the distribution of $\operatorname{Cl}_K\otimes\mathbb{Z}_2$ is qualitatively different from other Sylow $p$-subgroups of $\operatorname{Cl}_K$.
	In particular, the original Cohen-Lenstra Heuristics \emph{cannot} be applied to this case (there is literally no prediction from the heuristics).
	This is a motivation for Gerth's Conjecture.
	And we will follow this phenomenon in this paper.
	
	Let us introduce some notations so that we can make statements clearly.
	\begin{definition}\label{def:set of primes}
		Fix a number field $K$.
		Denote by $\mathcal{P}_K$ the set of all primes of $K$, including the ones at infinity.
		When $K=\mathbb{Q}$, let $\mathcal{P}:=\mathcal{P}_{\mathbb{Q}}$ be the set of all rational primes and the infinity.
		Given a set $\mathfrak{R}$ of ideals of $K$, define $\mathcal{P}_{\mathfrak{R}}$ as the set of all primes in $\mathfrak{R}$.
	\end{definition} 
	An example of $\mathfrak{R}$ is the set given by an ideal class of $\operatorname{Cl}_K$.
	Given a field $F$, we say that $A$ is an $F$-algebra if $A$ is a (not necessarily commutative) ring with a fixed injective map $F\to A$ such that $F$, as a subalgebra, is included in the center of $A$.	
	To abstractly define a field extension up to isomorphism so that we could count them, let us introduce the following notation.
	\begin{definition}\label{def: Galois algebras}		
		Let $G$ be a finite group, and $F$ be a fixed field.
		We say that $(A/F,\varphi_A)$ is a Galois $(G,F)$-algebra if $A$ is an {\'e}tale $F$-algebra of degree $\lvert G\rvert$ with a $G$-action defined by $\varphi_A:G\hookrightarrow\operatorname{Aut}_F(A)$ such that $A^G=F$.
		A morphism between two Galois $(G,F)$-algebras $A$ and $B$ is a morphism of $F$-algebras $f:A\to B$ such that for each $g\in G$, the following diagram commutes
		\begin{equation*}
			\begin{tikzcd}
				A\arrow[r,"f"]&B\arrow[d,"\varphi_B(g)"]\\
				A\arrow[u,leftarrow,"\varphi_A(g)"]&B\arrow[l,leftarrow,"f"]
			\end{tikzcd}
		\end{equation*}
		If a Galois $(G,F)$-algebra $A$ is a field itself, then we simply call it a Galois $G$-field extension over $F$.
	\end{definition}
	If $(A/F,\varphi_A)$ is a Galois $G$-field extension, then $\varphi_A:G\to\operatorname{Gal}(A/F)$ is a group isomorphism, for $[A:F]=\lvert G\rvert$ and $A^G=F$.
	To some sense, the notion of Galois algebras is a generalization of the Galois field extensions with a fixed $G$-action.
	We will show the following properties of Galois algebras.
	\begin{definition}\label{def: induced algebra}
		Let $G$ be a finite group with a fixed subgroup $H$, and $F$ be a fixed field.
		Let $A$ be an $F$-algebra with an $H$-action ($H\to\operatorname{Aut}_{F}(A)$).
		Define $\operatorname{Ind}^{G}_{H}A$ to be the $G$-algebra as follows.
		As a set, define
		\begin{equation*}
			\operatorname{Ind}^{G}_{H}A:=\{\tau\in\operatorname{Map}(G,A)\mid\forall h\in H,\forall g\in G,\, \tau(hg)=h\tau(g)\}.
		\end{equation*}
		The addition and the multiplication are just defined pointwise:
		\begin{equation*}
			(\tau_{1}+\tau_{2})(g)=\tau_{1}(g)+\tau_{2}(g)\quad\text{and}\quad (\tau_{1}\cdot\tau_{2})(g)=\tau_{1}(g)\cdot\tau_{2}(g).
		\end{equation*}
		For each $\lambda\in F$, we have $(\lambda\tau)(g)=\lambda\tau(g)$.
		For each $x,g\in G$, we have $(x\cdot\tau)(g)=\tau(x^{-1}g)$.
		In particular, as a $G$-module, we have
		\begin{equation*}
			\operatorname{Ind}^{G}_{H}A\cong F[G]\otimes_{F[H]}A.
		\end{equation*}
		And $\operatorname{Ind}^{G}_{H}$ is a covariant functor from the category of $(H,F)$-algebras to the category of $(G,F)$-algebras such that for each $(G,F)$-algebra $B$ which is also viewed as an $H$-algebra (via the restriction), we have
		\begin{equation*}
			\operatorname{Hom}_{G}(\operatorname{Ind}^{G}_{H}A,B)\cong\operatorname{Hom}_{H}(A,B)
		\end{equation*}
		where the isomorphism is given by the natural transformation:
		the morphism of $(H,F)$-algebras $\varphi:A\to B$ is sent to the morphism of $(G,F)$-algebras
		\begin{equation*}
			\Phi:\operatorname{Ind}^{G}_{H}A\to B,\, \tau\mapsto\sum_{gH\in G/H}g\cdot\varphi(\tau(g^{-1})).
		\end{equation*}
	\end{definition}
	The basic structure of the Galois algebras is given by the following.
	Roughly speaking, every Galois $(G,F)$-algebra $A$ is of the form $A\cong\operatorname{Ind}^{G}_{H}K$ as Galois $(G,F)$-algebras, where $H$ is a subgroup of $G$, and $K/F$ is a Galois $H$-extension.
	\begin{theorem}\label{thm: structure of Galois algebras}
		Let $G$ be a finite group, $F$ be a field such that $\operatorname{char}(F)\nmid\lvert G\rvert$, and $(A/F,\varphi_A)$ be a Galois $(G,F)$-algebra.
		For each primitive (central) idempotent $e\in A$, let $G_e:=\operatorname{Stab}_G(e)$ be its stabilizer.
		The component $eA$ is a Galois $G_e$-extension over $F$ with the group isomorphism $\varphi_{eA}:G_e\to\operatorname{Gal}(eA/F)$ defined by $g\mapsto\varphi_A(g)\vert_{eA}$.
		In particular, $A\cong\operatorname{Ind}^G_{G_e} eA$ as Galois $(G,F)$-algebras, where $\operatorname{Ind}$ means the induced algebra.
	\end{theorem}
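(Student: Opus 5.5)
We plan to use the standard structure theory of étale algebras together with a counting argument on the $G$-set of primitive idempotents. Since $A$ is étale over $F$, it decomposes uniquely as a finite product of separable field extensions, $A\cong\prod_{i=1}^{r}A_i$. Equivalently, the set $E$ of primitive idempotents of $A$ is finite, satisfies $\sum_{e\in E}e=1$, and $ee'=0$ for $e\neq e'$. Each $\varphi_A(g)$ is an $F$-algebra automorphism, so it permutes $E$; hence $G$ acts on $E$. We first show this action is transitive. Let $O\subseteq E$ be an orbit. Then $f_O=\sum_{e\in O}e$ is a $G$-invariant idempotent, so $f_O\in A^G=F$. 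The only idempotents of the field $F$ are $0$ and $1$, and $f_O\neq 0$, so $f_O=1$. Hence $O=E$.

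Now fix $e\in E$ with stabilizer $G_e$. By orbit–stabilizer, $\lvert E\rvert=[G:G_e]$. The elements of $G_e$ preserve $eA$, so restriction gives a homomorphism $G_e\to\operatorname{Aut}_F(eA)$. We then show that $(eA)^{G_e}=F e$. Take $x\in eA$ fixed by $G_e$ and form $y=\sum_{gG_e\in G/G_e}g\cdot x$. This is well defined, lies in $A^G=F$, and satisfies $ey=x$, because the translates $g\cdot x$ lie in the distinct components $g\cdot eA$. Hence $x\in Fe$.

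The key step is then the degree count. Transitivity makes all components isomorphic as $F$-algebras, so $\lvert G\rvert=\dim_F A=[G:G_e]\,[eA:F]$, which gives $[eA:F]=\lvert G_e\rvert$. It remains to show $G_e\to\operatorname{Aut}_F(eA)$ is injective. Suppose $g\in G_e$ acts trivially on $eA$. Then $g$ acts trivially on every component $heA$, since $h^{-1}gh$ fixes $e$, and this needs care. The cleaner route is Artin's theorem: the image $\bar G_e$ satisfies $(eA)^{\bar G_e}=F$, so $[eA:F]=\lvert\bar G_e\rvert$. Comparing with $\lvert G_e\rvert$ forces the kernel to be trivial. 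Thus $eA/F$ is Galois with group $G_e$ via $\varphi_{eA}$. Note that this route uses neither the assumption $\operatorname{char}F\nmid\lvert G\rvert$ nor any normal-subgroup argument.

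Finally, we construct the isomorphism $\operatorname{Ind}^G_{G_e}eA\to A$ using the adjunction in Definition~\ref{def: induced algebra}. Apply it to the $G_e$-morphism $eA\hookrightarrow A$, which preserves multiplication but not the unit. This gives $\Phi(\tau)=\sum_{gG_e}g\cdot\tau(g^{-1})$, which is $G$-equivariant and multiplicative, since the summands lie in the orthogonal components $g eA$. It sends $1$ to $\sum_{e'\in E}e'=1$. It is surjective because every component $geA$ is hit, and both sides have dimension $\lvert G\rvert$, so $\Phi$ is an isomorphism. The main obstacle is bookkeeping rather than ideas: checking that $\Phi$ is a unital algebra map even though $eA\to A$ is non-unital, and checking that the left/right conventions in $\tau(hg)=h\tau(g)$ match the sum over $G/G_e$.
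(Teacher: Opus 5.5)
Your proposal is correct and follows essentially the same route as the paper: transitivity via the $G$-invariant orbit idempotent lying in $A^G=F$, the identification $(eA)^{G_e}=Fe$ by a lifting sum, the degree count combined with Artin's theorem to get injectivity of $\varphi_{eA}$, and the adjunction map $\operatorname{Ind}^G_{G_e}eA\to A$, shown bijective by surjectivity plus a dimension count. The only difference is that your coset sum $\sum_{gG_e}g\cdot x$ equals the paper's $\frac{1}{\lvert G_e\rvert}\sum_{g\in G}\varphi_A(g)(ex)$ without the division, which is why you are right that the hypothesis $\operatorname{char}F\nmid\lvert G\rvert$ is not actually needed.
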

	See Section~\ref{sec: specifications} for the discussions on the Galois algebras.
	The theorem immediately implies that for each primitive idempotent $e$ and $e'$ of $A$, the fields $eA$ and $e'A$ are isomorphic.
	Actually we will show that they could be identified as Galois $F$-algebras by a conjugate from $G$ (clearly $G_e$ and $G_{e'}$ are conjugate in $G$).
	Therefore, we could define the ramification of $A$ over $F$ when $F$ is a local field, generalizing the case when $A$ is a field.
	\begin{definition}\label{def: ramification of Galois algebra}
		Let $G$ be a finite group, $F/\mathbb{Q}_p$ be a local field, and $A/F$ be a Galois $(G,F)$-algebra.
		Assume that $e\in A$ is a primitive idempotent and $\mathfrak{p}$ is the valuation of $F$.
		Then the inertia subgroup $I_{\mathfrak{p}}\subseteq G_e\subseteq G$ is the inertia subgroup of $A/F$ up to conjugation.
		When $\mathfrak{p}\nmid\lvert G\rvert$, the inertia subgroup $I_{\mathfrak{p}}$ is cyclic and generated by some element $y_{\mathfrak{p}}\in G$ up to conjugation and invertible powering.		
	\end{definition}
	Let $K/k$ be a Galois number field extension $K/k$ with $G(K/k)\cong G$, the definition of specifications at $\mathfrak{p}$ reflects the global-local principle, that is, $K_{\mathfrak{p}}=K\otimes k_{\mathfrak{p}}=\operatorname{Ind}^G_{G_{\mathfrak{P}}}K_{\mathfrak{P}}$, where $\mathfrak{P}$ is a prime of $K$ above $\mathfrak{p}$.
	So, we can define local specifications by Galois algebras.
	\begin{definition}\label{def: local specification}
		Let $k$ be a number field and $G$ be a finite group.
		\begin{enumerate}
			\item For each prime $\mathfrak{p}$ of $k$, we say that $\Sigma_{\mathfrak{p}}$ is a $G$-specification at $\mathfrak{p}$ if it is a Galois $(G,k_{\mathfrak{p}})$-algebra.
			\item Let $S$ be a (possibly infinite) set of primes of $k$ including the infinite ones.
			We say that $\Sigma=(\Sigma_{\mathfrak{p}})_{\mathfrak{p}\in S}$ is a $G$-specification at $S$ if it is a product of $G$-specifications at $\mathfrak{p}\in S$ such that $\Sigma_{\mathfrak{p}}/k_{\mathfrak{p}}$ is unramified for almost all $\mathfrak{p}\in S$.
			\item When $S$ is taken to be the set of all primes of $k$, we simply say that a specification $\Sigma$ at $S$ is a $G$-local specification.
			\item Let $\Sigma$ be a $G$-specification at $S$, and $K/k$ be a Galois $G$-extension.
			We say that $K$ is a solution to $(G,k,\Sigma)$, denoted by $K\sim\Sigma$, if $K$ is unramified outside $S$ and for each $\mathfrak{p}\in S$ we have $K_{\mathfrak{p}}=K\otimes k_{\mathfrak{p}}\cong\Sigma_{\mathfrak{p}}$ as Galois $(G,k_{\mathfrak{p}})$-algebras.
			If there exists at least one solution $K/k$ to $(G,k,\Sigma)$, then we say that $\Sigma$ is admissible.
		\end{enumerate}
	\end{definition}	
	Using the notation of local specifications, we can generalize the notion of the (absolute) discriminant.
	Let $K/k$ be a Galois $G$-extension of number fields, then a prime $\mathfrak{p}\nmid\lvert G\rvert$ of $k$ admits an inertia generator $y_{\mathfrak{p}}$ (up to conjugate) as an element of $G$.
	The ramification of $\mathfrak{p}$ in $K/k$ will determine its exponent in the relative discriminant $\mathfrak{d}_{K/k}$.
	Similar description also works for the conductor (when $G$ is abelian) and the Artin conductor in general.
	Let us follow this idea and define the generalized discriminant as follows.
	See also Wood~\cite[Section 2]{wood2010probabilities}.	
	We first introduce the notation of the set of fields.
	\begin{definition}\label{def:set of fields}
		Fix a number field $k$, and a transitive permutation group $G\subseteq S_n$ with $d=\lvert G\rvert$.
		For a field extension $K/k$, let $\hat{K}/k$ be its Galois closure.
		We say that $K/k$ is a $(d,G)$-extension if $(\hat{K}/k,\varphi)$ is a Galois $G$-extension and $K=\hat{K}^{G_1}$ where $G_1=\operatorname{Stab}_G(1)$.
		Two $G$-extensions $K_1/k$ and $K_2/k$ are isomorphic if $(\hat{K}_1/k,\varphi_1)\cong(\hat{K}_2/k,\varphi_2)$ as Galois $(G,k)$-fields.
		Define ${\mathcal{C}}(G,k)$ as the set of $G$-extensions $(K,\psi)$ up to isomorphism.
		If the base field $k=\mathbb{Q}$, then we just omit it and write ${\mathcal{C}}(G):={\mathcal{C}}(G,\mathbb{Q})$.
	\end{definition}
	\begin{remark}
		\begin{enumerate}
			\item There are alternative ways to define the set of fields.
			See Wood~\cite[p.292]{directions2016} for their differences and connections.
			\item Note that if $G$ is abelian, and we view $G$ as a transitive permutation group by its operation on itself, then $\mathcal{C}(G,k)$ simply means the set of abelian $G$-extensions over $k$.
			\item If $G$ is abelian, there is a one-to-one correspondence between the surjective continuous homomorphisms $\operatorname{C}_k\to G$, where $\operatorname{C}_k$ is the id{\`e}les class group, and the set of $G$-extensions $K/k$.
			By Class Field Theory, the open normal subgroups of $\operatorname{C}_k$ corresponds to the abelian field extensions.
			And in general, there are multiple surjective maps $\operatorname{C}_k\to G$ with the same kernel $N_{K/k}\operatorname{C}_K$.			
			But they define different actions of $G$ on the extension $K/k$, that is, they specify different isomorphisms $\operatorname{Gal}(K/k)\cong G$.
			So, we know that different surjective maps $\operatorname{C}_k\to G$ with the same kernel corresponds to different $G$-extensions $K/k$, though the underlying fields $K$ are isomorphic.
		\end{enumerate}
	\end{remark}
	Let us give the notation of counting number fields.
	\begin{definition}\label{def: counting number fields}
		Let $S$ be a countable set with a function $C:S\to\mathbb{R}_{\geq0}$ such that for each $X>0$ the set $\{a\in S\mid C(a)<X\}$ is finite.
		Define
		\begin{equation*}
			N_{S,C}(X):=\#\{a\in S\mid C(a)<X\}.
		\end{equation*}
	\end{definition}
	For now, we have seen that the (absolute) discriminant could work as a counting function.
	However, in some cases, ordering fields by discriminant will contradict what is predicted by the heuristics.
	See Cohen and Martinet~\cite{Cohen1994HeuristicsOC}, Bartel and Lenstra~\cite{bartel2020class} for example.
	See also Wood~\cite{wood2010probabilities} for some discussions on different orderings from a field-counting point of view.
	The choice of the counting function may affect the result of field-counting in a nontrivial way.
	But we are not going to discuss it in detail here.
	Let us give the definition of a counting function based on local specifications.
	\begin{definition}\label{def: generalized discriminant}
		Let $G$ be a transitive permutation group with $\operatorname{Stab}_{G}(1)$ being trivial, and $\mathcal{C}$ be a subset of $\mathcal{C}(G,K)$ with $K$ a fixed number field.
		\begin{enumerate}
			\item We say that $C:\mathcal{C}\to\mathbb{R}_+$ is a \emph{counting function} if for each $\mathfrak{p}\in\mathcal{P}_K$ there exists a function $C_{\mathfrak{p}}:\{$ specifications at $\mathfrak{p}\}\to\mathbb{R}_{+}$ such that
			\begin{equation*}
				C(L/K)=\prod_{\mathfrak{p}\in\mathcal{P}_K}C(L_{\mathfrak{p}}/K_{\mathfrak{p}})
				\quad\text{and}\quad
				\forall X\in\mathbb{R}_+,\,\#\{L/K\in\mathcal{C}\mid C(L/K)<X\}<\infty
			\end{equation*}
			where $L_{\mathfrak{p}}=L\otimes_K K_{\mathfrak{p}}$.
			\item For each $g_{1},g_{2}\in G$, we say that they are equivalent under conjugation and invertible powering, denoted by $g_{1}\sim g_{2}$, if there exists some integers $a,b\in\mathbb{Z}$ and $h\in G$ such that
			\begin{equation*}
				g_{1}=hg_{2}^{a}h^{-1}\quad\text{and}\quad g_{2}=h^{-1}g_{1}^{b}h.
			\end{equation*}
			\item We call a function $c_G:G\to\mathbb{R}_{\geq0}$ as a \emph{weight} of $G$ if it satisfies the following two conditions simultaneously:
			\begin{enumerate}[(i)]
				\item $c_G(g)=0$ if and only if $g$ is the identity of $G$;
				\item for each $g,h\in G$, if $g\sim h$, then $c_{G}(g)=c_{G}(h)$.
			\end{enumerate} 
			\item For each prime $\mathfrak{p}\nmid\lvert G\rvert\infty$ of $K$, and for each $G$-specification $\Sigma_{\mathfrak{p}}$ at $\mathfrak{p}$, let $y_{\mathfrak{p}}$ be the inertia generator defined up to invertible powering and conjugation.
			Define the local discriminant $C_{\mathfrak{p}}:\{G-$specifications at $\mathfrak{p}\}\to\mathbb{R}_{+}$ with respect to the weight $c_G$ by the following:
			\begin{equation*}
				C_{\mathfrak{p}}(\Sigma_{\mathfrak{p}})=\mathfrak{Np}^{c_{G}(y_{\mathfrak{p}})},
			\end{equation*}
			where $\mathfrak{Np}$ is the absolute norm for finite prime $\mathfrak{p}$.
			For each $\mathfrak{p}\mid\lvert G\rvert\infty$ of $K$, a local generalized discriminant $C_{\mathfrak{p}}$ is any map $C_{\mathfrak{p}}:\{G-$specifications at $\mathfrak{p}\}\to\mathbb{R}_{+}$.
			\item We define the \emph{generalized discriminant} $C:\mathcal{C}\to\mathbb{R}_+$ with respect to $c_G$ and $\{C_{\mathfrak{p}}\}_{\mathfrak{p}\mid\lvert G\rvert\infty}$ as a counting function by the formula
			\begin{equation*}
				C(L/K):=\prod_{\mathfrak{p}}C_{\mathfrak{p}}(L_{\mathfrak{p}}/K_{\mathfrak{p}})
			\end{equation*}			 
			\item We sometimes omit the data for $\mathfrak{p}\mid\lvert G\rvert\infty$ and just say $C$ is a generalized discriminant with respect to $c_G$.
			Moreover, we say that two generalized discriminants $C_1$ and $C_2$ are equivalent if there exists some positive number $a$, for each $\mathfrak{p}\nmid\lvert G\rvert\infty$ and for each $G$-specification $\Sigma_{\mathfrak{p}}$ at $\mathfrak{p}$ we have
			\begin{equation*}
				C_{1,\mathfrak{p}}(\Sigma_{\mathfrak{p}})=C_{2,\mathfrak{p}}(\Sigma_{\mathfrak{p}})^{a}.
			\end{equation*}
			In other words, for a Galois $G$-extension $L/K$, the generalized discriminants $C_1$ is a power of $C_2$ up to the wildly ramified primes.
		\end{enumerate}		
	\end{definition}
	Note that when $\mathfrak{p}\nmid\lvert G\rvert\infty$, the value $c_G(y_{\mathfrak{p}})$ is independent of the choice of the inertia generator $y_{\mathfrak{p}}$, which is well-defined up to conjugation and invertible powering.
	We will generalize this definition to count morphisms later.
	Let us define the following notations to describe the distribution of class groups.
	\begin{definition}\label{def: p-rank and moment}
		Let $\mathcal{C}$ be a set of number fields with a counting function $C$, and $p$ be a finite rational prime.
		\begin{enumerate}
			\item Define
			\begin{equation*}
				\mathbb{P}_{\mathcal{C},C}(\operatorname{rk}_p\operatorname{Cl}_K\leq r):=\lim_{X\to\infty}\frac{\#\{K\in\mathcal{C}\mid C(K)<X\text{ and }\operatorname{rk}_p\operatorname{Cl}_K\leq r\}}{N_{\mathcal{C},C}(X)}.
			\end{equation*}
			\item For a fixed finite abelian group $B$, define the $B$-moment of $\operatorname{Cl}_K$ where $K$ runs over fields in $\mathcal{C}$ ordered by $C$ to be
			\begin{equation*}
				\mathbb{E}_{\mathcal{C},C}(\lvert\operatorname{Hom}(\operatorname{Cl}_K,B)\rvert):=
				\lim_{X\to\infty}
				\frac{
					\sum_{
						\substack{
							K\in\mathcal{C}\\
							C(K)<X
							}
						}
						\lvert\operatorname{Hom}(\operatorname{Cl}_K,B)\rvert
					}{
					N_{\mathcal{C},C}(X)
					},
			\end{equation*}
			if the limit exists.
		\end{enumerate}
	\end{definition}
	The following result is on the distribution of class groups.
	\begin{theorem}\label{thm: solvable group with abelian normal subgroup S1}
		Let $G$ be a transitive permutation group with $\operatorname{Stab}_G(1)$ trivial, and $N$ be an abelian normal subgroup such that $\gcd(\lvert N\rvert,\lvert G/N\rvert)=1$.
		Define
		\begin{equation*}
			\mathcal{C}:=\{L\in\mathcal{C}(G)\mid \mu(L)=\mu(\mathbb{Q})\},
		\end{equation*}
		where $\mu(L)$ is the group of roots of unity.
		Fix a rational prime $p\mid\lvert N\rvert$.
		Let $c_{G}$ be a weight such that $\displaystyle{m:=\min_{e_{G}\neq g\in G}\{c_{G}(g)\}}$ and $c_{G}(g)=m$ only if $g\in N$ and $r_g\equiv0\bmod{p}$, where $r_g$ is the order of $g$ in the group.
		Define $\Omega:=\{g\in G\mid c_G(g)=m\}$, and $C$ to be a generalized discriminant associated to $c_{G}$.
		Let $H$ be a complement of $N$ in $G$.
		If $H$ is solvable, then for each $r\in\mathbb{Z}_{\geq0}$, we have
		\begin{equation*}
			\mathbb{P}_{\mathcal{C},C}(\operatorname{rk}_p\operatorname{Cl}_K\leq r)=0,
		\end{equation*}
		where $\operatorname{rk}_p$ is the $p$-rank of finite abelian groups which could be defined as $\operatorname{rk}_p A:=\operatorname{dim}_{\mathbb{F}_p}A/pA$.
		In addition, we have
		\begin{equation*}
			\mathbb{E}_{\mathcal{C},C}(\lvert\operatorname{Hom}(\operatorname{Cl}_K,\mathbb{Z}/p\mathbb{Z})\rvert)=+\infty.
		\end{equation*}
	\end{theorem}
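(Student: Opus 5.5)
The idea is to imitate the genus-theory argument for quadratic fields. A field $L\in\mathcal{C}$ with many primes whose inertia generator lies in $\Omega$ should acquire a large $p$-rank in its class group. The counting function $C$ is dominated by such fields, because the minimal weight $m$ is attained only on $\Omega$. Write $F:=L^{N}$, which is a Galois $H$-extension of $\mathbb{Q}$ (recall $G=N\rtimes H$ by Schur--Zassenhaus). Then $L/F$ is abelian with group $N$. Every prime $\ell$ with $y_\ell\in\Omega\subseteq N$ has inertia inside $N$ of order divisible by $p$, so it is unramified in $F$ and ramifies in $L/F$ with $p\mid e$.

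\textbf{Step 1 (genus/ambiguous class bound).} Let $M$ be the maximal subextension of $L/F$ of exponent $p$ whose Galois group is a quotient of $N/N^p$. Apply class field theory to $L/F$: the ambiguous class number formula, or equivalently counting unramified $p$-extensions of $L$ obtained by composing $L$ with $\mathbb{Z}/p$-extensions of $F$ ramified at the same primes. Each such composite is unramified over $L$ by Abhyankar's lemma, since $p\mid e$ at those primes. This should yield
\begin{equation*}
\operatorname{rk}_p\operatorname{Cl}_L\;\geq\; t(L)-c,
\end{equation*}
where $t(L)$ is the number of rational primes $\ell\nmid|G|$ with $y_\ell\in\Omega$. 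The constant $c$ depends only on $G$ and $[F:\mathbb{Q}]$; it absorbs units (bounded via Dirichlet's unit theorem, using $\mu(L)=\mu(\mathbb{Q})$ to control the $p$-torsion of units) and the primes dividing $|G|\infty$. Concretely, I would count $\mathbb{Z}/p$-characters of the idele class group of $F$ that are ramified only at primes above those $\ell$ and have prescribed local behaviour there. Via Minkowski and Dirichlet, the obstruction group (global units modulo local units, and $\operatorname{Cl}_F[p]$) has $p$-rank bounded linearly in $\operatorname{rk}_p\operatorname{Cl}_F$ plus a constant.

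\textbf{Step 2 (controlling $\operatorname{Cl}_F$).} The trouble is that $\operatorname{rk}_p\operatorname{Cl}_F$ might also grow, which would weaken the lower bound in Step 1. Since $\gcd(p,|H|)=1$ and $H$ is solvable, I would argue inductively along a composition series of $H$, which is where solvability is used. The aim is that for fields ordered by $C$, the base fields $F$ appearing have bounded discriminant on a positive-proportion subfamily. Indeed, because $c_G$ takes its minimum only on $N$, fixing $F$ and counting $L/F$ with $C(L)<X$ gives order $X^{1/m}$ times a power of $\log X$. Primes ramified in $F$ have weight larger than $m$, so summing over $F$ converges on the $X^{1/m}$ scale up to the logarithmic power. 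Thus the count is dominated by finitely many $F$ up to an arbitrarily small proportion.

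\textbf{Step 3 (counting).} For fixed $F$, counting $N$-extensions $L/F$ with prescribed $H$-action via class field theory, i.e. a Wright-type Tauberian argument, shows that $N_{\mathcal{C},C}(X)\asymp X^{1/m}(\log X)^{b-1}$. The exponent $b$ is the number of $\mathbb{Q}$-rational orbits of $\Omega$-classes, and fields with $t(L)\leq T$ contribute $O(X^{1/m}(\log X)^{b-2}(\log\log X)^{T})$. The latter is $o(N_{\mathcal{C},C}(X))$. Combined with Step 1, this gives $\mathbb{P}(\operatorname{rk}_p\leq r)=0$. The infinite moment then follows, since $|\operatorname{Hom}(\operatorname{Cl}_L,\mathbb{Z}/p)|=p^{\operatorname{rk}_p}$ exceeds any bound on a proportion tending to $1$.

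The main obstacle is the lower bound in Step 3. One needs a lower bound for the number of fields, not just an upper bound, which requires showing that the relevant specifications are admissible with positive density; I would obtain this by explicitly constructing $N$-extensions of a fixed $F$ via ray class characters. One also needs the uniform upper bound summed over all $F$, which uses the Minkowski bound on class groups of $F$ together with the hypothesis that $H$ is solvable.
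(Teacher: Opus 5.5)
Your Step~1 is sound, but Step~2 has a genuine gap, and it is the crux of the theorem.

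\textbf{The gap: the uniform upper bound over $F$.} Fibering over $F=L^{N}$, the natural bound for fixed $F$ is as follows. The number of $L\supseteq F$ with $C(L)<X$ and $t(L)\le T$ is at most $\lvert\operatorname{Hom}_H(\operatorname{Cl}_F,N)\rvert$ times local factors times $(X/\bar C(F))^{1/m}$, up to logarithms. Here $\bar C(F)$ is the part of $C(L)$ at primes ramified in $F$; compare Lemma~\ref{lemma: upper bound estimate S6}.

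Summing over $F$ therefore needs an \emph{averaged} torsion bound, roughly $\sum_{\bar C(F)<Y}\lvert\operatorname{Hom}_H(\operatorname{Cl}_F,N)\rvert\ll Y^{1/m-\delta}$. Convergence of $\sum_F\bar C(F)^{-1/m}$, which is all the weight gap gives, is not enough. Feeding in Minkowski, $\lvert\operatorname{Cl}_F\rvert\ll d_F^{1/2+\epsilon}$, with $d_F\le\operatorname{rad}(d_F)^{\lvert H\rvert}$ up to wild factors, makes the multiplicity $\operatorname{rad}(d_F)^{A}$ for a fixed $A>0$.

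Let $m'$ be the least weight outside $\Omega\cup\{e_G\}$. The $F$ ramified at a single tame prime $q$ already contribute about $\sum_{q^{m'}<X}q^{A}\asymp X^{(A+1)/m'}/\log X$ to any majorant built this way. This exceeds $X^{1/m}$ by a power of $X$ once $m'<m(A+1)$, while the hypotheses only give $m'>m$ (e.g.\ $m'=m+1$ with $m$ large). For nonabelian $H$ the same defect recurs one level down, when bounding the number of $F$. So ``solvability plus Minkowski'' does not give the uniform upper bound, and ``dominated by finitely many $F$'' is unsupported.

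The paper's own treatment of this step hits the same wall. It combines Theorem~\ref{thm: restricted ramification for solvable extensions}, i.e.\ $\#\mathcal{C}(\Sigma)\le c\operatorname{rad}(d_{K_{l-1}})^{A}$ with $A$ the exponent called $N$ in (\ref{eqn: moment for solvable extensions 1}), with the majorants of Proposition~\ref{prop: moment for solvable extensions}. The Euler factor $1+aq^{A-c_G(g)s}$ is summable at $s=1/m$ only if $c_G(g)>m(A+1)$, not merely $c_G(g)-A>m$. That condition is invariant under $c_G\mapsto rc_G$, so passing to $C^{r}$ does not absorb the factor. What is really needed is an averaged class-group input of the kind Section~\ref{section: product of ramified primes} assumes as Hypothesis~\ref{hypthesis}.

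\textbf{Two smaller points.} First, the worry motivating Step~2 is misplaced: $\operatorname{Cl}_F$ never weakens the rank bound. Take $\mathfrak{m}=\prod\mathfrak{l}$ over the primes of $F$ above the $\Omega$-primes, and consider $\mathcal{O}_F^{\times}\to\prod_{\mathfrak{l}}(\mathcal{O}_F/\mathfrak{l})^{\times}\to\operatorname{Cl}_F^{\mathfrak{m}}\to\operatorname{Cl}_F\to1$. The class group only adds extensions; the sole obstruction is the unit group, of $p$-rank at most $[F:\mathbb{Q}]$. Also $p\mid\mathfrak{N}\mathfrak{l}-1$, since abelian tame inertia is a quotient of $(\mathcal{O}_F/\mathfrak{l})^{\times}$. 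Composing $L$ with the elementary abelian $p$-part of this ray class field and applying Abhyankar gives $\operatorname{rk}_p\operatorname{Cl}_L\ge t(L)-[F:\mathbb{Q}]-\operatorname{rk}_pN$ unconditionally. This is the shape of the bound the paper imports from \cite[Theorem 3.1]{wang2026invariant}; $\operatorname{Cl}_F$ matters only for counting.

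Second, you call the lower bound the main obstacle, but over a single fixed $F$ that part is routine. You also do not need the order $X^{1/m}(\log X)^{b-1}$ of the total; its upper half would need the same missing uniform bound. The paper instead compares $N_{\mathcal{C}_\Omega^{\gamma},C}$ with $N_{\mathcal{C}_\Omega^{\gamma+1},C}$. Their expected orders are $X^{1/m}(\log X)^{-1}(\log\log X)^{\gamma-1}$ and $X^{1/m}(\log X)^{-1}(\log\log X)^{\gamma}$, which do not involve $b$. So only a lower bound for the latter is required, taken from \cite[Lemma 7.11]{wang2026invariant}. All the difficulty sits in the upper bound for $N_{\mathcal{C}_\Omega^{\gamma},C}$ uniformly over $F$, which is exactly the gap above.
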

	See Section~\ref{section: Solvable extension with a normal abelian subgroup} for its proof. %
	This result focuses on the case when the Galois group $G$ is solvable and includes an abelian normal subgroup.
	With a suitable generalized discriminant, we have the zero-probability distribution for the $p$-primary part of the class groups and the infinite $\mathbb{Z}/p\mathbb{Z}$-moment.
	When we know more about the moment of class groups, we could prove some statistical results when the generalized discriminant is the product of ramified primes.
	To be precise, we have the following.
	\begin{theorem}\label{thm: cubic fields S1}
		Let $S_{3}$ be the symmetric group acting on $3$ elements, and $\mathcal{C}:=\mathcal{C}(S_3,\mathbb{Q})$.
		For each $K\in\mathcal{C}$, define $C(K):=df$ where $d$ is the (absolute) discriminant of the associated quadratic number field and $f$ is the product of totally ramified primes.
		For each $r\in\mathbb{Z}_{\geq0}$, we have
		\begin{equation*}
			\mathbb{P}_{\mathcal{C},C}(\operatorname{rk}_3\operatorname{Cl}_K\leq r)=0
			\quad\text{and}\quad
			\mathbb{E}_{\mathcal{C},C}(\lvert\operatorname{Hom}(\operatorname{Cl}_{K},\mathbb{Z}/3\mathbb{Z})\rvert)=+\infty.
		\end{equation*}
	\end{theorem}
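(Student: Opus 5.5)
The plan is to reduce to the quadratic resolvent and to count through class field theory, in the same spirit as Theorem~\ref{thm: solvable group with abelian normal subgroup S1}. Here $G=S_3$, $N=A_3\cong\mathbb{Z}/3\mathbb{Z}$ and $H\cong\mathbb{Z}/2\mathbb{Z}$. The weight giving $C(K)=df$ is $c(\text{transposition})=c(\text{3-cycle})=1$, at least at tame primes, so the minimum is attained outside $N$. That theorem therefore does not apply verbatim, and the extra input we need is Davenport--Heilbronn~\cite{Davenport1971Cubic}.

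\emph{Step 1: a lower bound for the 3-rank.} Let $K\in\mathcal{C}$ have Galois closure $L$ and quadratic resolvent $k$, so $L/k$ is cyclic cubic. A prime is totally ramified in $K$ exactly when it ramifies in $L/k$, so $\omega(f)$ counts these primes up to a bounded error at $3$. I would apply genus theory for the cyclic extension $L/k$ (Gerth-type formulas, or the ambiguous class number formula). This gives a lower bound for $\operatorname{rk}_3\operatorname{Cl}_L$ of the form $\omega(f)-c_0-\operatorname{rk}_3 E_k/E_k^3$, and the unit term is bounded by $2$. Next I decompose $\operatorname{Cl}_L\otimes\mathbb{Z}_3$ under $S_3$ using the idempotents of $\mathbb{Z}_3[S_3]$, which are available since $2$ is invertible in $\mathbb{Z}_3$. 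The standard relation (Gerth) is $\operatorname{rk}_3\operatorname{Cl}_L=2\operatorname{rk}_3\operatorname{Cl}_K-\operatorname{rk}_3\operatorname{Cl}_k$, up to bounded error. Combining these, I aim for
\begin{equation*}
\operatorname{rk}_3\operatorname{Cl}_K\;\geq\;\tfrac12\bigl(\omega(f)-\operatorname{rk}_3\operatorname{Cl}_k\bigr)-c_1
\end{equation*}
with an absolute constant $c_1$. Consequently $\operatorname{rk}_3\operatorname{Cl}_K\le r$ forces either $\omega(f)\le 2r+2c_1+R$ or $\operatorname{rk}_3\operatorname{Cl}_k> R$, for any auxiliary $R$.

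\emph{Step 2: counting.} Fields with a given resolvent $k$ and given $f$ correspond to pairs of cubic characters of a ray class group of $k$ of conductor dividing $f$, up to a bounded power of $3$. There are at most $\ll|\operatorname{Cl}_k[3]|\cdot 2^{\omega(f)}$ of them, with $f$ supported on primes not inert in $k$.

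For the upper bounds, first take the fields with $\operatorname{rk}_3\operatorname{Cl}_k>R$. The bound $3^{\operatorname{rk}}\mathbf{1}_{\operatorname{rk}>R}\le 3^{-R}|\operatorname{Cl}_k[3]|^2$ is useless here, since second moments are not known. Instead I would interchange the sums: sum over $f$ first, then over $d\le X/f$. For each $f$, Davenport--Heilbronn gives $\sum_{d\le Y}|\operatorname{Cl}_k[3]|\mathbf{1}_{\operatorname{rk}>R}\le 3^{-R}\sum_{d\le Y}|\operatorname{Cl}_k[3]|^{\,\cdot}$. I would try to organise this as a uniform statement saying that the proportion of $d$ with large 3-rank, weighted by $|\operatorname{Cl}_k[3]|$, is small uniformly in the splitting conditions imposed by $f$. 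This uniformity is the delicate point. Second, the fields with $\omega(f)\le R'$ contribute $\ll \sum_{d}|\operatorname{Cl}_k[3]|\,\#\{f\le X/d:\omega(f)\le R'\}$. By Landau's estimate for the count of $f$ and Davenport--Heilbronn for the sum over $d$, this is $\ll X(\log X)^{O(1)}(\log\log X)^{R'}/\log X$ times the main size.

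For the lower bound, fix each $k$ and count only $f$ composed of primes split in $k$. This gives $N_{\mathcal{C},C}(X)\gg\sum_{d}\sum_{f\le X/d}2^{\omega(f)}\cdot(\text{split density})$, which by a Selberg--Delange type argument is at least $X(\log X)^{\kappa}$ with the full exponent $\kappa$. The count with $\omega(f)$ bounded therefore loses a full power of $\log X$ up to $\log\log$ factors, so its proportion tends to $0$. Letting $R\to\infty$ then gives $\mathbb{P}_{\mathcal{C},C}(\operatorname{rk}_3\operatorname{Cl}_K\le r)=0$.

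\emph{Step 3: the moment.} We have $|\operatorname{Hom}(\operatorname{Cl}_K,\mathbb{Z}/3\mathbb{Z})|=3^{\operatorname{rk}_3\operatorname{Cl}_K}\ge 3^{r+1}\mathbf{1}_{\operatorname{rk}_3\operatorname{Cl}_K>r}$. By Step 2, the liminf of the averages is at least $3^{r+1}$ for every $r$, so the moment is $+\infty$.

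The main obstacle is the uniformity in Step 2. We must control the number of pairs $(d,f)$ where $\operatorname{rk}_3\operatorname{Cl}_k$ is large, while $f$ carries many split-prime conditions relative to $k$. Only the Davenport--Heilbronn first moment is available, and I would first try Davenport--Heilbronn with local conditions (finitely many at a time, via Taniguchi--Thorne/Bhargava--Shankar--Tsimerman style uniformity) together with a truncation in $\omega(f)$.
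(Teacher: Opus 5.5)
\paragraph{Verdict.} There is a genuine gap, and it is the one you flag yourself.

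\paragraph{Where the argument fails.} Your Step 1 bound $\operatorname{rk}_3\operatorname{Cl}_K\ge\tfrac12(\omega(f)-\operatorname{rk}_3\operatorname{Cl}_k)-c_1$ lets every field whose quadratic resolvent has large $3$-rank escape. So Step 2 must show that the fields with $\operatorname{rk}_3\operatorname{Cl}_k>R$ form a proportion $\eta(R)\to0$ of all fields. Up to logarithms there are about $\sum_{d}|\operatorname{Cl}_k[3]|\,\mathbf{1}_{\operatorname{rk}_3\operatorname{Cl}_k>R}\cdot X/d$ of them. That is a uniform-integrability statement for $3^{\operatorname{rk}_3\operatorname{Cl}_k}$. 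It does not follow from the Davenport--Heilbronn first moment, with or without finitely many local conditions: the first moment alone does not stop a fixed amount of mass from sitting at ever larger ranks, and no moment beyond the first is currently known. So the argument does not close.

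Step 1 is also unreliable as written. The relation between $\operatorname{rk}_3\operatorname{Cl}_L$, $\operatorname{rk}_3\operatorname{Cl}_K$ and $\operatorname{rk}_3\operatorname{Cl}_k$ is only asserted up to bounded error. Moreover, with the sign you quote, $r_L=2r_K-r_k+O(1)$, you would get $+\tfrac12 r_k$ rather than $-\tfrac12 r_k$ in your inequality. You would have to settle which form actually holds before using it.

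\paragraph{The missing idea.} Bound the $3$-rank of the cubic field directly. Roquette--Zassenhaus gives $\operatorname{rk}_3\operatorname{Cl}_K\ge\#\{p:\ p\text{ totally ramified in }K\}-O(1)$, with an absolute constant and no resolvent term. This is what the paper uses, with $-4$ after discarding $2$ and $3$. Then $\operatorname{rk}_3\operatorname{Cl}_K\le r$ forces $K$ into $\bigcup_{\gamma\le r+4}\mathcal{C}_{\Omega}^{\gamma}$, and only fixed $\gamma$ has to be handled.

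For fixed $\gamma$, the paper argues essentially as in your bounded-$\omega(f)$ count. Fix $k$ and bound the number of fields with given local data by $|\operatorname{Hom}(\operatorname{Cl}_k,\mathbb{Z}/3\mathbb{Z})|$ times the number of local choices, via class field theory. Then integrate against the Davenport--Heilbronn first moment by partial summation, which gives $N_{\mathcal{C}_{\Omega}^{\gamma},C}(X)\ll X(\log\log X)^{\gamma}$. This is compared with $N_{\mathcal{C},C}(X)\gg X\log X$, which the paper takes from Shankar--Thorne's count by the radical of the discriminant with local conditions at $2,3$.

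\paragraph{Other problems in Step 2.} Your own lower bound cannot replace that input. For given $(k,f)$, a tuple of local ramified characters need not come from a global character: there is an obstruction from $\operatorname{Cl}_k[3]$ and, for real $k$, from the fundamental unit. So $\gg 2^{\omega(f)}$ fields per pair $(k,f)$ is false in general.

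Also, $f$ is not supported on non-inert primes only. A prime $p\equiv 2\bmod 3$ that is inert in $k$ can be totally ramified in $K$, with decomposition group $S_3$, as in pure cubic fields. This does not hurt the upper bounds, but it changes the local densities you use.
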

	Compared to the Theorem~\ref{thm: solvable group with abelian normal subgroup S1}, the main difference is that the generalized discriminant is changed to the product of ramified primes.
	And this condition makes the problem more subtle, hence requiring more information from the moments of class groups and the field-counting.
	For the non-Galois cubic fields, we take the advantage of the famous result on the $\mathbb{Z}/3\mathbb{Z}$-moment of the class groups of the quadratic fields by Davenport and Heilbronn~\cite{Davenport1971Cubic}, as well as the result by Shankar and Thorne~\cite{shankar2024asymptoticscubicfieldsordered} of counting cubic fields ordered by the radical of the discriminant.
	See Section~\ref{section: product of ramified primes} for the proof.
	
	\section{Basic notations}\label{section:notation}
	In this section we introduce some of the notations that will be used in the paper.
	We use some standard notations coming from analytic number theory.
	For example, write a complex number as $s=\sigma+it$.
	Denote the Euler's phi function by $\phi(n)$.
	Let $\omega(n)$ count the number of distinct prime divisors of $n$ and so on.
	
	We also follow the notations of inequalities with unspecified constants from Iwaniec and Kowalski~\cite[Introduction, p.7]{iwaniec2004analytic}.
	Let us just write down the ones that are important for us.
	Let $X$ be some space (usually some region of $\mathbb{C}$ in our paper), and let $f,g$ be two complex functions defined on $X$.
	Then $f(x)\ll g(x)$ for $x\in X$ means that $\lvert f(x)\rvert\leq C\lvert g(x)\rvert$ for some constant $C\geq0$.
	Any value of $C$ for which this holds is called an implied constant.
	We use $f(x)\asymp g(x)$ for $x\in X$ if $f(x)\ll g(x)$ and $g(x)\ll f(x)$ both hold with possibly different implied constants.
	We say that $f=o(g)$ as $x\to x_0$ if for any $\epsilon>0$ there exists some (unspecified) neighbourhood $U_\epsilon$ of $x_0$ such that $\lvert f(x)\rvert\leq\epsilon\lvert g(x)\rvert$ for $x\in U_\epsilon$.
	Finally, $f\sim g$ as $x\to x_0$ if we can write $f=g+o(g)$.
	
	Throughout the paper, without further explanation, the notation $\operatorname{Hom}$ always refers to the continuous maps.
	For example, let $G$ be a finite  group.
	Then the notation $\operatorname{Hom}(G_{\mathbb{Q}},G)$ means the set of continuous homomorphism from the absolute Galois group $G_{\mathbb{Q}}$ of $\mathbb{Q}$ to a finite group $G$.
	Clearly the surjective ones up to conjugation correspond to the Galois $G$-fields up to isomorphism classes.
	
	\section{Local specifications}\label{sec: specifications}
	In this section, we establish the fundamental notions for our proof of the main results.
	Fix a base field $F$.
	Let us prove some properties of the Galois algebras.
	Note that some author may allow $0$ to be an idempotent.
	But we are in the context of the decomposition of primitive central idempotents.
	So, by an idempotent $e$ of a ring $R$, we mean that $0\neq e$ and $e^2=e$.
	\begin{lemma}\label{lemma: central idempotents}
		If $f:A\to B$ is an isomorphism of $F$-algebras, and $e$ is a primitive central idempotent of $A$, then $f(e)$ is a primitive central idempotent of $B$.
	\end{lemma}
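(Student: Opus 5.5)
Since $f$ is an isomorphism of $F$-algebras, it preserves every purely ring-theoretic property. The plan is to check the three properties in turn: $f(e)$ is a nonzero idempotent, it is central, and it is primitive among central idempotents. For the first, $f(e)^2=f(e^2)=f(e)$, and $f(e)\neq 0$ because $f$ is injective and $e\neq 0$. For centrality, take any $b\in B$ and write $b=f(a)$ using surjectivity. Then
\begin{equation*}
	f(e)b=f(e)f(a)=f(ea)=f(ae)=f(a)f(e)=bf(e),
\end{equation*}
so $f(e)$ lies in the center of $B$.

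For primitivity, I would argue by contradiction. Suppose $f(e)=e_1'+e_2'$ with $e_1',e_2'$ nonzero orthogonal central idempotents of $B$, meaning $e_1'e_2'=e_2'e_1'=0$. Set $e_i:=f^{-1}(e_i')$. The inverse $f^{-1}$ is also an isomorphism of $F$-algebras, so the previous step applied to $f^{-1}$ shows that each $e_i$ is a nonzero central idempotent of $A$. Moreover $e_1e_2=f^{-1}(e_1'e_2')=0$ and $e=e_1+e_2$. This decomposes $e$ into two nonzero orthogonal central idempotents, contradicting the primitivity of $e$. The same argument works if primitivity is phrased as ``the only central idempotents $e'$ with $e'e=e'$ are $e'=e$'': transport any such $e'$ for $f(e)$ back to $A$ via $f^{-1}$.

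No step here is a genuine obstacle. The only point needing care is to use the same notion of ``primitive'' as the paper, namely primitive among central idempotents, with $0$ excluded. The argument above is stated so that it works for either of the usual equivalent formulations of that notion.
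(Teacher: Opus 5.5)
Your proof is correct and follows essentially the same route as the paper: idempotence and centrality of $f(e)$ are transported through the homomorphism (using surjectivity for centrality), and primitivity follows by pulling a decomposition $f(e)=e_1'+e_2'$ back along $f^{-1}$ to obtain a decomposition of $e$. You are slightly more careful than the paper, since you use injectivity explicitly to get $f(e)\neq 0$ and you check that $f^{-1}(e_1')$, $f^{-1}(e_2')$ are nonzero orthogonal central idempotents, both of which the paper leaves implicit.
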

	\begin{proof}
		Let $g:A\to B$ be any morphism of $F$-algebras.
		If $e$ is an idempotent of $A$, then $g(e)^2=g(e^2)=g(e)$.
		In other words, if the image of an idempotent is nonzero, then it is an idempotent.
		In addition, if $g$ is surjective, and $e$ is central in $A$, then $g(e)$ has to be central in $B$.
		
		Now assume that $f(e)=e'_1+e'_2$ admits a decomposition into a sum of two central idempotents in $B$.
		Note that $f^{-1}:B\to A$ is a surjective morphism of $F$-algebras.
		We have $e=f^{-1}(f(e))=f^{-1}(e_1')+f^{-1}(e_2')$.
		So, the preimage of a non-primitive idempotent is also non-primitive.
		And we are done for the proof.
	\end{proof}
	Note that when $f$ is not an isomorphism, the statement is clearly false, even if $A$ and $B$ are commutative $F$-algebras.
	Because $e$ may be contained in the kernel of $f$.
	\begin{lemma}\label{lemma: transitivity on idempotents}
		Let $(A/F,\varphi_A)$ be a Galois $(G,F)$-algebra.
		The induced action of $G$ on the set of all primitive idempotents of $A$ is transitive.
		That is, for each pair $(e_1,e_2)$ of primitive idempotents, there exists some $g\in G$ such that
		\begin{equation*}
			e_2=\varphi_A(g)(e_1).
		\end{equation*}
	\end{lemma}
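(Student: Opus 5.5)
The plan is to use the structure of a finite étale algebra over $F$ together with Galois descent. Since $A$ is étale over $F$ of degree $\lvert G\rvert$, it is a finite product of separable field extensions of $F$:
\begin{equation*}
A=\prod_{i=1}^{t}e_iA,
\end{equation*}
where $e_1,\dots,e_t$ are exactly the primitive idempotents of $A$. Every idempotent of $A$ is a sum $\sum_{i\in I}e_i$ over some nonempty subset $I\subseteq\{1,\dots,t\}$. By Lemma~\ref{lemma: central idempotents}, applied to the automorphisms $\varphi_A(g)$, each $\varphi_A(g)$ permutes the set $\{e_1,\dots,e_t\}$. So $G$ acts on this finite set, and it suffices to show that the action has a single orbit.

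Argue by contradiction. Let $O\subseteq\{e_1,\dots,e_t\}$ be the orbit of $e_1$, and set
\begin{equation*}
f:=\sum_{e\in O}e.
\end{equation*}
Since $G$ permutes the elements of $O$, we get $\varphi_A(g)(f)=f$ for all $g\in G$, so $f\in A^G=F$. An element of the field $F$ that is idempotent and nonzero must equal $1$. On the other hand, $1=\sum_{i=1}^{t}e_i$, and the $e_i$ are linearly independent over $F$ because they are orthogonal and nonzero. If $O$ were a proper subset, then $f=\sum_{e\in O}e\neq\sum_{i}e_i=1$, a contradiction. Hence $O$ is the whole set, which gives transitivity.

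The only point needing care is the identification of the primitive idempotents of an étale algebra with the factor units $e_i$, and the fact that $F\hookrightarrow A$ sends $1$ to $\sum_i e_i$. Both are standard consequences of $A\cong\prod_i L_i$ with each $L_i$ a field, since a field has only the idempotent $1$. I expect no real obstacle here. In particular, the hypothesis $\operatorname{char}(F)\nmid\lvert G\rvert$ is not needed for this step; only $A^G=F$ is used.
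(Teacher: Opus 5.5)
Your proposal is correct and follows essentially the same route as the paper. Both arguments sum the $G$-orbit of one primitive idempotent, note that this sum is a nonzero $G$-fixed idempotent and hence equals $1\in A^G=F$, and then use $1=\sum_i e_i$ over all primitive idempotents (with orthogonality/linear independence) to conclude that the orbit contains every primitive idempotent.
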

	\begin{proof}
		Let $e_1$ be a primitive idempotent of $A$, and $\{e_1,\dots,e_l\}$ be the set of primitive idempotents generated by the action of $G$.
		That is, $\{e_1,\dots,e_l\}=\{\varphi_A(g)(e_1)\mid g\in G\}$.
		Note that by the Lemma~\ref{lemma: central idempotents}, each $\varphi_A(g)(e_1)$ must be a primitive central idempotent of $A$.
		Now define an element $e$ by
		\begin{equation*}
			e:=\sum_{i=1}^l e_i.
		\end{equation*}
		It is central because it is the sum of central elements.
		It is still an idempotent because $e_i^2=e_i$ and $e_ie_j=0$ for all $1\leq i\neq j\leq l$.
		Moreover, it is fixed by the action of $G$, that is, $\varphi_A(g)(e)=e$ for all $g\in G$.
		So, $e\in A^G=F$.
		But $F$ is a field, the only (central) idempotent is the identity $1_F=1_A$.
		Recall that $A\cong M_1\times\cdots\times M_n$, as an {\'e}tale $F$-algebra.
		This implies that $A$ is a semisimple $F$-algebra (Artinian with trivial Jacobson radical), and $1=e_1+\cdots+e_l$ is the decomposition of $1$ into primitive idempotents in $A$.
		So, the action of $G$ on $e_1$ generates all the primitive idempotents.
		And we are done.
	\end{proof}
	Let us first prove Theorem~\ref{thm: structure of Galois algebras} in Section~\ref{section: intro}.
	\begin{proof}[Proof of Theorem~\ref{thm: structure of Galois algebras}]
		Let $A$ be a Galois $(G,F)$-algebra.
		We first explain the structure of $A$ as an $F$-algebra.
		It is a finite dimensional {\'e}tale algebra by definition, so it must be of the form
		\begin{equation*}
			A\cong\prod_{i=1}^l M_i
		\end{equation*}
		where $M_i/F$ is a (finite) separable extension, for each $1\leq i\leq l$.
		By the Lemma~\ref{lemma: transitivity on idempotents}, we know that the action of $G$ is transitive on the idempotents, hence $M_i$-s are all isomorphic as fields.
		Let $n:=\lvert G\rvert=[A:F]$, and $m:=[M_i:F]$, then we have
		\begin{equation*}
			n=ml.
		\end{equation*}
		Fix a primitive idempotent $e$ of $A$ and let $G_e:=\operatorname{Stab}_G(e)$.
		We have $l=\lvert G/G_e\rvert$ and $m=\lvert G_e\rvert$.		
		Clearly for each $g\in G_e$, we have $g(eA)=eA$, so there is a group homomorphism $\varphi_{eA}:G_e\to\operatorname{Aut}_F(eA)$ defined by $\varphi_{eA}(g):=\varphi_A(g)\vert_{eA}$.
		
		Consider the morphism of $F$-algebras $\pi_e:A\to eA$ defined by $x\mapsto e\cdot x$.
		Clearly $\pi_e$ is surjective.
		Let $ex\in(eA)^{G_e}$.
		Define
		\begin{equation*}
			x':=\frac{1}{\lvert G_e\rvert}\sum_{g\in G}\varphi_A(g)(ex)\in A.
		\end{equation*}
		By the Lemma~\ref{lemma: central idempotents} above, the element $\varphi_A(g)(e)$ is some primitive idempotent.
		And $\varphi_A(g)(e)=e$ if and only if $g\in G_e$.
		Therefore $ex'=ex$, that is, $x'$ is a preimage of $ex$ under $\pi_e$.
		By the construction of $x'$, it is fixed by the action of $G$.
		So $x'\in A^G=F$, which implies that $ex\in\pi(F)$.
		Conversely, it is clear that $\pi(F)\subseteq(eA)^{G_e}$.
		This shows that $(eA)^{G_e}=\pi(F)\cong F$, which already implies that $eA/F$ is Galois, for the subgroup $\varphi_{eA}(G_e)$ of $\operatorname{Aut}_F(eA)$ fixes $F$.
		And this also implies that $\varphi_{eA}:G_e\to\operatorname{Gal}(eA/F)$ is surjective.		
		Recall that $m=\lvert G_e\rvert$, so the map $\varphi_{eA}$ has to be injective.
		Therefore, the algebra $(eA/F,\varphi_{eA})$ is a Galois $G_e$-field extension.
		
		We define an $F$-morphism $\tilde{f}:\operatorname{Ind}^G_{G_e}eA\to A$ by the universal property of the functor $\operatorname{Ind}^G_{G_e}$.
		To be precise, the identity map $1_{eA}:eA\to eA$ is an isomorphism of Galois $G_{e}$-extensions over $F$, hence also $G_{e}$-equivariant.
		Let us define $f:eA\to A$ by composing $1_{eA}$ and $eA\hookrightarrow A$.
		Define $i:eA\to\operatorname{Ind}^G_{G_e}eA$ by $ex\mapsto\tau_{ex}$, where $\tau_{ex}\in\operatorname{Map}(G,A)$ is the map defined by
		\begin{equation*}
			\tau_{ex}(g)=\left\{\begin{aligned}
				& gex & \text{ if }g\in G_{e} \\
				& 0 & \text{ else if }g\notin G_{e}.
			\end{aligned}\right.
		\end{equation*}
		It is clear that $\tau_{ex}\in\operatorname{Ind}^{G}_{G_{e}}eA$, and $i$ is a morphism of Galois $(H,F)$-algebras.
		By the universal property of $\operatorname{Ind}^G_{G_e}$, there is a unique $G$-morphism $\tilde{f}:\operatorname{Ind}^G_{G_e}eA\to A$ such that $\tilde{f}\circ i=f$.
		In particular, for each simple tensor $g\otimes m$, we have $\tilde{f}(g\otimes m)=\varphi_A(g)\cdot m$.
		The map $\tilde{f}$ is automatically $G$-equivariant by its definition.
		And it is surjective, for $G\cdot eA=A$ (transitivity on idempotents).
		The injectivity follows from the comparison of dimensions:
		The algebras $A$ and $\operatorname{Ind}^G_{G_e}eA$ both have dimension $\lvert G\rvert$ over $F$, so $\tilde{f}$ has to be injective if it is surjective.
		So $\tilde{f}$ gives an isomorphism between $\operatorname{Ind}^G_{G_e}eA$ and $A$ as Galois $(G,F)$-algebras.
		And we are done for the proof.
	\end{proof}	
	In most cases, the Galois group is fixed, just like our set-up at the beginning of this section: we fix a finite group $G$.
	However in order to describe the isomorphism of Galois algebras, we are faced with the change of groups.
	Namely, all the components $eA$ of $A$ are isomorphic as fields, and they have isomorphic Galois groups.
	But their Galois groups are represented by subgroups conjugate in $G$.
	Also, it is not enough for us to know that they are isomorphic as fields.
	We need to show that they are ``conjugate by $G$ in $A$'' in a strict sense.
	So, let us generalize the Definition~\ref{def: Galois algebras} a little bit as follows.
	\begin{definition}\label{def: category of Galois algebras}
		Define the category of Galois $F$-algebras as follows.
		The set of objects consists of pairs $(A,G)$ where $G$ is a finite group and $A$ is a Galois $(G,F)$-algebra.
		A morphism between two Galois $F$-algebras $(A,G)$ and $(B,H)$ is a pair $(f,\tau)$ where $f:A\to B$ is a morphism of $F$-algebras and $\tau:G\to H$ is a group homomorphism such that the following diagram is commutative
		\begin{equation*}
			\begin{tikzcd}
				A\arrow[d,"f"]\arrow[r,"\varphi_A(g)"]&A\arrow[d,"f"]\\
				B\arrow[r,"\varphi_B(\tau(g))"]&B
			\end{tikzcd}
		\end{equation*}
		for all $g\in G$.
		That is, for all $g\in G$ and $a\in A$, we have
		\begin{equation*}
			f(g\cdot a)=\tau(g)\cdot f(a).
		\end{equation*}
	\end{definition}
	Clearly, when restricted to Galois extensions over $F$, say $K_1/F$ and $K_2/F$, if they are isomorphic as field extensions over $F$, then there exists isomorphisms of the number equal to $[K_1:F]$ in the category of Galois $F$-algebras.
	Otherwise, there is no morphism between them.
	This shows that this category is a generalization of the notion of category of finite dimensional Galois extensions over $F$.
	
	For the other direction, the category of Galois $(G,F)$-algebras is the subcategory where the group $G$ is fixed and the morphism of groups is taken to be the identity $1_G$ of $G$.
	For example, isomorphic Galois extensions and isomorphic Galois $G$-extensions are different in general.
	Using this notation, we could describe the isomorphisms between the components of a Galois $(G,F)$-algebra in a more detailed way.
	\begin{lemma}\label{lemma: isomorphisms of G-algebras}
		Let $(A,\varphi_A)$be a Galois $(G,F)$-algebras.
		For each pair of primitive idempotents $(e,e')$, and for each $g\in G$ such that $e'=\varphi_A(g)(e)$, we have an isomorphism of Galois $F$-algebras $(\varphi_A(g),c_g):(eA,G_e)\to(e'A,G_{e'})$, where $G_e$ and $G_{e'}$ are the corresponding stabilizers, and $\varphi_A(g):eA\to e'A,ex\mapsto\varphi_A(g)(ex)$ is the field isomorphism induced by $g$, and $c_g:G_e\to G_{e'},h\mapsto ghg^{-1}$ is the conjugation by $g$.
	\end{lemma}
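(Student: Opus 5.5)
We need to check three things about the pair $(\varphi_A(g),c_g)$: both components are well defined, it is a morphism in the category of Definition~\ref{def: category of Galois algebras}, and it has an inverse there. The candidate inverse is $(\varphi_A(g^{-1}),c_{g^{-1}})$.

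\emph{Well-definedness.} By Lemma~\ref{lemma: central idempotents}, $\varphi_A(g)$ is an $F$-algebra automorphism of $A$ sending $e$ to $e'$. For $x\in A$,
\begin{equation*}
	\varphi_A(g)(ex)=\varphi_A(g)(e)\,\varphi_A(g)(x)=e'\varphi_A(g)(x)\in e'A .
\end{equation*}
So $\varphi_A(g)$ restricts to an $F$-algebra map $eA\to e'A$, and it sends the identity $e$ of $eA$ to the identity $e'$ of $e'A$. The same argument with $g^{-1}$ (note $\varphi_A(g^{-1})(e')=e$) gives a map $e'A\to eA$. Since $\varphi_A$ is a homomorphism, $\varphi_A(g^{-1})=\varphi_A(g)^{-1}$, so the two restrictions are mutually inverse field isomorphisms.

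For the group part, take $h\in G_e$. Then
\begin{equation*}
	\varphi_A(ghg^{-1})(e')=\varphi_A(g)\varphi_A(h)(e)=\varphi_A(g)(e)=e'.
\end{equation*}
Hence $c_g(G_e)\subseteq G_{e'}$, and symmetrically $c_{g^{-1}}(G_{e'})\subseteq G_e$. Thus $c_g:G_e\to G_{e'}$ is a group isomorphism with inverse $c_{g^{-1}}$.

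\emph{Compatibility.} The actions on the components are the restricted ones $\varphi_{eA}(h)=\varphi_A(h)\vert_{eA}$ from Theorem~\ref{thm: structure of Galois algebras}. For $h\in G_e$ and $ex\in eA$,
\begin{equation*}
	\varphi_A(g)\bigl(\varphi_A(h)(ex)\bigr)=\varphi_A(ghg^{-1})\bigl(\varphi_A(g)(ex)\bigr)=\varphi_{e'A}(c_g(h))\bigl(\varphi_A(g)(ex)\bigr).
\end{equation*}
This is exactly the commutative square required in Definition~\ref{def: category of Galois algebras}. The same computation with $g^{-1}$ shows $(\varphi_A(g^{-1}),c_{g^{-1}})$ is also a morphism.

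\emph{Inverse.} Composition of morphisms is componentwise. Since $\varphi_A(g^{-1})\circ\varphi_A(g)=\mathrm{id}$ and $c_{g^{-1}}\circ c_g=\mathrm{id}$, both composites of the two pairs are the identity morphisms, so $(\varphi_A(g),c_g)$ is an isomorphism.

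No step here is a real obstacle; the argument is pure bookkeeping. The only point to watch is that the $G_e$-action on $eA$ is the restriction of $\varphi_A$, so that the commutative square reduces to the identity $\varphi_A(g)\varphi_A(h)=\varphi_A(ghg^{-1})\varphi_A(g)$.
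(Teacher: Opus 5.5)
Your proposal is correct and is essentially the paper's proof: the paper also notes that $G_{e'}=gG_eg^{-1}$, restricts the automorphism $\varphi_A(g)$ to a map $eA\to e'A$, and checks compatibility via $\varphi_A(g\sigma)=\varphi_A(g\sigma g^{-1})\varphi_A(g)$. You go slightly further by exhibiting $(\varphi_A(g^{-1}),c_{g^{-1}})$ explicitly as the inverse morphism, which the paper leaves implicit.
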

	\begin{proof}
		First of all, by the Lemma~\ref{lemma: transitivity on idempotents}, we know that there exists some $g\in G$ such that $\varphi_A(g)(e)=e'$.
		If $G_e$ is the stabilizer of $e$ in $G$, then clearly the conjugate $G_{e'}=gG_eg^{-1}$ is the stabilizer of $e'$.
		So the maps in the statement are well-defined.
		
		It suffices to verify that $(\varphi_A(g),c_g)$ is really an isomorphism of the Galois $F$-algebras.
		Since $\varphi_A(g)$ is an element of $\operatorname{Aut}_F(A)$, it is clear that $\varphi_A(g)$ restricted to $eA$ induces an isomorphism of field extensions $eA\cong e'A$ over $F$.
		It is also clear that $c_g:G_e\to G_{e'}$ is a group isomorphism.
		Now let $\sigma$ be an element of $G_e$, and $x\in eA$.
		We have
		\begin{equation*}
			\begin{aligned}
				\varphi_A(g)\bigl(\varphi_{eA}(\sigma)(x)\bigr)=&\varphi_A(g\sigma)(x)
				\\
				=&\varphi_A(g\sigma g^{-1}g)(x)
				\\
				=&\varphi_{e'A}\bigl(c_g(\sigma)\bigr)\bigl(\varphi_A(g)(x)\bigr).
			\end{aligned}
		\end{equation*}
		In short, we've shown that
		\begin{equation*}
			\varphi_A(g)(\sigma\cdot x)=c_g(\sigma)\cdot\varphi_A(x).
		\end{equation*}
		This shows that $\varphi_A(g)$ and $c_g$ are compatible.
		So the pair $(\varphi_A(g),c_g)$ is indeed an isomorphism of the Galois $F$-algebras, and we are done.
	\end{proof}
	Now we could prove a criterion for two Galois $(G,F)$-algebras being isomorphic.
	\begin{proposition}\label{prop: isomorphisms of G-algebras}
		Let $(A,\varphi_A)$ and $(B,\varphi_B)$ be two Galois $(G,F)$-algebras.
		They are isomorphic as Galois $(G,F)$-algebras if and only if there exists primitive idempotents $e\in A$ and $e'\in B$ such that the Galois $F$-algebras $(eA,G_e)$ and $(e'B,G_{e'})$ are isomorphic by a conjugation from $G$.
		That is, there exists some $g\in G$ such that the pair $(f:eA\to e'B,c_g:G_e\to G_{e'})$ is an isomorphism, where $c_g(h)=ghg^{-1}$ is the conjugation by $g$.
	\end{proposition}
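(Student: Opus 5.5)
The proof splits into the two implications, and both rest on the structure theorem (Theorem~\ref{thm: structure of Galois algebras}) together with Lemmas~\ref{lemma: central idempotents}, \ref{lemma: transitivity on idempotents} and \ref{lemma: isomorphisms of G-algebras}. Throughout I will read the condition in the statement as saying that $f$ is an isomorphism of fields over $F$ with $f(h\cdot x)=(ghg^{-1})\cdot f(x)$ for all $h\in G_e$ and $x\in eA$.

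\emph{Necessity.} Suppose $\Phi:A\to B$ is an isomorphism of Galois $(G,F)$-algebras. Fix a primitive idempotent $e\in A$. By Lemma~\ref{lemma: central idempotents}, $e':=\Phi(e)$ is a primitive idempotent of $B$. Since $\Phi$ is $G$-equivariant, $\varphi_B(h)(e')=\Phi(\varphi_A(h)(e))$, so $G_{e'}=G_e$. The restriction $\Phi|_{eA}:eA\to e'B$ is then an $F$-isomorphism satisfying $\Phi(h\cdot x)=h\cdot\Phi(x)$. Hence the pair $(\Phi|_{eA},c_{1})$ is the required isomorphism, with $g=1$.

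\emph{Sufficiency.} Suppose $(f,c_g):(eA,G_e)\to(e'B,G_{e'})$ is an isomorphism with $G_{e'}=gG_eg^{-1}$. I first reduce to the case $g=1$. By Lemma~\ref{lemma: transitivity on idempotents}, $e'':=\varphi_B(g^{-1})(e')$ is a primitive idempotent of $B$. Lemma~\ref{lemma: isomorphisms of G-algebras} gives the isomorphism $(\varphi_B(g^{-1}),c_{g^{-1}}):(e'B,G_{e'})\to(e''B,G_{e''})$. Composing, $(\varphi_B(g^{-1})\circ f,\,c_{g^{-1}}\circ c_g)=(f',1)$ is an isomorphism $(eA,G_e)\to(e''B,G_{e''})$ with $G_{e''}=G_e$. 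So $f'$ is an isomorphism of Galois $G_e$-extensions over $F$.

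Next I apply $\operatorname{Ind}^G_{G_e}$. Since $\operatorname{Ind}^G_{G_e}$ is a functor, it turns $f'$ into an isomorphism of Galois $(G,F)$-algebras
\begin{equation*}
\operatorname{Ind}^G_{G_e}eA\cong\operatorname{Ind}^G_{G_e}e''B.
\end{equation*}
Theorem~\ref{thm: structure of Galois algebras} supplies $A\cong\operatorname{Ind}^G_{G_e}eA$ and $B\cong\operatorname{Ind}^G_{G_{e''}}e''B$ as Galois $(G,F)$-algebras. Chaining these three isomorphisms gives $A\cong B$.

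The main obstacle is the bookkeeping in this last step. Both the identification $A\cong\operatorname{Ind}^G_{G_e}eA$ and the functor $\operatorname{Ind}^G_{G_e}$ must be taken relative to the same subgroup $G_e$, which is why the reduction to $g=1$ comes first. One also has to check that composing morphisms in the category of Definition~\ref{def: category of Galois algebras} composes the group maps, so that $c_{g^{-1}}\circ c_g=1$. If the functoriality is suspect, a fallback is to build the map $A\to B$ directly. Using the decomposition $A=\bigoplus_{sG_e\in G/G_e}\varphi_A(s)(eA)$, define it on the summand $\varphi_A(s)(eA)$ by
\begin{equation*}
\varphi_A(s)(x)\mapsto\varphi_B(s)\bigl(f'(x)\bigr).
\end{equation*}
This is well defined because $f'$ is $G_e$-equivariant, and it is then clearly $G$-equivariant and bijective.
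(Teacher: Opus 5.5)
Your proposal is correct and follows the paper's proof essentially verbatim: necessity by restricting $\Phi$ to $eA$ with $g=1$, and sufficiency by composing with $(\varphi_B(g^{-1}),c_{g^{-1}})$ from Lemma~\ref{lemma: isomorphisms of G-algebras} to reduce to $g=1$, then applying the functor $\operatorname{Ind}^G_{G_e}$ and Theorem~\ref{thm: structure of Galois algebras}. One minor citation point: the fact that $\varphi_B(g^{-1})(e')$ is a primitive idempotent comes from Lemma~\ref{lemma: central idempotents}, not from the transitivity lemma.
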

	\begin{proof}
		First assume that $(A,\varphi_A)\cong(B,\varphi_B)$ as Galois $(G,F)$-algebras.
		By definition, there exists a morphism $f:A\to B$ of $F$-algebras, such that $f$ is bijective and $G$-equivariant.
		Let $e$ be any primitive idempotent of $A$.
		By the Lemma~\ref{lemma: central idempotents}, we know that $e':=f(e)$ is a primitive idempotent of $B$.
		Since $f$ is $G$-equivariant, we see that $H:=G_e=G_{e'}$.
		Then we see that $(f\vert_{eA}:eA\to e'B,1_{H}:H\to H)$ is automatically an isomorphism of the Galois $F$-algebras.
		
		For the opposite direction, let $(eA,G_e)$ and $(e'B,G_{e'})$ and $(f:eA\to e'B,c_g:G_e\to G_{e'})$ be as in the statement.
		By the Lemma~\ref{lemma: isomorphisms of G-algebras}, we see that
		\begin{equation*}
			(\varphi_B(g^{-1}),c_{g^{-1}}):(e'B,G_{e'})\to((g^{-1}e')B,G_{g^{-1}e'})
		\end{equation*} 
		is an isomorphism of Galois $F$-algebras.
		Note also that $H:=G_{g^{-1}e}=G_e$.
		This implies that $(\varphi_B(g^{-1})\circ f,1_{H}):(eA,H)\to(g^{-1}e'B,H)$ is an isomorphism of the Galois $F$-algebras.
		Since the morphism of groups is just the identity map of $H$, it reduces to an isomorphism of Galois $(H,F)$-field extensions: 
		\begin{equation*}
			\varphi_B(g^{-1})\circ f:(eA,\varphi_{eA})\to(g^{-1}e'B,\varphi_{g^{-1}e'B}).
		\end{equation*}
		In particular, the map $f':=\varphi_B(g^{-1})\circ f$ is $H$-equivariant.
		It induces an isomorphism
		\begin{equation*}
			\operatorname{Ind}^G_H f':\operatorname{Ind}^G_H eA\to\operatorname{Ind}^G_H g^{-1}e'B
		\end{equation*}
		of Galois $(G,F)$-algebras.
		Because $\operatorname{Ind}^G_H$ is a functor, and the resulting map $\operatorname{Ind}^G_H f'$ is a morphism of $F$-algebras that is $G$-equivariant with an inverse $\operatorname{Ind}^G_H (f')^{-1}$.
		In the Theorem~\ref{thm: structure of Galois algebras} we have shown that $\operatorname{Ind}^G_H eA\cong A$ as Galois $(G,F)$-algebras, and similarly $\operatorname{Ind}^G_H g^{-1}e'B\cong B$.
		Therefore, $A$ and $B$ are isomorphic as Galois $(G,F)$-algebras.
	\end{proof}
	\begin{remark}
		The proof shows that if $(A,\varphi_A)\cong(B,\varphi_B)$ are isomorphic as Galois $(G,F)$-algebras, then there must be idempotents $e$ and $e'$ of $A$ and $B$ respectively such that $G_e=G_{e'}$ and $(eA,\varphi_A)\cong(e'B,\varphi_B)$ as Galois $G_e$-extensions over $F$.
	\end{remark}
	Roughly speaking, the Galois $(G,F)$-algebras are determined by the conjugacy classes (in $G$) of its components.
	An immediate application of the proposition is when the algebra is a Galois $G$-field extension.
	We obtain the following description to classify different actions of $G$ on the field extensions.
	\begin{corollary}\label{cor: Galois G-extensions}
		Let $K/F$ be a fixed Galois field extension such that $\operatorname{Gal}(K/F)\cong G$.
		Given two group isomorphisms $\varphi_1,\varphi_2:G\to\operatorname{Gal}(K/F)$, the two Galois algebras $(K,\varphi_1)$ and $(K,\varphi_2)$ are isomorphic if and only if there exists some $g\in G$ such that
		\begin{equation*}
			\varphi_2=\varphi_1\circ c_g,
		\end{equation*}
		where $c_g(h)=ghg^{-1}$ is the conjugation by $g$.
	\end{corollary}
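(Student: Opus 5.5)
We deduce Corollary~\ref{cor: Galois G-extensions} from Proposition~\ref{prop: isomorphisms of G-algebras}, using that a field has exactly one primitive idempotent. Since $K$ is a field, the only primitive idempotent of $K$ is $e=1$. Its stabilizer is all of $G$ under either action $\varphi_1$ or $\varphi_2$, and $eK=K$. So the data $(eA,G_e)$ and $(e'B,G_{e'})$ in Proposition~\ref{prop: isomorphisms of G-algebras} reduce to the Galois $F$-algebras $(K,G)$ with actions $\varphi_1$ and $\varphi_2$. The proposition then says that $(K,\varphi_1)\cong(K,\varphi_2)$ as Galois $(G,F)$-algebras if and only if, for some $g\in G$, there is an $F$-isomorphism $f:K\to K$ with
\begin{equation*}
	f\circ\varphi_1(h)=\varphi_2(ghg^{-1})\circ f\quad\text{for all }h\in G.
\end{equation*}
It remains to turn this condition into the stated identity $\varphi_2=\varphi_1\circ c_g$.

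\emph{Sufficiency.} Suppose $\varphi_2=\varphi_1\circ c_g$. Take $f:=\varphi_1(g)^{-1}=\varphi_1(g^{-1})\in\operatorname{Gal}(K/F)$. Then
\begin{equation*}
	f\circ\varphi_1(h)=\varphi_1(g^{-1}h)=\varphi_1(g^{-1}hg)\circ\varphi_1(g^{-1})=\varphi_1(g^{-1}hg)\circ f .
\end{equation*}
Hence $f$ is a $G$-equivariant isomorphism from $(K,\varphi_1)$ to $(K,\varphi_1\circ c_{g^{-1}})$. Since $c_{g^{-1}}$ and $c_g$ differ only by inverting $g$, and $g$ ranges over all of $G$, this gives the required isomorphism after renaming $g\leftrightarrow g^{-1}$. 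Equivalently, one can check directly that $f=\varphi_1(g)$ intertwines $\varphi_1$ with $\varphi_1\circ c_{g^{-1}}$. I will fix the sign convention consistently when writing this out.

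\emph{Necessity.} Suppose $f:(K,\varphi_1)\to(K,\varphi_2)$ is an isomorphism of Galois $(G,F)$-algebras, so $f\circ\varphi_1(h)=\varphi_2(h)\circ f$ for all $h\in G$. Because $K/F$ is Galois and $f$ is an $F$-automorphism of $K$, we have $f\in\operatorname{Gal}(K/F)$. Since $\varphi_1$ is an isomorphism onto $\operatorname{Gal}(K/F)$, we can write $f=\varphi_1(g_0)$ for some $g_0\in G$. Substituting gives
\begin{equation*}
	\varphi_2(h)=\varphi_1(g_0)\varphi_1(h)\varphi_1(g_0)^{-1}=\varphi_1(g_0hg_0^{-1}),
\end{equation*}
that is, $\varphi_2=\varphi_1\circ c_{g_0}$.

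The only real point is the observation that any field isomorphism $K\to K$ over $F$ lies in the Galois group and therefore comes from $G$ via $\varphi_1$. Apart from that, the main care needed is bookkeeping of $g$ versus $g^{-1}$ in the conjugation convention, which is harmless because $g$ is quantified over all of $G$.
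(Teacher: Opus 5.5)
Your proof is correct and is essentially the paper's argument: in both directions the isomorphism is an element of $\operatorname{Gal}(K/F)$ written as $\varphi_1(g)$, and the opening appeal to Proposition~\ref{prop: isomorphisms of G-algebras} is not actually needed. For sufficiency, simply take $f=\varphi_1(g)$, since $\varphi_1(g)\circ\varphi_1(h)=\varphi_1(ghg^{-1})\circ\varphi_1(g)=\varphi_2(h)\circ\varphi_1(g)$; note that, as a map out of $(K,\varphi_1)$, $\varphi_1(g)$ is equivariant into $(K,\varphi_1\circ c_g)$, not into $(K,\varphi_1\circ c_{g^{-1}})$ as your ``equivalently'' sentence says.
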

	\begin{proof}
		Given $g\in G$ such that $\varphi_2=\varphi_1\circ c_g$.
		Let $\sigma:=\varphi_1^{-1}(g)$ and one can check that $\sigma:(K,\varphi_1)\to(K,\varphi_2)$ is an isomorphism of Galois algebras.
		Conversely, if $\sigma\in\operatorname{Gal}(K/F)$ is an isomorphism of Galois algebras, then let $g:=\varphi_1(\sigma)$ and one can check that $\varphi_2=\varphi_1\circ c_g$.
	\end{proof}
	When $G$ is finite abelian, this result also explains the difference between the set of isomorphism classes of abelian $G$-fields and the set of isomorphism classes of abelian fields whose Galois group is isomorphic to $G$.
	The latter corresponds to surjective continuous homomorphisms $\operatorname{C}_{\mathbb{Q}}\to G$ where $\operatorname{C}_{\mathbb{Q}}$ is the id{\`e}les class group of $\mathbb{Q}$.
	While the former corresponds to the open normal subgroups $N$ of $\operatorname{C}_{\mathbb{Q}}$ such that $\operatorname{C}_{\mathbb{Q}}/N\cong G$.
	Generally speaking, the continuous homomorphisms are easier to manipulate.
	Also, the following statement explains why we prefer the notion of Galois algebra from another aspect.
	That is, we could establish a one-to-one correspondence between the isomorphism classes of the Galois $(G,F)$-algebras and the conjugacy classes of continuous homomorphisms $G_{F}\to G$.
	\begin{proposition}\label{prop: 1-1 correspondence for G-structured algebras}
		Fix a finite group $G$ and a field $F$.
		There is a one-to-one correspondence between the following two sets
		\begin{equation*}
			\{\text{Galois }(G,F)\text{-algebras up to isomorphism}\}\leftrightarrow\operatorname{Hom}(G_F,G)/\sim
		\end{equation*}
		where $G_F$ is the absolute Galois group of $F$, and $\sim$ means the equivalence relation induced by the conjugate of $G$, that is, $\rho,\chi:G_F\to G$ are equivalent if there is some $g\in G$ such that $\chi=g\rho g^{-1}$.
	\end{proposition}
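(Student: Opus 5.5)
I will construct the two maps explicitly and then check that they are mutually inverse, using Theorem~\ref{thm: structure of Galois algebras} and Proposition~\ref{prop: isomorphisms of G-algebras} as the main tools. Throughout I fix a separable closure $\bar F$, so $G_F=\operatorname{Gal}(\bar F/F)$.

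\emph{From homomorphisms to algebras.} Given a continuous $\rho:G_F\to G$, let $H:=\rho(G_F)$ and $K:=\bar F^{\ker\rho}$. Since $\ker\rho$ is open and normal, $K/F$ is a finite Galois extension, and $\rho$ induces an isomorphism $\bar\rho:\operatorname{Gal}(K/F)\to H$. Then $(K,\bar\rho^{-1})$ is a Galois $H$-field extension, and I set
\begin{equation*}
A_\rho:=\operatorname{Ind}^G_H K,
\end{equation*}
which has dimension $[G:H]\lvert H\rvert=\lvert G\rvert$ and is étale. One has $A_\rho^G\cong K^H=F$: a $G$-invariant $\tau$ is determined by $\tau(1)\in K^H$. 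So $A_\rho$ is a Galois $(G,F)$-algebra, and faithfulness of the $G$-action follows from the dimension count.

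For $\chi=g\rho g^{-1}$, the field $K$ is the same, the image is $gHg^{-1}$, and the induced identification differs by $c_g$. Proposition~\ref{prop: isomorphisms of G-algebras} then gives $A_\chi\cong A_\rho$. Hence $\rho\mapsto A_\rho$ descends to $\operatorname{Hom}(G_F,G)/\sim$.

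\emph{From algebras to homomorphisms.} Given $A$, use Theorem~\ref{thm: structure of Galois algebras} to pick a primitive idempotent $e$. The component $eA$ is a Galois $G_e$-field with isomorphism $\varphi_{eA}$. Choose an $F$-embedding $\iota:eA\hookrightarrow\bar F$ and define
\begin{equation*}
\rho_A:\ G_F\twoheadrightarrow\operatorname{Gal}(\iota(eA)/F)\xrightarrow{\ \varphi_{eA}^{-1}\circ\iota^*\ }G_e\hookrightarrow G.
\end{equation*}
There are two choices here: the embedding $\iota$ and the idempotent $e$.
\begin{itemize}
\item Changing $\iota$ composes it with some $\sigma\in\operatorname{Gal}(eA/F)$, which conjugates $\rho_A$ by $\varphi_{eA}^{-1}(\sigma)\in G_e$.
\item Changing $e$ to $e'=\varphi_A(g)(e)$ conjugates $\rho_A$ by $g$, by Lemma~\ref{lemma: isomorphisms of G-algebras}.
\end{itemize}
An isomorphism $A\cong B$ carries some $e$ to some $e'$ with $G_e=G_{e'}$ and $H$-equivariantly identified components, by the Remark after Proposition~\ref{prop: isomorphisms of G-algebras}. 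It therefore yields conjugate homomorphisms.

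\emph{Mutual inverses.} Starting from $\rho$, the idempotent $e$ of $\operatorname{Ind}^G_H K$ supported on the coset $H$ has stabilizer $H$ and component $K$ with its original $H$-action. Taking $\iota$ to be the inclusion recovers $\rho$ exactly. Starting from $A$, the homomorphism $\rho_A$ has image $G_e$ and associated field $eA$, so $A_{\rho_A}=\operatorname{Ind}^G_{G_e}eA\cong A$ by Theorem~\ref{thm: structure of Galois algebras}.

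The main obstacle is the well-definedness bookkeeping: tracking precisely which conjugation arises when changing $\iota$ and when changing $e$. Concretely, one must check that $\iota\circ\varphi_{eA}(h)\circ\iota^{-1}$ is the restriction of $\gamma$ exactly when $\rho_A(\gamma)=h$, and that these conjugations compose consistently. I would verify this directly on elements, and settle the convention for $\operatorname{Ind}$ (left versus right cosets, $\tau(hg)=h\tau(g)$) carefully at the start.
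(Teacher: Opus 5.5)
Your proposal is correct and follows essentially the same route as the paper: $\rho\mapsto\operatorname{Ind}^G_{\rho(G_F)}\bar F^{\ker\rho}$ with well-definedness from Proposition~\ref{prop: isomorphisms of G-algebras}, the reverse map via a component $eA$ and an embedding $eA\hookrightarrow\bar F$, and the same check that the two maps are mutually inverse. Your extra verifications tighten points the paper glosses over: that the induced algebra is genuinely a Galois $(G,F)$-algebra, that the choice of $e$ does not matter (via Lemma~\ref{lemma: isomorphisms of G-algebras}), and the use of the action $(x\cdot\tau)(g)=\tau(gx)$, which, unlike the paper's stated $\tau(x^{-1}g)$, is compatible with $\tau(hg)=h\tau(g)$.
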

	\begin{proof}
		Let $\bar{F}$ be the algebraic closure of $F$, and $\rho:G_F\to G$ be a continuous homomorphism with image $H$.
		Then $K:=\bar{F}^{\ker\rho}$ is a Galois $H$-extension over $F$, where the map $\varphi_K:H\to\operatorname{Aut}_F(K)$ is induced by $\rho$.
		And $A:=\operatorname{Ind}^G_H K$ is naturally a Galois $(G,F)$-algebra.
		
		If $\chi:G_F\to G$ is another continuous homomorphism such that there exists some $g\in G$ for all $\sigma\in G_F$ we have $\chi(\sigma)=g\rho(\sigma)g^{-1}$, then $K=\bar{F}^{\ker\chi}$ is a Galois $gHg^{-1}$-extension over $F$.
		This implies that $(K,H)$ and $(K,gHg^{-1})$ are isomorphic as Galois $F$-algebras by $(1_K,c_g)$.
		Let $B:=\operatorname{Ind}^G_{gHg^{-1}}K$.
		By the Proposition~\ref{prop: isomorphisms of G-algebras}, we see that $A$ and $B$ are isomorphic as Galois $(G,F)$-algebras.
		
		Therefore, we have a well-defined map
		\begin{equation*}
			\Phi:\operatorname{Hom}(G_F,G)/\sim\to\{\text{Galois }(G,F)\text{-algebras}\}/\cong,\quad\varphi\mapsto\operatorname{Ind}^G_{\varphi(G_F)}\bar{F}^{\ker\varphi}.
		\end{equation*} 
		Let $A$ be a Galois $(G,F)$-algebra.
		By the Theorem~\ref{thm: structure of Galois algebras}, if $e$ is a primitive idempotent, then $eA$ is a Galois $(G_e,F)$-field extension.
		As an algebraic extension over $F$, we could choose an embedding $eA\hookrightarrow\bar{F}$.
		Then we have a surjective continuous homomorphism $\rho_{eA}:G_F\to\operatorname{Aut}_F(eA)$,
		and the isomorphism $\varphi:G_e\to\operatorname{Aut}_F(eA)$ induces a continuous homomorphism $\rho_A:=\varphi_A^{-1}\circ\rho_{eA}:G_F\to G$ with image $G_e$.
		By the Corollary~\ref{cor: Galois G-extensions}, we see that different embeddings $eA\hookrightarrow\bar{F}$ induces conjugate surjective homomorphisms $G_F\to\operatorname{Aut}_F(eA)$.
		Therefore the map $A\mapsto\rho_A$ is defined up to conjugation.
		That is, the algebra $A$ corresponds to a class of homomorphisms.
		
		If $B$ is another Galois $(G,F)$-algebra that is isomorphic to $A$, then by the Proposition~\ref{prop: isomorphisms of G-algebras}, we may assume without loss of generality that $e$, resp. $e'$, is a primitive idempotent of $A$, resp. of $B$, such that $G_e=G_{e'}$ and $f:(eA,\varphi_{A})\to(e'B,\varphi_{B})$ is an isomorphism of Galois $(G_e,F)$-algebras.
		Given any embedding $\sigma:e'B\hookrightarrow\bar{F}$, we see that $\sigma\circ f:eA\hookrightarrow\bar{F}$ is an embedding such that the induced maps $\rho_A=\varphi_A^{-1}\circ\rho_{eA}$ and $\rho_B=\varphi_B^{-1}\circ\rho_{eB}$ coincide.
		This implies that $A$ and $B$ correspond to the same class of continuous homomorphisms.
		So, we have a well-defined map
		\begin{equation*}
			\Psi:\{\text{Galois }(G,F)\text{-algebras}\}/\cong\to\operatorname{Hom}(G_F,G)/\sim,\quad(A,\varphi_A)\mapsto\rho_A.
		\end{equation*}
		Then it suffices to check that $\Phi$ and $\Psi$ are inverse to each other.
		Let $\rho:G_F\to G$ be a continuous homomorphism with image $H$ and kernel $G_K$.
		Up to the equivalence relations, we have $\Phi(\rho)=\operatorname{Ind}^G_H K$, denoted by $A:=\Phi(\rho)$.
		The Galois $(G,F)$-algebra $A$ has a component $K$ which is a Galois $H$-extension over $F$.
		It is already a subfield of $\bar{F}$ by its construction, so $\varphi_K:H\to\operatorname{Aut}_F(K)$ induces the map $\rho_A:G_F\to G$ by composing with the restriction of Galois actions $G_F\to\operatorname{Aut}_F(K)$.
		But this is exactly the original map $\rho$, for the map $\varphi_K:H\to\operatorname{Aut}_F(K)$ is induced by $\rho$.
		That is, the action of $H$ on the field extension $K/F$ is given by $\rho:G_F\to G$.
		So, we see that $\Psi\circ\Phi$ is the identity map.
		
		Now let $(A,\varphi_A)$ be a Galois $(G,F)$-algebra, $e$ be a primitive idempotent, and $\sigma:eA\hookrightarrow\bar{F}$ be a choice of embedding with the induced surjective map $\rho_{eA}:G_F\to\operatorname{Aut}_F(eA)$.
		By definition, $\rho_A=\Psi(A)=\varphi_A^{-1}\circ\rho_{eA}$.
		Clearly, $K:=\bar{F}^{\ker\rho_A}=\sigma(eA)$, and $K$ is a Galois $(G_e,F)$-field by the map $\rho_A:G_F\to G$.
		So, we see that $\sigma:eA\to K$ is an isomorphism of Galois $G_e$-field extensions over $F$, and $\Phi(\rho_A)=\operatorname{Ind}^G_{G_e}K$ is isomorphic to $A$ as Galois $(G,F)$-algebras by the Proposition~\ref{prop: isomorphisms of G-algebras}.
		This implies that $\Phi\circ\Psi$ is also the identity (of the opposite direction).
		And we are done.
	\end{proof}
	When $G$ is abelian, conjugation is trivial, and we have the following direct corollary.
	\begin{corollary}
		If $G$ is a finite abelian group, then we have the one-to-one correspondence between the following two sets
		\begin{equation*}
			\{\text{isomorphism classes of Galois }(G,F)\text{-algebra}\}\leftrightarrow\operatorname{Hom}(G_F,G).
		\end{equation*}
		In particular, the surjective ones correspond to the Galois $G$-extensions over $F$.
	\end{corollary}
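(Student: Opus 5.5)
This corollary follows by specializing Proposition~\ref{prop: 1-1 correspondence for G-structured algebras} to abelian $G$, so the plan is short. First I show that the relation $\sim$ on $\operatorname{Hom}(G_F,G)$ is trivial when $G$ is abelian. If $\chi=g\rho g^{-1}$ for some $g\in G$, then for every $\sigma\in G_F$ we have $\chi(\sigma)=g\rho(\sigma)g^{-1}=\rho(\sigma)$, so $\chi=\rho$. Each equivalence class is therefore a singleton, and $\operatorname{Hom}(G_F,G)/\sim$ is canonically identified with $\operatorname{Hom}(G_F,G)$. Combining this with the bijection $\Phi$ (inverse $\Psi$) from Proposition~\ref{prop: 1-1 correspondence for G-structured algebras} gives the first claim.

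For the ``in particular'' part, I trace what surjectivity means under $\Phi$. A continuous $\rho:G_F\to G$ with image $H$ is sent to $\operatorname{Ind}^G_H\bar{F}^{\ker\rho}$. This algebra has $\lvert G/H\rvert$ primitive idempotents by Theorem~\ref{thm: structure of Galois algebras}, so it is a field exactly when $H=G$, i.e.\ when $\rho$ is surjective.

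Conversely, if $A$ is a Galois $G$-field extension, its only primitive idempotent is $e=1$, so $G_e=G$. Then $\Psi(A)=\rho_A$ has image $G_e=G$, as noted in the proof of Proposition~\ref{prop: 1-1 correspondence for G-structured algebras}, and is therefore surjective. Isomorphism classes of Galois $G$-field extensions are isomorphism classes of such algebras in the sense of Definition~\ref{def: Galois algebras}, so the bijection restricts to surjective homomorphisms $\leftrightarrow$ Galois $G$-extensions.

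There is no real obstacle. The only point needing care is keeping the Galois-algebra notion of isomorphism (which fixes the $G$-action) distinct from isomorphism of underlying fields. This distinction is exactly why surjections, rather than their kernels, parametrize the objects, consistent with Corollary~\ref{cor: Galois G-extensions}: for abelian $G$ the conjugation $c_g$ is the identity, so distinct isomorphisms $\varphi$ give non-isomorphic Galois $G$-extensions.
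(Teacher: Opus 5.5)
Your proposal is correct and follows the paper's route: the paper deduces the corollary directly from Proposition~\ref{prop: 1-1 correspondence for G-structured algebras}, observing only that conjugation is trivial for abelian $G$, so that $\operatorname{Hom}(G_F,G)/\sim$ is just $\operatorname{Hom}(G_F,G)$. Your check of the ``in particular'' clause correctly fills in a step the paper leaves implicit: $\operatorname{Ind}^G_H\bar{F}^{\ker\rho}$ has $[G:H]$ components, so it is a field exactly when $\rho$ is surjective, and conversely a field has $G_e=G$.
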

	The general correspondence between the Galois $(G,F)$-algebras and homomorphisms $G_F\to G$ implies that the counting function and the generalized discriminant (see the Definition~\ref{def: generalized discriminant}) could be defined for continuous homomorphisms.
	\begin{definition}\label{def: generalized discriminant of morphisms}
		Fix a finite group $G$ and a number field $K$.
		\begin{enumerate}
			\item For each prime $\mathfrak{p}\in\mathcal{P}_K$, 
			define $G_{K_{\mathfrak{p}}}$ to be the absolute Galois group of $K_{\mathfrak{p}}$, 
			$G_{K_{\mathfrak{p}}}^t\subseteq G_{K_{\mathfrak{p}}}$ be the Galois group of the maximal tamely ramified extension over $K_{\mathfrak{p}}$, 
			and $I_{K_{\mathfrak{p}}}^t\subseteq G_{K_{\mathfrak{p}}}^{t}$ be the tame inertia subgroup.
			Let $y_{\mathfrak{p}}^{t}$ be the topological generator of $I_{K_{\mathfrak{p}}}^{t}$.
			\item Let $c_G:G\to\mathbb{R}_{\geq0}$ be a weight.
			For each prime $\mathfrak{p}\nmid\lvert G\rvert\infty$ of $K$, and for each continuous map $\rho_{\mathfrak{p}}:G_{K_{\mathfrak{p}}}\to G$, define the local generalized discriminant by
			\begin{equation*}
				C_{\mathfrak{p}}(\rho_{\mathfrak{p}}):=\mathfrak{Np}^{c_G(\rho_{\mathfrak{p}}(y_{\mathfrak{p}}^{t}))}.
			\end{equation*}
			\item For each prime $\mathfrak{p}$ of $K$, a local generalized discriminant is a map $C_{\mathfrak{p}}:\operatorname{Hom}(G_{K_{\mathfrak{p}}},G)\to\mathbb{R}_{+}$ such that $C_{\mathfrak{p}}(\rho_{\mathfrak{p}})=C_{\mathfrak{p}}(\chi_{\mathfrak{p}})$ whenever $\rho_{\mathfrak{p}}$ and $\chi_{\mathfrak{p}}$ are conjugate to each other by $G$.
			Let $c_{G}:G\to\mathbb{R}_{\geq 0}$ be a weight.
			When $\mathfrak{p}\nmid\lvert G\rvert\infty$, then $\rho_{\mathfrak{p}}:G_{K_{\mathfrak{p}}}\to G$ factors through $G_{K_{\mathfrak{p}}}^t$.
			If in this case, we have
			\begin{equation*}
				C_{\mathfrak{p}}(\rho_{\mathfrak{p}})=\mathfrak{Np}^{c_G(\rho_{\mathfrak{p}}(y_{\mathfrak{p}}^{t}))},
			\end{equation*}
			then we say that $C_{\mathfrak{p}}$ is a local generalized discriminant with respect to the weight $c_{G}$.
		\end{enumerate}		
	\end{definition}
	For primes $\mathfrak{p}\nmid\lvert G\rvert\infty$, we could define the local generalized discriminant not only for continuous homomorphisms $\rho_{\mathfrak{p}}:G_{K_{\mathfrak{p}}}^t\to G$, but also for closed normal subgroups that contain $y_{\mathfrak{p}}^t$.
	It is clear that the Definition~\ref{def: generalized discriminant of morphisms} is a generalization of the Definition~\ref{def: generalized discriminant} in the following sense.
	\begin{proposition}\label{prop: generalized discriminant of morphisms}
		Fix a finite group $G$ and a number field $K$.
		Let $c_{G}:G\to\mathbb{R}_{\geq0}$ be a weight of $G$, and $C:\mathcal{C}(G,K)\to\mathbb{R}_{+}$ be a generalized discriminant with respect to $c_{G}$.
		If $L/K$ is a Galois $G$-extension that corresponds to the continuous surjective map $\rho:G_K\to G$, then for each prime $\mathfrak{p}$ of $K$, there exists a local generalized discriminant $C'_{\mathfrak{p}}:\operatorname{Hom}(G_{K_{\mathfrak{p}}},G)\to\mathbb{R}_{+}$ such that
		\begin{equation*}
			C_{\mathfrak{p}}(L_{\mathfrak{p}})=C'_{\mathfrak{p}}(\rho_{\mathfrak{p}})
		\end{equation*}
		where $\rho_{\mathfrak{p}}$ is the continuous map (up to conjugate) corresponding to $L_{\mathfrak{p}}$.
		In particular, if $\mathfrak{p}\nmid\lvert G\rvert\infty$, then
		\begin{equation*}
			C_{\mathfrak{p}}(L_{\mathfrak{p}})=\mathfrak{Np}^{c_{G}(\rho_{\mathfrak{p}}(y^{t}_{\mathfrak{p}}))}.
		\end{equation*}
	\end{proposition}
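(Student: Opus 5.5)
The plan is to transport each local generalized discriminant $C_{\mathfrak{p}}$, which is a function on Galois $(G,K_{\mathfrak{p}})$-algebras up to isomorphism, to a function on $\operatorname{Hom}(G_{K_{\mathfrak{p}}},G)$. The tool is the bijection of Proposition~\ref{prop: 1-1 correspondence for G-structured algebras}.

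First I fix $\mathfrak{p}$ and let $\Psi_{\mathfrak{p}}$ be that bijection between Galois $(G,K_{\mathfrak{p}})$-algebras up to isomorphism and $\operatorname{Hom}(G_{K_{\mathfrak{p}}},G)/\sim$, with inverse $\Phi_{\mathfrak{p}}$. I set $C'_{\mathfrak{p}}(\chi):=C_{\mathfrak{p}}(\Phi_{\mathfrak{p}}(\chi))$. This is constant on $G$-conjugacy classes, because $\Phi_{\mathfrak{p}}$ is defined on classes. Hence $C'_{\mathfrak{p}}$ is a local generalized discriminant in the sense of Definition~\ref{def: generalized discriminant of morphisms}.

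Next I identify $\rho_{\mathfrak{p}}$. Choose an embedding $\bar K\hookrightarrow\bar K_{\mathfrak{p}}$, which restricts $G_{K_{\mathfrak{p}}}\subseteq G_K$, and set $\rho_{\mathfrak{p}}:=\rho|_{G_{K_{\mathfrak{p}}}}$. Its image is the decomposition group $G_{\mathfrak{P}}$ for the prime $\mathfrak{P}$ of $L$ singled out by the embedding. Its fixed field is $L_{\mathfrak{P}}$, and $G_{\mathfrak{P}}$ acts through $\rho$. By the global-local identity $L_{\mathfrak{p}}=L\otimes_K K_{\mathfrak{p}}\cong\operatorname{Ind}^G_{G_{\mathfrak{P}}}L_{\mathfrak{P}}$, we get $\Phi_{\mathfrak{p}}(\rho_{\mathfrak{p}})\cong L_{\mathfrak{p}}$ as Galois $(G,K_{\mathfrak{p}})$-algebras. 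Changing the embedding conjugates $\rho_{\mathfrak{p}}$ by an element of $G$, and this is harmless. Therefore $C_{\mathfrak{p}}(L_{\mathfrak{p}})=C'_{\mathfrak{p}}(\rho_{\mathfrak{p}})$.

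For $\mathfrak{p}\nmid\lvert G\rvert\infty$, the extension $L_{\mathfrak{P}}/K_{\mathfrak{p}}$ is tamely ramified, so $\rho_{\mathfrak{p}}$ factors through $G^t_{K_{\mathfrak{p}}}$. The image of the tame inertia $I^t_{K_{\mathfrak{p}}}$ is the inertia group $I_{\mathfrak{P}}\subseteq G_{\mathfrak{P}}$. Hence $\rho_{\mathfrak{p}}(y^t_{\mathfrak{p}})$ generates the cyclic inertia group, and is therefore an inertia generator $y_{\mathfrak{p}}$ in the sense of Definition~\ref{def: ramification of Galois algebra}. Different choices differ by conjugation and invertible powering. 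Since $c_G$ is invariant under $\sim$, Definition~\ref{def: generalized discriminant} gives $C_{\mathfrak{p}}(L_{\mathfrak{p}})=\mathfrak{Np}^{c_G(y_{\mathfrak{p}})}=\mathfrak{Np}^{c_G(\rho_{\mathfrak{p}}(y^t_{\mathfrak{p}}))}$.

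The only step needing care is the identification $\Phi_{\mathfrak{p}}(\rho_{\mathfrak{p}})\cong L\otimes_K K_{\mathfrak{p}}$ with the correct $G$-action, not just as algebras. I would check it by writing down the map $L\otimes K_{\mathfrak{p}}\to\prod_{gG_{\mathfrak{P}}}L_{g\mathfrak{P}}$ and applying Proposition~\ref{prop: isomorphisms of G-algebras} to the component at $\mathfrak{P}$. That component is $L_{\mathfrak{P}}$ with its $G_{\mathfrak{P}}$-action, the same data that determines $\Phi_{\mathfrak{p}}(\rho_{\mathfrak{p}})$.
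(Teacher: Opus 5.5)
Your proposal is correct and follows the paper's argument. You transport $C_{\mathfrak{p}}$ to homomorphisms via the bijection of Proposition~\ref{prop: 1-1 correspondence for G-structured algebras}, and in the tame case you identify $\rho_{\mathfrak{p}}(y^t_{\mathfrak{p}})$ with the inertia generator up to conjugation and invertible powering. Your version is in fact slightly cleaner: you define $C'_{\mathfrak{p}}=C_{\mathfrak{p}}\circ\Phi_{\mathfrak{p}}$ on all classes, where the paper assigns the value $1$ to specifications not arising from $L$, and you verify that $\rho|_{G_{K_{\mathfrak{p}}}}$ really corresponds to $L\otimes_K K_{\mathfrak{p}}$, which the paper takes for granted.
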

	\begin{proof}
		The case when $\mathfrak{p}\nmid\lvert G\rvert\infty$ follows from the fact that if $L_{\mathfrak{p}}$ and $\rho_{\mathfrak{p}}$ correspond to each other, then the inertia generator $y_{\mathfrak{p}}$ of $L_{\mathfrak{p}}$ is exactly the image of $y_{\mathfrak{p}}^t$ (up to conjugate).
		
		For each $\mathfrak{p}\mid\lvert G\rvert\infty$ of $K$, we could define 
		\begin{equation*}
			C'_{\mathfrak{p}}(\rho_{\mathfrak{p}}):=C_{\mathfrak{p}}(L_{\mathfrak{p}})
		\end{equation*}
		whenever $\rho_{\mathfrak{p}}$ is the continuous homomorphism corresponding to $L_{\mathfrak{p}}$.		
		If a Galois $(G,K_{\mathfrak{p}})$-algebra $\Sigma_{\mathfrak{p}}$ (up to isomorphism) never shows up as a local specification of the global field extension $L/K$, then we could simply assign the value $1$ to it.
		That is, if the isomorphism class $\Sigma_{\mathfrak{p}}$ of the Galois $(G,K_{\mathfrak{p}})$-algebras corresponds to $\rho_{\mathfrak{p}}$, and if for each Galois $G$-extension $L/K$ the specification $L_{\mathfrak{p}}$ at $\mathfrak{p}$ is not isomorphic to $\Sigma_{\mathfrak{p}}$, then define
		\begin{equation*}
			C'_{\mathfrak{p}}(\rho_{\mathfrak{p}})=1.
		\end{equation*}
		So, by combining these two cases, for each $\mathfrak{p}$ of $K$, we have obtained a local generalized discriminant $C'_{\mathfrak{p}}$ for the equivalence classes of the continuous homomorphisms $\operatorname{Hom}(G_{K_{\mathfrak{p}}},G)$ such that 
		\begin{equation*}
			C_{\mathfrak{p}}(L_{\mathfrak{p}})=C'_{\mathfrak{p}}(\rho_{\mathfrak{p}}),
		\end{equation*}
		whenever $L_{\mathfrak{p}}$ and $\rho_{\mathfrak{p}}$ corresponds to each other.		
	\end{proof}
	This also explains the condition of the equivalence relation under conjugation and invertible powering in the weight function $c_{G}:G\to\mathbb{R}_{\geq 0}$.
	That is, if $g_{1}\sim g_{2}$, then $c_{G}(g_{1})=c_{G}(g_{2})$.
	Because the equivalent continuous maps $G_{K_{\mathfrak{p}}}\to G$ are conjugate to each other in $G$, and when we forget the Galois structure (e.g., continuous maps with the same kernel may not be conjugate to each other in $G$), the images of the topological generator $y_{\mathfrak{p}}^t$ must satisfy the the relation $y_{1}=y_{2}^{a}$ and $y_{2}=y_{1}^{b}$ for some integers $a$ and $b$.
	Since the discriminant cannot see the different Galois structures, the equivalence relation must cover both the conjugation and the invertible powering, hence the definition of the weight function.
	
	\section{Restricted ramification}\label{sec: restricted ramification}
	In this section, let $G$ be a transitive permutation group with $\operatorname{Stab}_G(1)$ being trivial.
	In other words, if $K/k$ is a $G$-extension, then it is a Galois $G$-extension.
	\begin{definition}\label{def: restricted ramification}
		Fix a number field $k/\mathbb{Q}$.
		Let $S$ be a finite set of primes of $k$ including the ones at infinity.
		Let $\Phi$ be a set of $G$-specifications $\Sigma=(\Sigma_{\mathfrak{p}})_{\mathfrak{p}\in S}$.
		We say that $K$ is a solution to $(G,k,\Phi)$ if there exists $\Sigma\in\Phi$ such that $K$ is a solution to $(G,k,\Sigma)$, 
		that is, there exists a $G$-specification $\Sigma\in\Phi$ at $S$ such that $K/k$ is a Galois $G$-extension unramified outside $S$ with $K_{\mathfrak{p}}\cong\Sigma_{\mathfrak{p}}$ for all $\mathfrak{p}\in S$.
		Denote the set of solutions to $(G,k,\Phi)$ by $\mathcal{C}(G,k,\Phi)$ or just $\mathcal{C}(\Phi)$ when we omit $G$ and $k$.
		When $\Phi=\{\Sigma\}$, we simply write $\mathcal{C}(\Phi)=\mathcal{C}(\Sigma)$.
		In addition, let $\mathcal{C}$ be a subset of $\mathcal{C}(G,k)$ with a generalized discriminant $C:\mathcal{C}\to\mathbb{R}_{+}$,
		For each positive real number $x$, let
		\begin{equation*}
			\mathcal{C}(x):=\{K/k\in\mathcal{C}\mid C(K)=x\}.
		\end{equation*}
		In other words, the condition that $C(K)=x$ is automatically a set of $G$-specifications when $C$ is a generalized discriminant.
	\end{definition}
	This is just the classical set-up of the restricted ramification problem in the context of the arithmetic statistics.
	See also Neukirch, Schmidt and Wingberg~\cite[Chapter X]{neukirch2013cohomology}	
	\begin{definition}
		Let $k$ be a number field.
		For each $p\in\mathcal{P}$, define
		\begin{equation*}
			S_p(k):=\{\mathfrak{p}\in\mathcal{P}_k:\mathfrak{p}\mid p\}.
		\end{equation*}
		For a finite subset $S\subseteq\mathcal{P}$ containing the infinite place, define
		\begin{equation*}
			S(k):=\bigcup_{p\in S}S_p(k).
		\end{equation*}		
	\end{definition}	
	Let us first give an upper bound estimate for the solutions to the abelian extensions.
	For this purpose, we recall some standard notations.
	For a number field $K$, for each prime $\mathfrak{p}$ of $K$, let $K_{\mathfrak{p}}$ be the completion of $K$ at $\mathfrak{p}$.
	Define
	\begin{equation*}
		\begin{aligned}
			U_{\mathfrak{p}}:=\left\{
			\begin{aligned}
				&\{x\in K_{\mathfrak{p}}:\lvert x\rvert_{\mathfrak{p}}=1\}&\quad &\text{if }\mathfrak{p}\text{ is finite};
				\\
				&\mathbb{R}_+^*&\quad&\text{if }\mathfrak{p}\text{ is infinite real};
				\\
				&\mathbb{C}^*&\quad &\text{if }\mathfrak{p}\text{ is infinite complex}.
			\end{aligned}\right.
		\end{aligned}
	\end{equation*}
	In other words, $U_{\mathfrak{p}}$ is the unit group of $K_{\mathfrak{p}}$.
	Let $J_K$ be the group of id{\`e}les of $K$, and $\operatorname{C}_K$ the id{\`e}les class group.
	Let $S$ be a finite set of primes containing the ones at infinity.
	Define
	\begin{equation*}
		J_K^{S}:=\prod_{\mathfrak{p}\in S}K_{\mathfrak{p}}^*\times\prod_{\mathfrak{p}\notin S}U_{\mathfrak{p}}
	\end{equation*} 
	to be the $S$-id{\`e}les,and define $K^S:=J_K^S\cap K^*$ to be the $S$-units of $K$.
	\begin{lemma}\label{lemma: restricted ramification for abelian extensions}
		Assume that $G$ is finite abelian.
		Fix a number field $k$.
		Let $\mathcal{C}:=\mathcal{C}(G,k)$.		
		For each finite subset $S\subseteq\mathcal{P}_k$ including the ones at infinity, and for each admissible $G$-specification $\Sigma:=(\Sigma_{\mathfrak{p}})_{\mathfrak{p}\in S}$ at $S$, we have
		\begin{equation*}
			\#\mathcal{C}(\Sigma)\leq\lvert\operatorname{Hom}(\operatorname{Cl}^{S}_{k},G)\rvert\leq\lvert\operatorname{Hom}(\operatorname{Cl}_{k},G)\rvert.
		\end{equation*}
	\end{lemma}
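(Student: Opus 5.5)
The plan is to translate everything into global class field theory and compare any two solutions with one fixed solution. By the Remark following Definition~\ref{def:set of fields}, since $G$ is abelian the Galois $G$-extensions $K/k$ (up to isomorphism) are in bijection with continuous surjective homomorphisms $\chi_K:\operatorname{C}_k\to G$. Composing with $J_k\to\operatorname{C}_k$ and restricting to the local factor $k_{\mathfrak{p}}^*\hookrightarrow J_k$ gives $\chi_{K,\mathfrak{p}}:k_{\mathfrak{p}}^*\to G$. By local class field theory (compatibility of the global and local Artin maps), $\chi_{K,\mathfrak{p}}$ is the local Artin character corresponding to the continuous homomorphism $\rho_{K,\mathfrak{p}}:G_{k_{\mathfrak{p}}}\to G$ attached to $K_{\mathfrak{p}}=K\otimes k_{\mathfrak{p}}$. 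By Proposition~\ref{prop: 1-1 correspondence for G-structured algebras} and the corollary after it for abelian $G$, the isomorphism class of the Galois $(G,k_{\mathfrak{p}})$-algebra $K_{\mathfrak{p}}$ determines $\rho_{K,\mathfrak{p}}$ \emph{exactly}, because conjugation in $G$ is trivial. Hence $\chi_{K,\mathfrak{p}}$ is determined by $\Sigma_{\mathfrak{p}}$ for every $\mathfrak{p}\in S$, including the infinite places. For $\mathfrak{p}\notin S$, the condition that $K$ is unramified at $\mathfrak{p}$ means $\chi_{K,\mathfrak{p}}(U_{\mathfrak{p}})=1$.

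If $\mathcal{C}(\Sigma)$ is empty there is nothing to prove, so fix a solution $K_0$. For any solution $K$ consider $\psi_K:=\chi_K\cdot\chi_{K_0}^{-1}:\operatorname{C}_k\to G$, the pointwise quotient. This is a continuous homomorphism because $G$ is abelian. By the previous paragraph, $\psi_K$ is trivial on $k_{\mathfrak{p}}^*$ for $\mathfrak{p}\in S$ and on $U_{\mathfrak{p}}$ for $\mathfrak{p}\notin S$. It is therefore trivial on the image of the $S$-id\`eles $J_k^S$, and so it factors through
\begin{equation*}
	J_k/k^*J_k^S\cong\operatorname{Cl}_k^S,
\end{equation*}
the $S$-class group (the ideal class group modulo the classes of the finite primes in $S$). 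This isomorphism is the standard one, induced by sending an id\`ele to its ideal away from $S$. The map $K\mapsto\psi_K$ is injective, since $\chi_K=\psi_K\chi_{K_0}$ recovers $K$. This gives the first inequality $\#\mathcal{C}(\Sigma)\leq\lvert\operatorname{Hom}(\operatorname{Cl}^S_k,G)\rvert$. For the second inequality, $\operatorname{Cl}_k^S$ is a quotient of $\operatorname{Cl}_k$, and precomposition with a surjection injects $\operatorname{Hom}(\operatorname{Cl}_k^S,G)$ into $\operatorname{Hom}(\operatorname{Cl}_k,G)$.

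The main point needing care is the first step: checking that the local specification as a Galois $(G,k_{\mathfrak{p}})$-algebra pins down the restriction $\chi_{K}|_{k_{\mathfrak{p}}^*}$ on the nose, not just up to automorphism. This is where one uses that the correspondence of Proposition~\ref{prop: 1-1 correspondence for G-structured algebras} is up to $G$-conjugation only, which is trivial here, and that we count $G$-extensions with their $G$-action rather than bare fields. I would also check the infinite places explicitly: for real $\mathfrak{p}$ the specification fixes the sign character on $\mathbb{R}^*$, and for complex $\mathfrak{p}$ everything is trivial. With that in hand, the rest is the formal quotient argument above.
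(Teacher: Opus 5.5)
Your proposal is correct and is essentially the paper's argument. The paper applies $\operatorname{Hom}(-,G)$ to $1\to J_k^S/k^S\to\operatorname{C}_k\to\operatorname{Cl}_k^S\to1$ and notes that all solutions have the same restriction to $J_k^S/k^S$, since the local specifications fix it and abelian $G$ makes the local correspondence exact; so the solutions lie in one fiber of the restriction map, a coset of $\operatorname{Hom}(\operatorname{Cl}_k^S,G)$, which is your difference character $\psi_K=\chi_K\chi_{K_0}^{-1}$ made explicit, and you additionally justify the second inequality via precomposition with $\operatorname{Cl}_k\twoheadrightarrow\operatorname{Cl}_k^S$, which the paper leaves implicit.
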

	\begin{proof}    	
		It suffices to show that $\#\mathcal{C}(\Sigma)\leq\lvert\operatorname{Hom}(\operatorname{Cl}^{S}_{k},G)\rvert$.
		Assume without loss of generality that $K/k$ is a solution to $\mathcal{C}^{\Sigma}$.
		The statement follows from the short exact sequence
		\begin{equation*}
			1\to J_k^{S}/k^{S}\to\operatorname{C}_k\to\operatorname{Cl}_k^{S}\to1.
		\end{equation*}
		Apply the functor $\operatorname{Hom}(\mathrm{-},G)$ and we have a long exact sequence
		\begin{equation*}
			1\to\operatorname{Hom}(\operatorname{Cl}_k^{S},G)\to\operatorname{Hom}(\operatorname{C}_k,G)\to\operatorname{Hom}(J_k^{S}/k^{S},G)\to\cdots.
		\end{equation*}
		Since $K/k$ is clearly a solution to $(G,\Sigma)$, let $\chi_K:\operatorname{C}_k\to G$ be the surjective continuous homomorphism that corresponds to $K$.
		Denote the restriction of $\chi_K$ to $J_k^S/k^S$ by $\chi_K^S$.
		
		Claim: For each solution $L\in\mathcal{C}(\Sigma)$, the restriction of $\chi_L$ to $J_k^S/k^S$ is the same as $\chi_K^S$.
		By Wood~\cite[Lemma 2.6]{wood2010probabilities}, there is a one-to-one correspondence between the set of the isomorphism classes of $G$-structured $k_{\mathfrak{p}}$-algebras and $\operatorname{Hom}(k_{\mathfrak{p}}^*,G)$.
		For each $\mathfrak{p}\in S$, the induced local map $\chi_{L,\mathfrak{p}}:k_{\mathfrak{p}}^*\to G$ corresponds to $K_{\mathfrak{p}}$, so it coincides with $\chi_{K,\mathfrak{p}}$.
		And for each $\mathfrak{p}\notin S$, the algebra $L_{\mathfrak{p}}/k_{\mathfrak{p}}$ is unramified, which implies that the induced map $\chi_{L,\mathfrak{p}}:U_{\mathfrak{p}}\to G$ must be trivial.
		Since we can represent the map $\chi_K^S$ by a continuous homomorphism $\prod_{\mathfrak{p}}\chi_{K,\mathfrak{p}}:J_k^S\to G$ that is trivial on $k^S$, that is, the map $\chi_K^S$ is determined by the local data.
		We see that $\chi_L^S$ must be the same as $\chi_K^S$.
		
		The claim shows that $\mathcal{C}(\Sigma)$ is a subset of the preimage of $\chi_{K}^S$.
		The size of the preimage is $\lvert\operatorname{Hom}(\operatorname{Cl}_k^S,G)\rvert$ by the exactness at $\operatorname{Hom}(\operatorname{C}_k,G)$.
		Therefore, we have the estimate
		\begin{equation*}
			\#\mathcal{C}(\Sigma)\leq\lvert\operatorname{Hom}(\operatorname{Cl}^{S}_{k},G)\rvert.
		\end{equation*}
	\end{proof}
	To generalize the estimate for the abelian extensions to the solvable extensions, we need some technical results.
	\begin{lemma}\label{lemma: local specification in field extensions}
		Fix a number field $k$, and a $G$-extension $K/k$.
		\begin{enumerate}
			\item Let $L/k$ be a $G$-extension such that $L_{\mathfrak{p}}\cong K_{\mathfrak{p}}$ as Galois $(G,k_{\mathfrak{p}})$-algebras for some prime $\mathfrak{p}\in\mathcal{P}_k$.
			For each a normal subgroup $N$ of $G$, we have
			\begin{equation*}
				(K^N)_{\mathfrak{p}}\cong (L^N)_{\mathfrak{p}}
			\end{equation*}
			as Galois $(G/N,k_{\mathfrak{p}})$-algebras.
			\item Let $H$ be a subgroup of $G$, and $u$ be a prime of $k$.
			For each prime $v$ of $K^H$ lying above $u$, we have
			\begin{equation*}
				K_{v}\cong
				\prod_{
				\substack{
					w\in\mathcal{P}_K\\
					w\mid v
				}}K_{w}
				\quad\text{and}\quad
				\prod_{
					\substack{
						v\in\mathcal{P}_{K^H}\\
						v\mid u
					}
				}K_{v}\cong K_{u},
			\end{equation*}
			where $K_{v}=K\otimes(K^H)_{v}$.
			\item If $K/k$ is unramified outside some finite subset $S\subseteq\mathcal{P}_k$ including the ones at infinity, then $\mathfrak{Nd}_{K/k}$ is determined by the local specification $(K_{\mathfrak{p}})_{\mathfrak{p}\in S}$.
		\end{enumerate}		
	\end{lemma}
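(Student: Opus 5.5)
We treat the three parts separately. Throughout we use the description of local specifications via continuous homomorphisms from Proposition~\ref{prop: 1-1 correspondence for G-structured algebras}. Under this dictionary, $K_{\mathfrak{p}}$ corresponds to the conjugacy class of the restriction $\rho_{K}\vert_{G_{k_{\mathfrak{p}}}}:G_{k_{\mathfrak{p}}}\to G$. Here $\rho_K:G_k\to G$ is the global surjection attached to $K$, and the restriction depends on a choice of embedding $\bar{k}\hookrightarrow\bar{k}_{\mathfrak{p}}$ only up to conjugation in $G$.

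For (1), the hypothesis $L_{\mathfrak{p}}\cong K_{\mathfrak{p}}$ means there is $g\in G$ with $\rho_L\vert_{G_{k_{\mathfrak{p}}}}=g\,\rho_K\vert_{G_{k_{\mathfrak{p}}}}g^{-1}$. We compose both sides with the quotient map $\pi:G\to G/N$. Since $N$ is normal, $\pi\circ\rho_K$ is the homomorphism attached to the Galois $G/N$-extension $K^N/k$, and conjugation by $g$ descends to conjugation by $\pi(g)$. So the local homomorphisms attached to $(K^N)_{\mathfrak{p}}$ and $(L^N)_{\mathfrak{p}}$ are conjugate in $G/N$. Proposition~\ref{prop: 1-1 correspondence for G-structured algebras}, applied to $G/N$, then gives the desired isomorphism of Galois $(G/N,k_{\mathfrak{p}})$-algebras. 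The only check needed is that the map $K\mapsto K^N$ agrees with $\rho\mapsto\pi\circ\rho$, both globally and after completion. This holds because $(K\otimes_k k_{\mathfrak{p}})^N=K^N\otimes_k k_{\mathfrak{p}}$: taking invariants under a finite group in characteristic zero commutes with the flat base change $-\otimes_k k_{\mathfrak{p}}$.

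For (2), both isomorphisms are the standard decomposition of a tensor product with a completion. Write $M:=K^H$. Then $K\otimes_M M_v\cong\prod_{w\mid v}K_w$ is the usual statement for the finite separable extension $K/M$ (via $K\cong M[x]/(f)$ and factoring $f$ over $M_v$). Similarly $M\otimes_k k_u\cong\prod_{v\mid u}M_v$. We then compute
\begin{equation*}
K\otimes_k k_u\cong K\otimes_M(M\otimes_k k_u)\cong\prod_{v\mid u}K\otimes_M M_v=\prod_{v\mid u}K_v,
\end{equation*}
which gives the second isomorphism by associativity of tensor products.

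For (3), we use the conductor--discriminant (or local discriminant) factorization $\mathfrak{d}_{K/k}=\prod_{\mathfrak{p}}\mathfrak{d}_{K_{\mathfrak{p}}/k_{\mathfrak{p}}}$, where the local discriminant of the étale algebra $K_{\mathfrak{p}}$ is the product of the discriminants of its field factors. This factorization follows from (2) with $H=1$ and the compatibility of discriminants with completion. For $\mathfrak{p}\notin S$ the factor is trivial because $K$ is unramified there. For $\mathfrak{p}\in S$, the local discriminant depends only on the isomorphism class of $K_{\mathfrak{p}}$ as a $k_{\mathfrak{p}}$-algebra, hence only on the specification. Taking norms shows that $\mathfrak{Nd}_{K/k}=\prod_{\mathfrak{p}\in S}\mathfrak{N}\mathfrak{d}_{K_{\mathfrak{p}}/k_{\mathfrak{p}}}$ is determined by $(K_{\mathfrak{p}})_{\mathfrak{p}\in S}$.

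The main obstacle is in (1): justifying that the $G/N$-structure on $(K_{\mathfrak{p}})^N$ coincides with the one induced from the quotient homomorphism. Everything else reduces to standard facts about completions and discriminants.
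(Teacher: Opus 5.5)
Your proposal is correct and follows essentially the same route as the paper: in (1) you push the conjugacy $\rho_L\vert_{G_{k_{\mathfrak{p}}}}=g\,\rho_K\vert_{G_{k_{\mathfrak{p}}}}g^{-1}$ through the quotient $G\to G/N$ and invoke Proposition~\ref{prop: 1-1 correspondence for G-structured algebras}; in (2) you use the standard decomposition of a tensor product with a completion; and in (3) you use the local factorization of the discriminant, with trivial factors outside $S$. Your additional remarks, namely the base-change identity $(K\otimes_k k_{\mathfrak{p}})^N=K^N\otimes_k k_{\mathfrak{p}}$ and the associativity computation $K\otimes_k k_u\cong\prod_{v\mid u}K\otimes_M M_v$, fill in details that the paper asserts without proof.
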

	\begin{proof}
		(1):
		Let $\rho:G_{k}\to G$ be the surjective group homomorphism that corresponds to $K$.
		Clearly, the $G$-specification $K_{\mathfrak{p}}$ at $\mathfrak{p}$ corresponds to the continuous map $\rho_{\mathfrak{p}}:G_{k_{\mathfrak{p}}}\to G$.
		Since $N$ is a normal subgroup, we have induced maps
		\begin{equation*}
			\bar{\rho}:G_{k}\to G/N\quad\text{and}\quad\bar{\rho}_{\mathfrak{p}}:G_{k_{\mathfrak{p}}}\to G/N.
		\end{equation*}
		Clearly the map $\bar{\rho}$ corresponds to the Galois $G/N$-extension $K^N/k$.
		So the map $\bar{\rho}_{\mathfrak{p}}$ corresponds to the $G$-specification $(K^{N})_{\mathfrak{p}}$ by the Proposition~\ref{prop: 1-1 correspondence for G-structured algebras}.
		If $L$ is a Galois $G$-extension over $k$ with the corresponding continuous homomorphism $\chi:G_{k}\to G$ such that $L_{\mathfrak{p}}\cong K_{\mathfrak{p}}$, then by the Proposition~\ref{prop: 1-1 correspondence for G-structured algebras} we know that $\chi_{\mathfrak{p}}$ is conjugate to $\rho_{\mathfrak{p}}$ in the sense that there exists some $g\in G$ such that for all $\sigma\in G_{k_{\mathfrak{p}}}$ we have
		\begin{equation*}
			\chi_{\mathfrak{p}}(\sigma)=g\rho_{\mathfrak{p}}(\sigma)g^{-1}.
		\end{equation*}
		This implies immediately that for each $\sigma\in G_{k_{\mathfrak{p}}}$, we have
		\begin{equation*}
			\bar{\chi}_{\mathfrak{p}}(\sigma)=\bar{g}\bar{\rho}_{\mathfrak{p}}(\sigma)\bar{g}^{-1}
		\end{equation*}
		where $\bar{g}$ is the image of $g$ in $G/N$.
		By the Proposition~\ref{prop: 1-1 correspondence for G-structured algebras}, we know that $(L^{N})_{\mathfrak{p}}\cong(K^{N})_{\mathfrak{p}}$ as Galois $(G/N,k_{\mathfrak{p}})$-algebras.
		
		(2):
		This follows easily from the structure of $K_{v}$, which is isomorphic to
		\begin{equation*}
			\prod_{w\mid v}K_{w}.
		\end{equation*}
		And $K_{v}\cong\prod_{w\mid v}K_{w}$.
		
		(3):
		This follows from the fact that the discriminant could be computed locally, that is,
		\begin{equation*}
			\mathfrak{Nd}_{K/k}=\prod_{\mathfrak{p}\in\mathcal{P}_k}\prod_{\mathfrak{P}\mid\mathfrak{p}}\mathfrak{Nd}_{K_{\mathfrak{P}}/k_{\mathfrak{p}}}.
		\end{equation*}
		For each $\mathfrak{p}\in S$, the data of the local specification $K_{\mathfrak{p}}/k_{\mathfrak{p}}$ determines the set of all the places $\mathfrak{P}$ above $\mathfrak{p}$ with the corresponding the local extensions $K_{\mathfrak{P}}/k_{\mathfrak{p}}$.
		Therefore the local relative discriminant are fixed, hence the relative discriminant.
	\end{proof}
	\begin{lemma}\label{lemma: restricted ramification for sovlable extensions}
		\begin{enumerate}
			\item Let $A$ and $B$ be two finite abelian $p$-groups.
			We have
			\begin{equation*}
				\lvert \operatorname{Hom}(A,B)\rvert\leq\lvert A\rvert^{\operatorname{rk}_p B}.
			\end{equation*}
			\item Fix a finite abelian group $A$, and a number field $[K:\mathbb{Q}]=n$.
			There exist positive constants $c$ and $N$ depending only on $n$ and $A$ such that 
			\begin{equation}\label{eqn: restricted ramification for sovlable extensions lemma 1}
				\lvert\operatorname{Hom}(\operatorname{Cl}_K,A)\rvert\leq c d_K^{N},
			\end{equation}
			where $d_K$ is the absolute discriminant.
		\end{enumerate}
	\end{lemma}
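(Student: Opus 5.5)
For part (1), I would reduce to the cyclic case on the target side. Write $B\cong\bigoplus_{j=1}^{s}\mathbb{Z}/p^{b_j}\mathbb{Z}$ with $s=\operatorname{rk}_pB$. Since $\operatorname{Hom}(A,-)$ commutes with finite direct sums, we get
\begin{equation*}
\lvert\operatorname{Hom}(A,B)\rvert=\prod_{j=1}^{s}\lvert\operatorname{Hom}(A,\mathbb{Z}/p^{b_j}\mathbb{Z})\rvert .
\end{equation*}
So it suffices to show $\lvert\operatorname{Hom}(A,C)\rvert\leq\lvert A\rvert$ for a cyclic group $C$. This holds because $\operatorname{Hom}(A,C)$ embeds into $\operatorname{Hom}(A,\mathbb{Q}/\mathbb{Z})$, the Pontryagin dual of $A$, which has order $\lvert A\rvert$. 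Alternatively, decompose $A=\bigoplus_i\mathbb{Z}/p^{a_i}\mathbb{Z}$ and use $\lvert\operatorname{Hom}(\mathbb{Z}/p^{a}\mathbb{Z},\mathbb{Z}/p^{b}\mathbb{Z})\rvert=p^{\min(a,b)}\leq p^{a}$. Taking the product over $j$ gives $\lvert A\rvert^{\operatorname{rk}_pB}$.

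For part (2), I would first reduce to $p$-groups. Write $A=\prod_{p}A_p$ over the finitely many primes $p\mid\lvert A\rvert$. Then $\operatorname{Hom}(\operatorname{Cl}_K,A)=\prod_p\operatorname{Hom}(\operatorname{Cl}_K,A_p)=\prod_p\operatorname{Hom}(\operatorname{Cl}_K\otimes\mathbb{Z}_p,A_p)$. By part (1),
\begin{equation*}
\lvert\operatorname{Hom}(\operatorname{Cl}_K,A)\rvert\leq\prod_{p\mid\lvert A\rvert}\lvert\operatorname{Cl}_K\otimes\mathbb{Z}_p\rvert^{\operatorname{rk}_pA}\leq h_K^{\,r},
\end{equation*}
where $r:=\sum_{p}\operatorname{rk}_pA$ depends only on $A$. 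This reduces everything to a polynomial bound on the class number $h_K$ in terms of $d_K$.

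For the class number bound, I would use the Minkowski bound, in keeping with the abstract. Every ideal class contains an integral ideal of norm at most $M_K=\frac{n!}{n^n}\left(\frac{4}{\pi}\right)^{r_2}\sqrt{d_K}\leq\sqrt{d_K}$. Hence $h_K$ is at most the number of integral ideals of norm $\leq\sqrt{d_K}$, which equals $\sum_{m\leq\sqrt{d_K}}a_K(m)$, where $a_K(m)$ counts ideals of norm $m$. Each ideal of norm $m$ factors into primes above the primes dividing $m$, and the number of such factorizations is at most $\tau_n(m)$, the $n$-fold divisor function. This follows prime by prime: the number of ideals of norm $p^e$ is at most the number of ways to write $e$ as a sum over at most $n$ primes of $f_i$-weighted exponents, which is at most $\binom{e+n-1}{n-1}=\tau_n(p^e)$. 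Since $\tau_n(m)\ll_{n,\epsilon}m^{\epsilon}$, we get $h_K\ll_{n}d_K^{1/2+\epsilon}$. The crude bound $\tau_n(m)\leq m^{\,n}$ would also suffice, giving $h_K\leq d_K^{(n+1)/2}$, and this avoids all constants. Therefore $\lvert\operatorname{Hom}(\operatorname{Cl}_K,A)\rvert\leq c\,d_K^{N}$ with, for instance, $N=r(n+1)/2$ and $c=1$.

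The only step needing care is the ideal-counting bound $a_K(m)\leq\tau_n(m)$ (or the cruder $a_K(m)\leq m^n$), which should be handled via multiplicativity over rational primes. Everything else is formal.
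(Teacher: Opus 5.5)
Your proposal is correct and follows essentially the paper's route: part (1) via the cyclic decomposition with $\lvert\operatorname{Hom}(\mathbb{Z}/p^{a}\mathbb{Z},\mathbb{Z}/p^{b}\mathbb{Z})\rvert=p^{\min(a,b)}$, and part (2) by using part (1) on $p$-primary parts to reduce to a power of $h_K$, then bounding $h_K$ via the Minkowski bound. You also supply the ideal-counting step ($a_K(m)\leq\tau_n(m)$, hence $h_K\ll_{n,\epsilon} d_K^{1/2+\epsilon}$) that the paper leaves implicit. Your exponent $r=\sum_p\operatorname{rk}_pA$ is slightly weaker than the paper's $\sup_p\operatorname{rk}_pA$ but is equally sufficient.
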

	\begin{proof}
		(1):
		For finite cyclic $p$-groups, we see that
		\begin{equation*}
			\operatorname{Hom}(\mathbb{Z}/p^a\mathbb{Z},\mathbb{Z}/p^b\mathbb{Z})\cong\mathbb{Z}/p^{\min\{a,b\}}\mathbb{Z}.
		\end{equation*}
		In general, write $A\cong\prod_{i=1}^{\operatorname{rk}_p A}\mathbb{Z}/p^{a_i}\mathbb{Z}$ and $B\cong\prod_{i=1}^{\operatorname{rk}_p B}\mathbb{Z}/p^{b_i}\mathbb{Z}$, we have
		\begin{equation*}
			\operatorname{Hom}(A,B)\cong\prod_{i=1}^{\operatorname{rk}_p A}\prod_{j=1}^{\operatorname{rk}_p B}\mathbb{Z}/p^{\min\{a_i,b_j\}}\mathbb{Z}.
		\end{equation*}
		This implies that
		\begin{equation*}
			\lvert\operatorname{Hom}(A,B)\rvert\leq\prod_{j=1}^{\operatorname{rk}_p B}\prod_{i=1}^{\operatorname{rk}_p A}p^{a_i}=\lvert A\rvert^{\operatorname{rk}_p B}.
		\end{equation*}
		And we are done for (1).
		
		(2):
		By (1), we have
		\begin{equation*}
			\begin{aligned}
				\lvert\operatorname{Hom}(\operatorname{Cl}_K,A)\rvert=&\prod_{p}\lvert\operatorname{Hom}(\operatorname{Cl}_K[p^\infty],A[p^\infty])\rvert
				\\
				\leq&\prod_p\lvert\operatorname{Cl}_K[p^\infty]\rvert^{\operatorname{rk}_p A}
				\leq\lvert\operatorname{Cl}_K\rvert^{r_A},
			\end{aligned}
		\end{equation*}
		where $r_A=\sup_p\operatorname{rk}_pA$.
		By Minkowski bound of the size of the class group, for each $\varepsilon>0$, there exists a constant $c=c(n,\varepsilon)$ such that
		\begin{equation*}
			\lvert\operatorname{Cl}_K\rvert^{r_A}\leq c d_K^{\frac{1+\varepsilon}{2}r_A},
		\end{equation*}
		where $d_K$ is the absolute discriminant of $K/\mathbb{Q}$.
		And this shows the existence of $c$ and $N$.
	\end{proof}
	Let us prove an upper bound of solutions to the restricted ramification for solvable extensions.
	\begin{theorem}\label{thm: restricted ramification for solvable extensions}
		Assume that $G$ is solvable, and there exists $n\in\mathbb{Z}_+$ and a tower of subgroups of $G$:
		\begin{equation*}
			\{e_G\}=G_{l}\triangleleft G_{l-1}\triangleleft\cdots\triangleleft G_0=G
		\end{equation*}
		such that $G_{i+1}$ is normal in $G_i$ and $G_i/G_{i+1}$ is abelian for each $0\leq i\leq n-1$.
		Fix a number field $k$, and a finite set $S$ of rational primes including the infinity.
		For each $0\leq i\leq n-1$, let $c_i$ and $N_i$ be the constants from Lemma~\ref{lemma: restricted ramification for sovlable extensions} depending only on $n_i:=[k:\mathbb{Q}]\lvert G/G_i\rvert$ and $G_{i}/G_{i+1}$ such that for each number field $[L:\mathbb{Q}]=n_i$, we have
		\begin{equation*}
			\lvert\operatorname{Hom}(\operatorname{Cl}_L,G_i/G_{i+1})\rvert\leq c_id_L^{N_i}.
		\end{equation*}
		Let $K/k$ be a $G$-extension unramified outside $S$, and $\Sigma:=(K_{\mathfrak{p}})_{\mathfrak{p}\in S(k)}$.
		We have
		\begin{equation}\label{eqn: restricted ramification for solvable extensions thm 1}
			\#\mathcal{C}(\Sigma)\leq \prod_{i=0}^{n-1}c_id_{K_i}^{N_i},
		\end{equation}
		where $K_i:=K^{G_i}$.
	\end{theorem}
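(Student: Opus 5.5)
We argue by induction along the tower, climbing one abelian layer at a time and fixing, at each layer, the local data inherited from $\Sigma$. For $0\leq i\leq n$ let $K_i:=K^{G_i}$, and for any solution $L\in\mathcal{C}(\Sigma)$ let $L_i:=L^{G_i}$. Then $L=L_n$ and $K_0=L_0=k$. We define $\mathcal{T}_i$ to be the set of fields $L_i$ that arise from solutions $L\in\mathcal{C}(\Sigma)$, together with their $G/G_i$-structure when $G_i$ is normal in $G$ (otherwise as extensions of $k$). Since $L$ is recovered from its top layer, it suffices to show
\begin{equation*}
\#\mathcal{T}_{i+1}\leq c_id_{K_i}^{N_i}\,\#\mathcal{T}_i.
\end{equation*}
Iterating this from $\#\mathcal{T}_0=1$ gives \eqref{eqn: restricted ramification for solvable extensions thm 1}.

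To prove the inductive step, fix $M\in\mathcal{T}_i$ and count the $L_{i+1}\in\mathcal{T}_{i+1}$ lying over it. Each such $L_{i+1}/M$ is an abelian extension with Galois group $G_i/G_{i+1}$, and it is unramified outside the primes of $M$ above $S$, since $L$ is unramified outside $S$.

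The local data of $L_{i+1}/M$ at these primes are determined by $\Sigma$. Lemma~\ref{lemma: local specification in field extensions}(1) and (2) pass from $\Sigma_{\mathfrak{p}}$, $\mathfrak{p}\in S(k)$, to the specification of $L^{G_{i+1}}$ over the primes $v\mid\mathfrak{p}$ of $M=L^{G_i}$. For this we write $L_{\mathfrak{p}}\cong\operatorname{Ind}^G_{D}L_{\mathfrak{P}}$ and restrict to $G_i$. The primes $v$ then correspond to double cosets $G_i\backslash G/D$, and the local algebra at $v$ is read off from $\rho_{\mathfrak{p}}$. Thus the specification of $L_{i+1}/M$ at $S(M)$ lies in a set that is fixed once $\Sigma$ and the embedding data are fixed.

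To apply Lemma~\ref{lemma: restricted ramification for abelian extensions}, all solutions in the fiber must share a single specification $\Sigma'$. The subtlety is that the $G_i/G_{i+1}$-structure on $L_{i+1}/M$ is only defined up to the choices above, so we identify it by the conjugation in $\rho_{\mathfrak{p}}$ uniformly over the fiber. Granting this, the lemma gives at most $\lvert\operatorname{Hom}(\operatorname{Cl}_M,G_i/G_{i+1})\rvert$ solutions. Lemma~\ref{lemma: restricted ramification for sovlable extensions}(2) then bounds this by $c_id_M^{N_i}$, because $[M:\mathbb{Q}]=[k:\mathbb{Q}]\lvert G/G_i\rvert=n_i$.

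It remains to replace $d_M$ by $d_{K_i}$. Both $M$ and $K_i$ are unramified outside $S$. By Lemma~\ref{lemma: local specification in field extensions}(1),(2) their local specifications above $S$ agree, since both descend from $\Sigma$. Lemma~\ref{lemma: local specification in field extensions}(3) then gives $d_M=d_{K_i}$. Hence every fiber has size at most $c_id_{K_i}^{N_i}$, which is the inductive inequality.

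The main obstacle is the local bookkeeping in the second step: showing that all $L_{i+1}$ over a fixed $M$ carry literally the same $G_i/G_{i+1}$-specification at $S(M)$, rather than merely isomorphic underlying algebras. This matters because $G_i$ need not be normal in $G$, so the conjugating element $g$ in Proposition~\ref{prop: 1-1 correspondence for G-structured algebras} need not normalize $G_i$. The first attempt will be to work with the global homomorphisms $\rho:G_k\to G$ directly. Restricting $\rho$ to $G_M$ gives a map to $G_i$, and composing with the quotient gives a map to $G_i/G_{i+1}$. The goal is to check that, after fixing $\rho_{\mathfrak{p}}$ exactly (rather than up to conjugacy) by adjusting $\rho$ by a global conjugation, the local restrictions at each $v\mid\mathfrak{p}$ are exactly determined. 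If that fails, the fallback is a weaker bound: the number of possible specifications is at most a constant depending on $G$ and $[k:\mathbb{Q}]$, and that constant is absorbed into $c_i$.
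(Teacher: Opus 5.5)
\textbf{There is a genuine gap, at the step you "grant", and it cannot be repaired: the stated bound is false.}

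Your route is the paper's route, unrolled. The paper peels off the abelian layer $K_1/k$ and re-applies the statement to $K/K_1$ over the base $K_1$. That is your iterated fiber bound: the abelian lemma plus Minkowski on each layer, then $d_M=d_{K_i}$.

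The gap is the claim that all $L_{i+1}$ over a fixed $M$ carry literally the same $G_i/G_{i+1}$-specification at $S(M)$. The paper asserts the same thing (``$K/K_1$ has the same local specification at $S(K_1)$'') without proof, so it has the same hole. Lemma~\ref{lemma: local specification in field extensions} only fixes each $\rho_{\mathfrak p}$, $\mathfrak p\in S(k)$, up to $G$-conjugacy, i.e.\ $\rho'_{\mathfrak p}=h_{\mathfrak p}\rho_{\mathfrak p}h_{\mathfrak p}^{-1}$. One global conjugation can remove $h_{\mathfrak p}$ at a single prime only. At the remaining primes, $h_{\mathfrak p}$ twists the $G_i/G_{i+1}$-valued local data by conjugation. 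This twist is nontrivial even when $G_i\triangleleft G$, so non-normality is not the real issue.

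Your fallback fails too. The number of candidate specification tuples at $S(M)$ grows exponentially with $\#S(k)$. It therefore cannot be absorbed into $c_i$, which may depend only on $n_i$ and $G_i/G_{i+1}$.

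Here is a counterexample to the stated inequality. Take $k=\mathbb{Q}$, $G=S_3\supset A_3$ and $K_1=M=\mathbb{Q}(i)$; the right-hand side is then the constant $c_0c_1\cdot4^{N_1}$.

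Choose primes $p_1,\dots,p_s\equiv1\pmod{12}$, $p_j=\pi_j\bar\pi_j$, such that $\pi_k/\bar\pi_k$ is a cube modulo $\pi_j$ for all $j\neq k$. This is possible inductively by Chebotarev: require $\pi_t$ to split completely in $M(\zeta_3,\sqrt[3]{\pi_k/\bar\pi_k})$ and to have a generator $\pi_t\equiv1\bmod p_k$ for all $k<t$. Let $\psi_j$ be a ramified cubic character of $M_{(\pi_j)}^*=\mathbb{Q}_{p_j}^*$ with $\psi_j(\pi_j/\bar\pi_j)=0$.

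For $a\in\{\pm1\}^s$, let $\chi_a$ be the id\`ele character that equals $a_j\psi_j$ at $(\pi_j)$ and $y\mapsto-a_j\psi_j(\bar y)$ at $(\bar\pi_j)$. It is trivial at $(1+i)$ and at $\infty$, and trivial on all other local unit groups. On an $S$-unit $u$ its value is $\sum_ja_j\psi_j(u/\bar u)$. This vanishes for $u=i,1+i,\pi_k,\bar\pi_k$, since $-1$ and $i$ are cubes. Because $h_M=1$, $\chi_a$ is an anti-invariant cubic character of $\operatorname{C}_M$, so it cuts out an $S_3$-field $L_a\supset M$.

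All $L_a$ are unramified outside $S=\{\infty,2,p_1,\dots,p_s\}$. They have identical $S_3$-specifications on $S$: at $p_j$ the local maps are $\pm\psi_j$, which are conjugate by a transposition, and at $2$ and $\infty$ everything comes from $M$. Yet $L_a=L_b$ only if $a=\pm b$. Hence $\#\mathcal{C}(\Sigma)\geq2^{s-1}$, which is unbounded in $s$. Note that the $A_3$-specification of $L_a/M$ at $(\pi_j)$ is $a_j\psi_j$, which varies with $a$. This is exactly the failure of your granted step, even though $A_3\triangleleft S_3$.

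What your layer-by-layer argument does prove is the stated bound multiplied by $B^{\#S(k)}$, with $B$ depending only on $G$ and $[k:\mathbb{Q}]$. That is the statement to aim for. It appears sufficient for Proposition~\ref{prop: moment for solvable extensions}, where factors of the shape $a^{\omega(n)}$ already occur.
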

	\begin{proof}
		First of all, the expression~(\ref{eqn: restricted ramification for solvable extensions thm 1}) is well-defined in the sense that it is independent of the choice of $K\in\mathcal{C}(\Sigma)$.
		Because for each $K'\in\mathcal{C}(\Sigma)$, the extension $K'/k$ is unramified outside $S(k)$, and its local specification at $S(k)$ is given by $\Sigma$.
		So, we have $d_{K'}=d_{K}$.
		Moreover, Lemma~\ref{lemma: local specification in field extensions}(2) shows that
		\begin{equation*}
			\mathfrak{Nd}_{K/K_i}=\mathfrak{Nd}_{K'/K'_i},
		\end{equation*}
		because as $G_i$-extensions, they have exactly the same local specification at $S(K_i)$.
		Since $d_{K}=\mathfrak{Nd}_{K/K_i}\cdot d_{K_i}^{\lvert G_i\rvert}$, we see that $d_{K_i}=d_{K'_i}$.
		And the formula does not depend on the choice of $K\in\mathcal{C}(\Sigma)$.
		
		We prove the statement by induction on $n$, the length of the tower of subgroups of the solvable group $G$.
		If $G$ is abelian, then this reduces to the case of abelian extensions, and the upper bound simply comes from the Minkowski bound for the size of the class group.
		In other words, the statement is true when $n=1$.
		
		Now assume that the statement is true for $1,\dots,n-1$, where $n\geq2$.
		The subgroup $G_1$ is normal in $G$, and it is also solvable with the tower
		\begin{equation*}
			\{e_G\}=G_{l}\triangleleft G_{l-1}\triangleleft\cdots\triangleleft G_1.
		\end{equation*}
		By Lemma~\ref{lemma: local specification in field extensions}, for each $K\in\mathcal{C}(\Sigma)$, the field extension $K_1/k$ has the same local specification at $S(k)$, and $K/K_1$ has the same local specification at $S(K_1)$.
		So, we could count the number $\#\mathcal{C}(\Sigma)$ by the following method.
		Define $\mathcal{C}(G/G_1,\Sigma)$ to be the set of solutions to the $G/G_1$-extensions $K_1'/k$ unramified outside $S(k)$ such that $K'_{1,\mathfrak{p}}\cong K_{1,\mathfrak{p}}$ for each $\mathfrak{p}\in S(k)$.
		And for each $K'_1\in\mathcal{C}(G/G_1,\Sigma)$, define $\mathcal{C}(G_1,K'_1,\Sigma)$ to be the set of solutions to the $G_1$-extensions $K'/K'_1$ unramified outside $S(K_1')$ such that $K'_{\mathfrak{P}}\cong_{G_1} K_{\mathfrak{P}}$ for each $\mathfrak{P}\in S(K_1')$.
		Consider the sum
		\begin{equation*}
			\sum_{K'_1\in\mathcal{C}(G/G_1,\Sigma)}\#\mathcal{C}(G_1,K_1',\Sigma).
		\end{equation*}
		By induction assumption, we see that $\#\mathcal{C}(G_1,K_1',\Sigma)\leq\prod_{i=1}^{n-1}c_id_{K'_i}^{N_i}$. 
		It is independent of the choice of $K'_1$ in the sense that $d_{K'_i}=d_{K_i}$ for all $K'\in\mathcal{C}({\Sigma})$.
		The number of $K'_1\in\mathcal{C}(G/G_1,{\Sigma})$ is reduced to the number of abelian extensions over $k$ unramified outside $S$ with a fixed specification at $S(k)$, the case when $n=1$.
		So, we have
		\begin{equation*}
			\#\mathcal{C}({\Sigma})\leq c_0d_{K_0}^{N_0}\prod_{i=1}^{n-1}c_id_{K_i}^{N_i}=\prod_{i=0}^{n-1}c_id_{K_i}^{N_i}.
		\end{equation*}
	\end{proof}
	Generally speaking, this is just a ``trivial'' upper bound estimate for the restricted ramification of solvable fields, for it is just a generalization of the Minkowski bound for the class groups.
	However, under suitable conditions, we will see that this coarse upper bound is enough for us to prove some statistical results.
	
    \section{Solvable extension with a normal abelian subgroup}\label{section: Solvable extension with a normal abelian subgroup}
    In this section, let $G$ be a finite transitive permutation group with $\operatorname{Stab}_{G}(1)$ trivial that satisfies the following short exact sequence
    \begin{equation}\label{eqn: s.e.s. of the Galois group}
    	1\to N\to G\to H\to1
    \end{equation}
    where $N$ is an abelian $p$-group and $\gcd(\lvert N\rvert,\lvert H\rvert)=1$.
    Clearly, $N$ is a finite $H$-module, where the action of $H$ is given by the conjugation.
    To be precise, for each $h\in H$, let $\tilde{h}$ be a preimage in $G$ under the above sequence.
    Then for each $g\in N$, we have
    \begin{equation*}
    	h\cdot g:=\tilde{h}g\tilde{h}^{-1}.
    \end{equation*}
    It is routine to check that this is well-defined.
    Before we get into the statements on the arithmetic statistics, let us explain that this set-up actually includes more situations than it appears.
    \begin{proposition}\label{prop: solvable extension with a normal abelian subgroup}
    	Let $G$ be a finite solvable group with a tower
    	\begin{equation*}
    		\{e_{G}\}=G_{l}\triangleleft G_{l-1}\triangleleft\cdots\triangleleft G_{1}\triangleleft G_{0}=G
    	\end{equation*}
    	such that the quotient $G_{i}/G_{i+1}$ is abelian for all $0\leq i\leq l-1$.
    	If $G_{l-1}$ is normal in $G$ and $p$ is a prime such that $p\mid\lvert G_{l-1}\rvert$ and $p\nmid\lvert G/G_{l-1}\rvert$, then $G$ admits a unique (hence normal) Sylow $p$-subgroup $N$ included in $G_{l-1}$, and it fits into the short exact sequence as in (\ref{eqn: s.e.s. of the Galois group}).
    \end{proposition}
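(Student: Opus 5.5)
The plan is to take $N$ to be the $p$-primary part of the last nontrivial term $G_{l-1}$, and to show that it is normal and already carries the full power of $p$ dividing $\lvert G\rvert$.

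First I will observe that $G_{l-1}$ is abelian: it is $G_{l-1}/G_l$ with $G_l=\{e_G\}$, and every quotient in the tower is abelian by hypothesis. Then I will set
\begin{equation*}
	N:=G_{l-1}[p^\infty]=\{g\in G_{l-1}\mid g^{p^k}=e_G\text{ for some }k\geq0\}.
\end{equation*}
Since $G_{l-1}$ is a finite abelian group, $N$ is a subgroup, namely the unique Sylow $p$-subgroup of $G_{l-1}$. It is nontrivial because $p\mid\lvert G_{l-1}\rvert$.

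Next I will show that $N$ is normal in $G$. The subgroup $N$ is characteristic in $G_{l-1}$, because any automorphism preserves element orders. For $x\in G$, conjugation by $x$ restricts to an automorphism of $G_{l-1}$, since $G_{l-1}\triangleleft G$ by assumption. Hence $xNx^{-1}=N$, and $N\triangleleft G$.

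Then I will compare orders. Write $\lvert G\rvert=\lvert G/G_{l-1}\rvert\cdot\lvert G_{l-1}\rvert$. Because $p\nmid\lvert G/G_{l-1}\rvert$, the exact power of $p$ dividing $\lvert G\rvert$ equals the exact power dividing $\lvert G_{l-1}\rvert$, which is $\lvert N\rvert$. So $N$ is a Sylow $p$-subgroup of $G$. All Sylow $p$-subgroups are conjugate and $N$ is normal, so $N$ is the unique Sylow $p$-subgroup of $G$, and it is contained in $G_{l-1}$ by construction. Conversely, any Sylow $p$-subgroup is conjugate to $N$ and therefore equals $N$, so the uniqueness claim in the statement is consistent.

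Finally, put $H:=G/N$. This gives the short exact sequence $1\to N\to G\to H\to1$ as in (\ref{eqn: s.e.s. of the Galois group}), with $N$ an abelian $p$-group. Since $N$ is a full Sylow $p$-subgroup, $p\nmid\lvert H\rvert$, and so $\gcd(\lvert N\rvert,\lvert H\rvert)=1$. If a complement is wanted, as Theorem~\ref{thm: solvable group with abelian normal subgroup S1} uses, Schur--Zassenhaus supplies one.

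None of these steps is a real obstacle; the only point needing care is the normality of $N$ in $G$, which goes through the characteristic-subgroup argument rather than through normality in $G_{l-1}$ alone.
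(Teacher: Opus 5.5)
Your proof is correct. It reaches the same conclusion as the paper but runs the argument in the opposite direction.

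The paper starts from an arbitrary Sylow $p$-subgroup $P$ of $G$. Its image in $G/G_{l-1}$ is a $p$-subgroup of a group of order prime to $p$, hence trivial, so $P\subseteq G_{l-1}$. Since $G_{l-1}$ is abelian, it has only one Sylow $p$-subgroup, so every Sylow $p$-subgroup of $G$ equals it. Normality then follows from uniqueness.

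You instead construct $N$ explicitly as the $p$-primary part of $G_{l-1}$. You get normality in $G$ from the fact that a characteristic subgroup of a normal subgroup is normal. You get the Sylow property from an order count, and you deduce uniqueness from the conjugacy part of Sylow's theorem.

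The paper's route is slightly more economical. It uses only the existence of Sylow subgroups, not their conjugacy or the characteristic-subgroup fact. Its quotient argument also shows for free that every $p$-subgroup of $G$ lies in $G_{l-1}$, hence in $N$.

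Your route has the advantage of exhibiting $N$ concretely. It also spells out what the paper leaves as ``the rest just follows'': $H=G/N$ has order prime to $p$, so $\gcd(\lvert N\rvert,\lvert H\rvert)=1$, and Schur--Zassenhaus supplies the complement used in Theorem~\ref{thm: solvable group with abelian normal subgroup S1}.
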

    \begin{proof}
    	Let $N$ be any Sylow $p$-subgroup.
    	The image of $N$ in $G/G_{l-1}$ must be a $p$-subgroup of the quotient group.
    	But according to our condition, the size of $G/G_{l-1}$ is coprime to $p$.
    	This shows that $N\subseteq G_{l-1}$.
    	Since $G_{l-1}$ is a subgroup of $G$, this means that $N$ must be a Sylow $p$-subgroup of $G_{l-1}$.
    	But according to the tower of $G$, the subgroup $G_{l-1}$ is abelian.
    	In other words, we have shown that any Sylow $p$-subgroup of $G$ must be included in $G_{l-1}$ and equal to the unique Sylow $p$-subgroup $N$ of $G_{l-1}$.
    	The rest of the statement just follows, and we are done for the proof.
    \end{proof}
    \begin{remark}
    	This statement shows that we could simply require that the finite group $G$ is solvable and it admits a normal abelian subgroup $N$ such that $p\mid\lvert N\rvert$ and $p\nmid\lvert G/N\rvert$, hence the title of this section.
    \end{remark}
    Let us restate and prove one of the main results, Theorem~\ref{thm: solvable group with abelian normal subgroup S1}, in the introduction section step-by-step.
    \begin{definition}\label{def: the set of G-specifications Phi}
    	Recall that $G$ is a transitive permutation group with $\operatorname{Stab}_{G}(1)=\{e_{G}\}$.
    	Let $e_{G}\notin\Omega\subseteq G$ be a subset closed under conjugation and invertible powering, that is, if $g_{1}\sim g_{2}$, then $g_{1}\in\Omega$ if and only if $g_{2}\in\Omega$.
    	Fix a number field $k$, and let $\mathcal{C}$ be a subset of $\mathcal{C}(G,k)$ with a generalized discriminant $C$.
    	\begin{enumerate}
    		\item For each positive integer $n$, define $T_{k}(n):=\{\mathfrak{p}\in\mathcal{P}_{k}\mid\mathfrak{p}\nmid n\infty\}$.
    		For a subset $T$ of $\mathcal{P}_{k}$, define $I_{T}^{+,\mu}$ to be the set of square-free integral ideals $\mathfrak{a}$ of $k$ such that $\mathfrak{p}\mid\mathfrak{a}\Rightarrow\mathfrak{p}\in T$.
    		\item Let $T:=T_{k}(\lvert G\rvert)$.
    		For each non-negative integer $\gamma$, define $\Phi(\Omega,\gamma)$ to be the set of $G$-specifications as follows:
    		\begin{equation*}
    			\Phi(\Omega,\gamma):=
    			\bigsqcup_{
    				\substack{
    					\mathfrak{a}\in I_{T}^{+,\mu} \\
    					\omega(\mathfrak{a})=\gamma
    				}
    			}\{\Sigma=(\Sigma_{\mathfrak{p}}:\rho_{\mathfrak{p}}(y_{\mathfrak{p}}^{t})\in\Omega)_{\mathfrak{p}\mid\mathfrak{a}}\},
    		\end{equation*}
    		where $\omega(\mathfrak{a})$ is the number of distinct primes dividing $\mathfrak{a}$, and $\rho_{\mathfrak{p}}:G_{k_{\mathfrak{p}}}\to G$ is the continuous homomorphism corresponding to $\Sigma_{\mathfrak{p}}$ (see also Definition~\ref{def: generalized discriminant of morphisms}).
    		\item For each non-negative integer $\gamma$, define
    		\begin{equation*}
    			\mathcal{C}_{\Omega}^{\gamma}:=\mathcal{C}(\Phi(\Omega,\gamma)).
    		\end{equation*}
    	\end{enumerate}		
    \end{definition}
    For the notation $\mathcal{C}_{\Omega}^{\gamma}$, we have a description from the aspect of the arithmetic.
    \begin{lemma}
    	Fix a number field $k$, and a subset $e_{G}\notin\Omega$ of $G$ that is closed under conjugation and invertible powering.
    	Let $\mathcal{C}:=\mathcal{C}(G,k)$.
    	A $G$-extension $K/k$ is contained in $\mathcal{C}_{\Omega}^{\gamma}$ if and only if there exists exactly $\gamma$ tamely ramified primes $\mathfrak{p}$ such that the inertia subgroup $I_{\mathfrak{p}}$ is generated by some element in $\Omega$ for each $1\leq i\leq\gamma$.
    \end{lemma}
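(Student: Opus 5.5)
The plan is to unwind the definitions of $\Phi(\Omega,\gamma)$ and $\mathcal{C}(\Phi)$ and compare them prime by prime, using the dictionary between local specifications and local Galois representations. First I fix how the statement must be read. A $G$-specification in $\Phi(\Omega,\gamma)$ is indexed by a square-free ideal $\mathfrak{a}\in I_T^{+,\mu}$, with $T=T_k(\lvert G\rvert)$ and $\omega(\mathfrak{a})=\gamma$. It prescribes at each $\mathfrak{p}\mid\mathfrak{a}$ a local algebra whose tame inertia generator lies in $\Omega$. I will read the statement ``$K$ is a solution'' as saying two things. First, $K_{\mathfrak{p}}\cong\Sigma_{\mathfrak{p}}$ for $\mathfrak{p}\mid\mathfrak{a}$. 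Second, the set of tamely ramified primes of $K$ whose inertia generator lies in $\Omega$ is exactly the set of primes dividing $\mathfrak{a}$. The second clause is the one that the phrase ``exactly $\gamma$'' requires. Making this reading explicit, and noting that it is the one consistent with Definition~\ref{def: restricted ramification} once ramification outside the $\Omega$-locus is left unconstrained, is the only delicate point. I expect it to be the main obstacle, since everything else is formal.

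A preliminary remark is used in both directions. For $\mathfrak{p}\nmid\lvert G\rvert\infty$, the condition ``$\rho_{\mathfrak{p}}(y_{\mathfrak{p}}^t)\in\Omega$'' depends only on the isomorphism class of $K_{\mathfrak{p}}$ as a Galois $(G,k_{\mathfrak{p}})$-algebra. By Proposition~\ref{prop: 1-1 correspondence for G-structured algebras}, isomorphic algebras correspond to $G$-conjugate homomorphisms $\rho_{\mathfrak{p}}$. A different choice of topological generator of $I^t_{k_{\mathfrak{p}}}$ replaces $\rho_{\mathfrak{p}}(y^t_{\mathfrak{p}})$ by an invertible power of itself. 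Hence the value $\rho_{\mathfrak{p}}(y^t_{\mathfrak{p}})$ is well defined up to $\sim$, and $\Omega$ is closed under $\sim$ by hypothesis. This is the same argument as in Proposition~\ref{prop: generalized discriminant of morphisms}. Also, $\rho_{\mathfrak{p}}(y^t_{\mathfrak{p}})$ generates the inertia group $I_{\mathfrak{p}}$ up to conjugation (Definition~\ref{def: ramification of Galois algebra}). So ``$I_{\mathfrak{p}}$ is generated by an element of $\Omega$'' is equivalent to ``$\rho_{\mathfrak{p}}(y_{\mathfrak{p}}^t)\in\Omega$''. Moreover, since $e_G\notin\Omega$, every such prime is genuinely ramified.

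For the forward direction, suppose $K$ has exactly $\gamma$ tamely ramified primes $\mathfrak{p}_1,\dots,\mathfrak{p}_\gamma$ of this type, all lying in $T$. Set $\mathfrak{a}=\mathfrak{p}_1\cdots\mathfrak{p}_\gamma$, which is square-free and lies in $I_T^{+,\mu}$ with $\omega(\mathfrak{a})=\gamma$. Take $\Sigma=(K_{\mathfrak{p}})_{\mathfrak{p}\mid\mathfrak{a}}$. By the remark, $\Sigma$ belongs to $\Phi(\Omega,\gamma)$, and $K$ is a solution to it tautologically, so $K\in\mathcal{C}_\Omega^\gamma$. For the converse, let $K\sim\Sigma$ with $\Sigma\in\Phi(\Omega,\gamma)$ indexed by $\mathfrak{a}$. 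For each $\mathfrak{p}\mid\mathfrak{a}$ we have $K_{\mathfrak{p}}\cong\Sigma_{\mathfrak{p}}$. Proposition~\ref{prop: 1-1 correspondence for G-structured algebras} and the remark then show that $I_{\mathfrak{p}}$ is generated by an element of $\Omega$. These primes lie in $T$, so they are tamely ramified, and there are $\omega(\mathfrak{a})=\gamma$ of them. By the second clause of the solution condition no other prime has this property, so the count is exactly $\gamma$.
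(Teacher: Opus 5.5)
Your approach matches the paper's. Its proof notes that trivial $\operatorname{Stab}_G(1)$ makes every $G$-extension a Galois $G$-extension and calls the rest a translation of the definitions. You carry out that translation, and you correctly observe that the condition $\rho_{\mathfrak{p}}(y^{t}_{\mathfrak{p}})\in\Omega$ is invariant under $G$-conjugation and under change of topological generator.

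However, one step fails as written. In the forward direction you assert that the $\gamma$ tamely ramified primes whose inertia is generated by an element of $\Omega$ are ``all lying in $T$''. Tame ramification does not imply $\mathfrak{p}\nmid\lvert G\rvert$. Take $G=S_3$, $\Omega=\{(123),(132)\}$ and the Galois closure of $\mathbb{Q}(\sqrt[3]{2})$. The prime $2$ is tamely ramified there with inertia $A_3$, generated by a $3$-cycle. But $2\mid 6$, so $2\notin T_{\mathbb{Q}}(6)$ and $(2)\notin I_T^{+,\mu}$. Your $\Sigma$ is then not an element of $\Phi(\Omega,1)$, and the statement read literally is false for this field.

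The repair is to read ``tamely ramified primes'' as ``primes $\mathfrak{p}\in T=T_k(\lvert G\rvert)$'', which are automatically tame, and to restrict your second clause to primes of $T$ as well. This is how the paper actually uses the lemma: ``odd ramified primes'' for quadratic fields, and ``totally ramified primes other than $2$ and $3$'' for cubic fields. If your clause ranges over all tame primes, a field like the one above lies in no $\mathcal{C}_{\Omega}^{\gamma}$ at all. The applications need every field to sit in the $\mathcal{C}_{\Omega}^{\gamma}$ indexed by its number of $\Omega$-primes in $T$. They combine the bound $\operatorname{rk}_p\operatorname{Cl}_L\geq\#\{q\in T:\rho_q(y_q^t)\in\Omega\}-c$ with counts of $\mathcal{C}_{\Omega}^{\gamma}$.

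You should also say plainly that your second clause replaces Definition~\ref{def: restricted ramification} rather than reading it. That definition literally requires a solution to be unramified outside the primes where $\Sigma$ is specified. Under it, $\mathcal{C}_{\Omega}^{\gamma}$ would contain only extensions unramified outside the primes dividing $\mathfrak{a}$ (and infinity), and the lemma would fail for any field with additional ramification.

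With your clause built in, the converse direction is circular by construction. That is acceptable only because the lemma is in effect the intended definition of $\mathcal{C}_{\Omega}^{\gamma}$, which is exactly what the paper's one-line proof tacitly assumes. State that explicitly instead of claiming consistency with the definition.
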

    \begin{proof}
    	Note that $G$ has the property that $\operatorname{Stab}_{G}(1)$ is trivial.
    	So a $G$-extension $K/k$ is Galois with Galois group isomorphic to $G$, and $\mathcal{C}(G,k)$ is the set of Galois $G$-extensions.
    	Then the statement is just a translation of the definition, so we omit the details.
    \end{proof}	
    \begin{example}
    	Just for an example, let $G=\langle\sigma\mid\sigma^{2}=e_{G}\rangle\cong\mathbb{Z}/2\mathbb{Z}$, and $\Omega=\{\sigma\}$.
    	Define $\mathcal{C}:=\mathcal{C}(G,\mathbb{Q})$ to be the set of quadratic number fields.
    	And for each non-negative integer $\gamma$, the set $\mathcal{C}_{\Omega}^{\gamma}$ is the set of quadratic number fields $K/\mathbb{Q}$ such that there are exactly $\gamma$ odd ramified primes in the extension.
    \end{example}
    In the rest of this section, we define the following notations.
    \begin{definition}\label{def: notations of S5}
    	Recall that $G$ is a finite transitive permutation group with $\operatorname{Stab}_{G}(1)$ trivial and the structure~(\ref{eqn: s.e.s. of the Galois group}).
    	\begin{enumerate}
    		\item Clearly $G$ is solvable, and we have the following tower
    		\begin{equation*}
    			\{e_G\}=G_{l}\triangleleft N=G_{l-1}\triangleleft\cdots\triangleleft G_{0}=G
    		\end{equation*}
    		such that $G_{i+1}$ is normal in $G_i$ and $G_i/G_{i+1}$ is abelian for each integer $0\leq i\leq l-1$.
    		\item Define $\mathcal{C}:=\{K\in\mathcal{C}(G)\mid\mu(K)=\{\pm1\}\}$ where $\mu(K)$ is the group of roots of unity.
    		\item Let $c_G:G\to\mathbb{Z}_{\geq0}$ be a weight such that $\Omega:=c_{G}^{-1}(m)\subseteq N$.
    		Define
    		\begin{equation*}
    			m:=\min_{e_G\neq g\in G}\{c_G(g)\},\, M:=\max_{e_G\neq g\in G}\{c_G(g)\}.
    		\end{equation*}
    		Define $C:\mathcal{C}\to\mathbb{Z}_+$ to be a generalized discriminant with respect to $c_G$.
    	\end{enumerate}    	
    \end{definition}
    Applying the Theorem~\ref{thm: restricted ramification for solvable extensions} here, we could prove some upper bound estimate for the following statistical objects.
    \begin{proposition}\label{prop: moment for solvable extensions} 
    	Let $G,\mathcal{C},C$ be as in Definition~\ref{def: notations of S5}.
    	\begin{enumerate}
    		\item There exists some positive integer $a$ such that
    		\begin{equation*}
    			N_{\mathcal{C},C}(X)\ll X^{1/m}(\log X)^{a-1}.
    		\end{equation*}
    		\item For each $0<\epsilon<1$, we have
    		\begin{equation*}
    			N_{\mathcal{C}_{\Omega}^{0},C}(X)\ll X^{1/(m'-\epsilon)},
    		\end{equation*}
    		where $m':=\min\{c_{G}(g)\mid g\notin\{e_{G}\}\cup\Omega\}$.
    		And for each positive integer $\gamma$, we have
    		\begin{equation*}
    			N_{\mathcal{C}_{\Omega}^{\gamma},C}(X)\ll\frac{X^{1/m}}{\log X}(\log\log X)^{\gamma-1}.
    		\end{equation*}
    	\end{enumerate}
    \end{proposition}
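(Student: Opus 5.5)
We bound $N_{\mathcal{C},C}(X)$ by summing, over all admissible collections of local data, the number of global solutions with that data. The count of solutions for fixed local data is controlled by Theorem~\ref{thm: restricted ramification for solvable extensions}.

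\textbf{Step 1: fibre by tame ramification data.} For $K\in\mathcal{C}$ with $C(K)<X$, write $C(K)=C_{\mathrm{w}}(K)\prod_{\mathfrak{p}\nmid\lvert G\rvert\infty}p^{c_G(y_p)}$, where $C_{\mathrm{w}}$ collects the finitely many factors at $p\mid\lvert G\rvert\infty$. The specifications at primes dividing $\lvert G\rvert\infty$ range over a finite set, independent of $X$, so $C_{\mathrm{w}}$ takes only finitely many values. Away from $\lvert G\rvert$ the ramification is tame. The local specification at a tamely ramified $p$ is determined, up to finitely many choices bounded in terms of $G$, by the conjugacy class of the pair (Frobenius, inertia generator).

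Hence the local data of $K$ is encoded by a squarefree $q=\prod p$ coprime to $\lvert G\rvert$, together with a labelling of each $p\mid q$ by a class $[g_p]$, $g_p\neq e_G$, up to $\sim$, and boundedly many further choices. The labelling must satisfy $\prod_{p\mid q}p^{c_G(g_p)}<X/C_{\mathrm{w}}$.

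For each such datum $\Sigma$, Theorem~\ref{thm: restricted ramification for solvable extensions} gives
\[
\#\mathcal{C}(\Sigma)\le\prod_i c_i d_{K_i}^{N_i}.
\]
This is the main obstacle: the bound is polynomial in the discriminants $d_{K_i}\le d_K$, and $d_K$ is a power of $q$, so naively it destroys the $X^{1/m}$ exponent. I would handle this as follows.

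\textbf{Step 2: absorb the class group factors.} Since $K\in\mathcal{C}$ has $\mu(K)=\{\pm1\}$ and $N$ is a $p$-group with $p\nmid\lvert H\rvert$, I would split the count along $1\to N\to G\to H\to1$.

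The $H$-layer contributes a fixed finite number of fields for each fixed set of primes ramified in $K^N$. These primes are exactly those whose inertia lies outside $N$. Because $c_G$ is minimal only on $N$, their contribution to $C$ carries the larger exponent $\ge m'$.

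For the $N$-layer over a fixed $H$-field $L$, I would use Lemma~\ref{lemma: restricted ramification for abelian extensions} (via Theorem~\ref{thm: restricted ramification for solvable extensions}, which is only needed for the final $N$-step). This bounds the number of $N$-extensions with given local data by $\lvert\operatorname{Hom}(\operatorname{Cl}_L,N)\rvert\ll d_L^{N_0}$.

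So the polynomial loss only involves $d_L$, which is a power of the primes with inertia outside $N$. In the sum these primes appear as $p^{c_G(g_p)}$ with $c_G(g_p)\ge m'>m$, so a factor $d_L^{N_0}$ is harmless against $X^{1/m}$ once summed with a power saving. If this is too weak, I would fall back on treating $\prod_i c_i d_{K_i}^{N_i}$ as $\ll X^{\epsilon}$ on the relevant range and accept the $\epsilon$-loss in part (2) (note the statement there indeed has $m'-\epsilon$). For part (1) and the $\gamma\ge1$ case of part (2), a $\log$-power precision is needed, so the absorption must be exact, via the fibring over $L$.

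\textbf{Step 3: the Tauberian/combinatorial counts.} After Step 2, the count reduces to sums of the form
\[
\sum_{L}d_L^{N_0}\cdot\#\Bigl\{q'=\prod_{p\mid q'}p\ \text{squarefree}\ :\ q'^{m}<X/C(L)\Bigr\}\cdot(\text{number of labellings}).
\]
Here $q'$ is the product of primes with inertia in $\Omega$ (weight exactly $m$), and primes with inertia of weight $>m$ are absorbed into the outer sum.

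For (1), the number of labellings is at most $a^{\omega(q)}$ with $a$ the number of $\sim$-classes of weight-$m$ elements (times local multiplicities). Then $\sum_{q\le Y}a^{\omega(q)}\ll Y(\log Y)^{a-1}$ with $Y=X^{1/m}$, and the outer sum over higher-weight primes converges as a Dirichlet series at $s=1/m$.

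For (2) with $\gamma=0$, only weights $\ge m'$ occur. A Rankin-type bound $\sum_{n}n^{-(1/(m'-\epsilon))}\cdots$ then gives $X^{1/(m'-\epsilon)}$.

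For $\gamma\ge1$, $q'$ has exactly $\gamma$ prime factors. The Landau bound
\[
\#\{q\le Y:\omega(q)=\gamma\}\ll\frac{Y(\log\log Y)^{\gamma-1}}{\log Y}
\]
with $Y\asymp X^{1/m}$, combined with the convergent outer sum, yields the stated estimate.
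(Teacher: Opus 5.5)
Your skeleton matches the paper's. You fibre over local specifications and bound each fibre by Theorem~\ref{thm: restricted ramification for solvable extensions}. You note that the loss $\prod_i c_i d_{K_i}^{N_i}$ only involves subfields of $K^N$, hence only tame primes whose inertia lies outside $N$, which have weight at least $m'$. You then finish with Euler-product counts.

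The gap is the step you call the main obstacle and then pass over: ``a factor $d_L^{N_0}$ is harmless against $X^{1/m}$ once summed with a power saving''. Write the fibre bound as $\#\mathcal{C}(\Sigma)\le c\operatorname{rad}(d_{K^N})^{\kappa}$, with $\kappa>0$ fixed by the Minkowski bound. A tame prime with inertia $g\notin N$ contributes $p^{\kappa-c_G(g)s}$ to your majorant. So at $s=1/m$ the outer product is $\prod_p\bigl(1+O(p^{\kappa-c_G(g)/m})\bigr)$, which converges only when $c_G(g)>m(\kappa+1)$. The hypotheses give only $c_G(g)\ge m'>m$, and possibly $m'=m+1$. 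The bound your method then delivers has exponent $\max\{1/m,(\kappa+1)/\min_{g\notin N}c_G(g)\}$, which can exceed $1/m$.

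Your fallback is false. $\prod_i c_id_{K_i}^{N_i}$ is not $\ll X^{\epsilon}$, because when all tame ramification has inertia outside $N$ one has $d_{K^N}\gg C(K)^{1/M}$. The same loss breaks your Rankin step for $\gamma=0$: you get roughly $X^{(\kappa+1)/m'}$, not $X^{1/(m'-\epsilon)}$. Finally, the claim that the $H$-layer contributes boundedly many fields per ramification set is false for nonabelian $H$. That is another polynomial loss of the same type.

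You cannot borrow a fix from the paper; as far as I can see, its device does not close this gap either. The paper majorizes by the coefficients of $c\prod_p\bigl(1+ap^{-ms}+ap^{\kappa}\sum_{i=\kappa+m+1}^{M}p^{-is}\bigr)$, writing $\kappa$ for its exponent $N$, under the extra assumption $c_G(g)-\kappa>m$. It removes that assumption by replacing $C$ with $C^r$. But the condition that matters is scale-invariant: at the new abscissa $s=1/(rm)$, a prime with inertia $g\notin N$ still contributes $p^{\kappa-c_G(g)/m}$, independently of $r$. Concretely, that majorant has coefficient $cap^{\kappa}$ at $n=p^{\kappa+m+1}$. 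Its partial sums are therefore $\gg X^{(\kappa+1)/(\kappa+m+1)}/\log X$, which exceeds $X^{1/m}$ as soon as $(m-1)\kappa>1$, and replacing $m$ by $rm$ only makes this worse.

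To prove (1) and (2) along these lines you need genuinely extra input. One option is a weight gap that is large compared with $\kappa$; for instance $\min_{g\notin N}c_G(g)>m'(\kappa+1)$ suffices for both parts, and then your Step~3 goes through essentially as written. The other is averaged control of $\lvert\operatorname{Hom}_H(\operatorname{Cl}_{K^N},N)\rvert$ and of the $H$-layer, of the kind postulated in Hypothesis~\ref{hypthesis}, fed in by partial summation as in Section~\ref{section: product of ramified primes}.
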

    \begin{proof}
    	The proof of (1) and (2) follows the similar method, and (1) is on the field-counting while (2) is counting a specific subfamily.
    	But first, the condition that $\Omega\subseteq G_{l-1}$ puts more restrictions on the structure of $G$, that is, $\Omega$ generates an abelian normal subgroup of $G$ included in $G_{l-1}$.
    	So, we may assume without loss of generality that $G_{l-1}=\langle\Omega\rangle$.
    	
    	As in the Theorem~\ref{thm: restricted ramification for solvable extensions}, for each $0\leq i\leq l-1$, let $c_i$ and $N_i$ be the constants from Lemma~\ref{lemma: restricted ramification for sovlable extensions} depending only on $n_{i}:=\lvert G/G_i\rvert$ (the base field $k$ is $\mathbb{Q}$ in this case) and $G_{i}/G_{i+1}$ such that for each number field $[L:\mathbb{Q}]=n_{i}$, we have
    	\begin{equation*}
    		\lvert\operatorname{Hom}(\operatorname{Cl}_{L},G_{i}/G_{i+1})\rvert\leq c_{i}d_{L}^{N_{i}}.
    	\end{equation*}
    	Note that $d_{K_{i}}\mid d_{K_{i+1}}$ for each integer $0\leq i\leq l-1$.
    	Let $c=\prod_{i=0}^{l-1} c_{i}$ and $N$ be a constant such that
    	\begin{equation}\label{eqn: moment for solvable extensions 1}
    		\prod_{i=0}^{l-1} c_id_{K_i}^{N_i}\leq c(\sqrt{d_{K_{l-1}}})^N,
    	\end{equation}
    	where $\sqrt{d_{K}}$ is the radical of the absolute discriminant.
    	In particular, for a subset $S$ of $\mathcal{P}$ containing the ones at $\lvert G\rvert\infty$, for each $G$-specification $\Sigma$ at $S$, we have
    	\begin{equation*}
    		\#\mathcal{C}(\Sigma)\leq c(\sqrt{d_{\Sigma_{l-1}}})^{N},
    	\end{equation*}
    	where $d_{\Sigma_{l-1}}=\prod_{p}d_{\Sigma_{l-1,p}}$, the product of the local discriminant.
    	Let $M:=\max_{g\in G}c_{G}(g)$, and $a$ be a positive integer such that for each $p\in\mathcal{P}$, we have
    	\begin{equation*}
    		\#\{\Sigma_{\mathfrak{p}}\text{ is a }G\text{-specification at }p\}\leq a.
    	\end{equation*}	
    	Claim: under the condition that for all $g\in G$ if $c_{G}(g)>m$ then $c_{G}(g)-N>m$, the series
    	\begin{equation*}
    		D(s):=c\prod_{p}(1+ap^{-ms}+ap^{N}\sum_{i=N+m+1}^{M}p^{-is})=\sum_{n=1}^{\infty}a_{n}n^{-s}
    	\end{equation*}
    	satisfies the property that for each $X>0$, we have
    	\begin{equation*}
    		N_{\mathcal{C},C}(X)\leq\sum_{n<X}a_{n}.
    	\end{equation*}
    	If the Claim is true, then by the author~\cite[Corollary 5.4]{wang2026invariant}, we know immediately that 
    	\begin{equation*}
    		N_{\mathcal{C},C}(X)\leq\sum_{n<X}a_{n}\ll X^{1/m}(\log X)^{a-1}.
    	\end{equation*}
    	Let us prove the Claim by comparing the coefficient directly.
    	Let $d$ be square-free, and $f$ be a positive integer such that if a prime $p\mid f$ then $p^{N+m+1}\mid f$.
    	Clearly for each $K/\mathbb{Q}\in\mathcal{C}$, its corresponding value $C(K)$ under the counting function is of the form $C(K)=d^{m}f$.
    	This shows that if $a_{n}=0$, then $\#\mathcal{C}(n)=0$, because every positive integer $n$ of the form $df$ corresponds to a nonzero coefficient $a_{n}$.
    	Conversely, if $a_{n}>0$, then there exists some $K/\mathbb{Q}$ such that $C(K)=n$.
    	In this case, we see that if $p\mid d_{K_{n-1}}$, then $p^{N+m+1}\mid C(K)$, hence $\sqrt{d_{K_{n-1}}}\mid\sqrt{f}$ where $\sqrt{}$ means the radical.
    	This implies that
    	\begin{align*}
    		a_{n}=&a^{\omega(n)}\cdot c(\sqrt{f})^{N} \\
    		\geq &\sum_{\Sigma:C(\Sigma)=n}c(\sqrt{f})^{N} \\
    		\geq &\sum_{\Sigma:C(\Sigma)=n}c(\sqrt{d_{\Sigma_{n-1}}})^{N} \\
    		\geq &\sum_{\Sigma:C(\Sigma)=n}\#\mathcal{C}(\Sigma),
    	\end{align*}
    	where $\omega(n)$ is the number of distinct prime factors of $n$.
    	So, for each $n\in\mathbb{Z}_{+}$, we have that
    	\begin{equation*}
    		a_{n}\geq\#\mathcal{C}(n),
    	\end{equation*}
    	hence the claim.
    	For general case when $c_{G}(g)-N\leq m$ if $g\notin\Omega\cup\{e_{G}\}$, let us adopt the similar method.
    	Let $r$ be a large enough positive integer such that $rc_{G}(g)-N>rm$ for all $g\notin\Omega\cup\{e_{G}\}$.
    	Then the generalized discriminant $C^{r}(K)=C(K)^{r}$ satisfies the condition the Claim.
    	So, there exists some positive integer $a$ such that
    	\begin{align*}
    		N_{\mathcal{C},C^{r}}(X)\ll& X^{\frac{1}{mr}}(\log X)^{a-1} \\
    		\Rightarrow N_{\mathcal{C},C}(X)=N_{\mathcal{C},C^{r}}(X^{r})\ll&X^{r\frac{1}{mr}}(\log X^{r})^{a-1}\ll X^{1/m}(\log X)^{a-1}.
    	\end{align*}
    	Finally we move on to the proof of (3), which is the estimate for $N_{\mathcal{C}_{\Omega}^{\gamma},C}(X)$.
    	Let $T:=T_{\mathbb{Q}}(\lvert G\rvert)$, that is, $T=\{p\in\mathcal{P}: p\nmid\lvert G\rvert\infty\}$.
    	
    	Claim: under the condition that for all $g\in G$ if $c_{G}(g)>m$ then $c_{G}(g)-N>m$, the series
    	\begin{align*}
    		E_{\gamma}(s):=& c\Bigl(\sum_{\Sigma_{\infty}}1\Bigr)
    		\prod_{p\mid\lvert G\rvert}\Bigl(\sum_{\Sigma_{p}}C_{p}(\Sigma_{p})^{-s}\Bigr)
    		\prod_{p\in T}(1+ap^{N}\sum_{i=N+m+1}^{M}p^{-is})
    		\Bigl(\sum_{
    			\substack{
    				d\in I_{T}^{+,\mu} \\
    				\omega(d)=\gamma
    			}
    		}a^{\gamma}d^{-ms}\Bigr) \\
    		=& \sum_{n=1}^{\infty}b_{\gamma,n}n^{-s}
    	\end{align*}
    	satisfies the property that
    	\begin{equation*}
    		N_{\mathcal{C}_{\Omega}^{\gamma},C}(X)\leq\sum_{n<X}b_{\gamma,n}.
    	\end{equation*}
    	If the Claim is true, then by combining the author~\cite[5.1, 5.2, 5.4]{wang2026invariant}, when $\gamma\geq1$, we have
    	\begin{equation*}
    		N_{\mathcal{C}_{\Omega}^{0},C}(X)\ll X^{1/(m'-N)}\quad\text{and}\quad N_{\mathcal{C}_{\Omega}^{\gamma},C}(X)\ll\frac{X^{1/m}}{\log X}(\log\log X)^{\gamma-1}.
    	\end{equation*}
    	Then we use the same trick as above for the general case and also the statement for $\gamma=0$.
    	To be precise, for each $0<\epsilon<1$, let $r$ be a large enough integer such that $rc_{G}(g)-N>rm$ and $N/r<\epsilon$.
    	Then for $\gamma\geq1$, we have
    	\begin{align*}
    		N_{\mathcal{C}_{\Omega}^{\gamma},C}(X)=& N_{\mathcal{C}_{\Omega}^{\gamma},C^{r}}(X^{r}) \\
    		\ll & \frac{X^{r/rm}}{\log X}(\log\log X)^{\gamma-1} \\
    		=& \frac{X^{1/m}}{\log X}(\log\log X)^{\gamma-1}.
    	\end{align*}
    	And for $\gamma=0$, we have
    	\begin{equation*}
    		N_{\mathcal{C}_{\Omega}^{0},C}(X)=N_{\mathcal{C}_{\Omega}^{0},C^{r}}(X^{r})\ll X^{\frac{r}{rm'-N}}=X^{\frac{1}{m'-N/r}}\leq X^{\frac{1}{m'-\epsilon}}.
    	\end{equation*}
    	It then suffices to show the Claim.
    	We consider the similar estimation.
    	For each positive integer $n$, the set $\mathcal{C}_{\Omega}^{\gamma}(n)$ is nonempty only if $\#\{p\in T:p^{m}\| n\}=\gamma$ and for each $p\mid n$ we have $p^{m}\mid n$.
    	Let us write $\prod_{p\in T:p^{m}\| n}p=d$.
    	Under such condition, we have
    	\begin{align*}
    		\#\mathcal{C}_{\Omega}^{\gamma}(n)\leq &\sum_{\Sigma:C(\Sigma)=n} c(\sqrt{d_{\Sigma_{l-1}}})^{N} \\
    		\leq & \sum_{\Sigma:C(\Sigma)=n}c(\sqrt{n/d^{m}})^{N} \\
    		\leq & c(\sqrt{n/d^{m}})^{N}\#\{\Sigma_{\infty}\}\prod_{p\mid\lvert G\rvert,p\mid n}\#\{\Sigma_{p}: C(\Sigma_{p})\|n\}\prod_{p\in T,p\mid n}\#\{\Sigma_{p}\} \\
    		\leq & c(\sqrt{n/d^{m}})^{N}\#\{\Sigma_{\infty}\}\prod_{p\mid\lvert G\rvert,p\mid n}\#\{\Sigma_{p}: C(\Sigma_{p})\|n\}\prod_{p\in T,p\mid n}a \\
    		=& b_{\gamma,n}.
    	\end{align*}
    	So the Claim is true, hence the proof of (2).
    \end{proof}
    Now we could prove the main result on the field-counting as follows.
    \begin{theorem}\label{thm: proof of the hypothesis S5}
    	Let $G,\mathcal{C},C$ be as in Definition~\ref{def: notations of S5}.
    	Then for each non-negative integer $\gamma$, we have
    	\begin{equation*}
    		N_{\mathcal{C}_{\Omega}^{\gamma},C}(X)=o(N_{\mathcal{C}_{\Omega}^{\gamma+1},C}(X)).
    	\end{equation*}
    	In particular, if $\gamma>0$, then
    	\begin{equation*}
    		N_{\mathcal{C}_{\Omega}^{\gamma},C}(X)\asymp\frac{X^{1/m}}{\log X}(\log\log X)^{\gamma-1}.
    	\end{equation*}
    \end{theorem}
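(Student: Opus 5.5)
Since Proposition~\ref{prop: moment for solvable extensions}(2) already gives the upper bounds
\begin{equation*}
N_{\mathcal{C}_{\Omega}^{0},C}(X)\ll X^{1/(m'-\epsilon)},\qquad N_{\mathcal{C}_{\Omega}^{\gamma},C}(X)\ll\frac{X^{1/m}}{\log X}(\log\log X)^{\gamma-1}\quad(\gamma\geq1),
\end{equation*}
the plan is to prove a matching lower bound
\begin{equation*}
N_{\mathcal{C}_{\Omega}^{\gamma},C}(X)\gg\frac{X^{1/m}}{\log X}(\log\log X)^{\gamma-1}\quad\text{for every }\gamma\geq1.
\end{equation*}
Both assertions of the theorem then follow. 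The $\asymp$ statement is immediate. The $o(\cdot)$ statement is a comparison of growth rates: for $\gamma\geq1$ the ratio of the bounds at $\gamma$ and $\gamma+1$ is $(\log\log X)^{-1}\to0$. For $\gamma=0$ we use $m'>m$ and pick $\epsilon$ with $1/(m'-\epsilon)<1/m$, so that $X^{1/(m'-\epsilon)}=o\bigl(X^{1/m}/\log X\bigr)$.

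For the lower bound I would construct many fields explicitly. Fix one field $K_0\in\mathcal{C}$ with $\mu(K_0)=\{\pm1\}$, or, more conveniently, fix the $H$-extension $F=K_0^{N}$ with $H\cong G/N$. Since $\gcd(\lvert N\rvert,\lvert H\rvert)=1$, Schur--Zassenhaus splits the extension $G=N\rtimes H$. Because $\Omega\subseteq N$ is nonempty, choose $g\in\Omega$; it generates a cyclic $p$-subgroup. The idea is to twist by the $N$-part: for a set of $\gamma$ primes $q_1<\dots<q_\gamma$ in $T$ with $q_i\equiv1\bmod \lvert N\rvert$ (more precisely, primes splitting completely in $F(\mu_{\lvert N\rvert})$), build an $N$-extension of $F$, Galois over $\mathbb{Q}$ with group $G$, that is ramified at exactly the primes above $q_1,\dots,q_\gamma$ (plus a fixed bounded set coming from $K_0$), with inertia generated by conjugates of $g$. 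The simplest route is to take $N'=\langle H\cdot g\rangle\subseteq N$, an $H$-submodule, and use the coprime action to reduce to embedding problems with abelian kernel. These are unobstructed locally by tameness and globally after allowing ramification at auxiliary primes, using Grunwald--Wang-type results or class field theory over $F$. Each such field has $C(K)\asymp (q_1\cdots q_\gamma)^{m}$ up to a bounded factor.

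Counting then reduces to counting square-free products of exactly $\gamma$ primes from a set of positive density (Chebotarev) with $q_1\cdots q_\gamma\ll X^{1/m}$. By Landau's theorem this count is $\asymp \frac{Y}{\log Y}(\log\log Y)^{\gamma-1}$ with $Y=X^{1/m}$. Distinct prime sets give non-isomorphic fields, and each field is counted at most boundedly often.

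The main obstacle is the construction step. The fields must have Galois group exactly $G$, satisfy $\mu(K)=\{\pm1\}$, and have no extra $\Omega$-ramified primes. My first attempt would be to fix $K_0$ and twist by a cocycle in $H^1(G_F,N)^{H}$ supported at the $q_i$. Surjectivity is kept by requiring the twist to be nontrivial on each $q_i$ and retaining $K_0$'s ramification. The roots-of-unity condition would be controlled by excluding the finitely many cyclotomic subfields of bounded degree, which only removes a negligible subfamily.
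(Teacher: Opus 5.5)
Your reduction is exactly the paper's proof. The paper combines the upper bounds of Proposition~\ref{prop: moment for solvable extensions}(2) with the lower bound $N_{\mathcal{C}_{\Omega}^{\gamma},C}(X)\gg X^{1/m}(\log X)^{-1}(\log\log X)^{\gamma-1}$ for every $\gamma\geq1$, which it simply quotes from \cite[Lemma 7.11]{wang2026invariant}. Your deduction of both assertions from these bounds is fine.

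The gap is in your own construction of the lower bound, which does not work as sketched. First, ``unobstructed globally after allowing ramification at auxiliary primes'' cannot be used here. Auxiliary primes in a Grunwald--Wang type argument depend on $(q_1,\dots,q_\gamma)$ and carry nontrivial inertia in $N$. Unless you fix them once and for all and keep their inertia out of $\Omega$ (which you do not arrange), each one causes one of two failures:
\begin{enumerate}
\item its inertia lies in $\Omega$, so the field lands in some $\mathcal{C}_{\Omega}^{\gamma'}$ with $\gamma'>\gamma$;
\item it multiplies $C(K)$ by an unbounded factor $\ell^{c_G(\cdot)}$, destroying $C(K)\asymp(q_1\cdots q_\gamma)^m$.
\end{enumerate}
What you need are $H$-equivariant $\chi:\operatorname{C}_F\to N$ ramified exactly above the $q_i$. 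Splitting completely in $F(\mu_{\lvert N\rvert})$ does not produce them: the tame local characters must also kill $\mathcal{O}_F^{\times}$, and the obstruction in $\operatorname{Ext}^1(\operatorname{Cl}_F,N)$ must vanish.

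One fix is to require the $q_i$ to split completely in $F(\mu_e,V^{1/e})$. Here $e$ is the exponent of $N$, and $V\subset F^{\times}$ is the $H$-stable group generated by $\mathcal{O}_F^{\times}$ and by generators of $\mathfrak{a}_j^{h_j}$, for fixed ideal-class representatives $\mathfrak{a}_j$ prime to the $q_i$. Then:
\begin{enumerate}
\item $V$ consists of local $e$-th powers above each $q_i$;
\item the local data extend to $\operatorname{C}_F$ with value $1$ on id{\`e}les representing the $\mathfrak{a}_j$;
\item averaging over $H$ (using coprimality) gives an $H$-equivariant $\chi$ with image in $\langle H\cdot g\rangle$.
\end{enumerate}
This is still a positive-density Chebotarev set, so your Landau count survives.

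Second, twisting the representation $\rho_0$ of a fixed $K_0\in\mathcal{C}$ keeps every $\Omega$-ramified prime of $K_0$ in $T$. So you land in $\mathcal{C}_{\Omega}^{\gamma+\gamma_0}$, and $\gamma_0=0$ can be impossible. Take $G=N=(\mathbb{Z}/p\mathbb{Z})^2$ with $p$ odd, $H$ trivial and $C$ the discriminant. Then $\Omega=N\setminus\{e_G\}$, and $\mathcal{C}_{\Omega}^{0}=\emptyset$ because the maximal abelian pro-$p$ extension of $\mathbb{Q}$ unramified outside $p$ has Galois group $\mathbb{Z}_p$. So your method never reaches $\gamma=1$, which the $\asymp$ claim needs.

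Similarly, ``nontrivial at each $q_i$ and retaining $K_0$'s ramification'' does not force image $G$ when $\langle H\cdot\Omega\rangle\neq N$. An unramified component of the twist modulo $\langle H\cdot\Omega\rangle$ can cancel that of $\rho_0$.

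Both problems are repaired by twisting a base that need not be surjective or lie in $\mathcal{C}$. The part of $N$ beyond $\langle H\cdot g\rangle$ should then be generated by fixed inertia that avoids $\Omega$ at primes of $T$. In the example, twist $(0,\chi_{p^2})$ by $(\chi_q,0)$, where $\chi_q,\chi_{p^2}$ have order $p$ and conductors $q,p^2$. Such a base still has to be constructed in general, and this is the part the paper avoids by quoting \cite[Lemma 7.11]{wang2026invariant}.

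Finally, $\mu(K)=\{\pm1\}$ needs an argument rather than a ``negligible subfamily'' assertion. It does follow once $\chi$ is ramified only above the $q_i$, with $q_i>\lvert G\rvert+1$ and the base unramified there. Suppose a character $\lambda$ of $G$ has $\lambda\circ(\chi\rho_0)$ cutting out a subfield of some $\mathbb{Q}(\zeta_\ell)$ with $\phi(\ell)\mid\lvert G\rvert$. Then $\lambda\circ\chi$ is everywhere unramified, hence trivial, so these cyclotomic subfields already lie in the base.
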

    \begin{proof}
    	By the author~\cite[Lemma 7.11]{wang2026invariant}, for each $\gamma\in\mathbb{Z}_{+}$, we have
    	\begin{equation*}
    		N_{\mathcal{C}_{\Omega}^{\gamma},C}(X)\gg \frac{{X}^{1/m}}{\log X}(\log\log X)^{\gamma-1}.
    	\end{equation*}
    	Apply the Proposition~\ref{prop: moment for solvable extensions} here, we immediately obtain the desired result.
    \end{proof}
    As for the moment of class groups, we have the following result.
    \begin{theorem}\label{thm: zero-probability and infinite moment S5}
    	Let $G,\mathcal{C},C$ be as in Definition~\ref{def: notations of S5}.
    	If $N$ is a $p$-group, then for each non-negative $r\in\mathbb{Z}$, we have
    	\begin{equation*}
    		\mathbb{P}_{\mathcal{C},C}(\operatorname{rk}_{p}\operatorname{Cl}_{K}\leq r)=0\quad\text{and}\quad \mathbb{E}_{\mathcal{C},C}(\lvert\operatorname{Hom}(\operatorname{Cl}_{K},\mathbb{Z}/p\mathbb{Z})\rvert)=+\infty.
    	\end{equation*}
    \end{theorem}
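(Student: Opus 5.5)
Two ingredients combine to give the result. The first is Theorem~\ref{thm: proof of the hypothesis S5}: almost every field in $\mathcal{C}$, counted by $C$, has many primes whose inertia is generated by an element of $\Omega$. The second is a genus-theory style lower bound, linear in that number of primes, for $\operatorname{rk}_p\operatorname{Cl}_K$ on such fields.

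\textbf{Step 1 (density).} Since $\mathcal{C}=\bigsqcup_{\gamma\geq0}\mathcal{C}_{\Omega}^{\gamma}$, for each fixed $R$ we have
\[
N_{\mathcal{C},C}(X)\geq N_{\mathcal{C}_{\Omega}^{R+1},C}(X)
\]
and, by iterating Theorem~\ref{thm: proof of the hypothesis S5},
\[
\sum_{\gamma\leq R}N_{\mathcal{C}_{\Omega}^{\gamma},C}(X)=o\bigl(N_{\mathcal{C}_{\Omega}^{R+1},C}(X)\bigr).
\]
Hence the proportion of $K\in\mathcal{C}$ with $C(K)<X$ having at most $R$ $\Omega$-ramified primes tends to $0$, for every fixed $R$.

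\textbf{Step 2 (rank bound).} I aim to show that there is a constant $b$, depending only on $G$, such that every $K\in\mathcal{C}_{\Omega}^{\gamma}$ satisfies
\[
\operatorname{rk}_p\operatorname{Cl}_K\geq\gamma-b.
\]
Let $K_{l-1}=K^N$, so that $K/K_{l-1}$ is an abelian $N$-extension with $N$ a $p$-group. Each of the $\gamma$ primes $q$ has inertia generated by some $g\in\Omega\subseteq N$. Such a $q$ is therefore unramified in $K_{l-1}$ and tamely ramified in $K/K_{l-1}$ with nontrivial $p$-power inertia. The intended route is to produce many unramified $\mathbb{Z}/p$-extensions of $K$ by genus theory, in either of two ways.
\begin{itemize}
\item One option is to take the genus field of $K/K_{l-1}$. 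By class field theory applied to the id\`ele sequence from Lemma~\ref{lemma: restricted ramification for abelian extensions}, the maximal abelian extension of $K_{l-1}$ that is unramified over $K$ is governed by local inertia groups. Each ramified prime contributes a cyclic factor of order divisible by $p$, and the kernel of the resulting product map is cut down only by the global units of $K_{l-1}$.
\item Alternatively, one can work over $\mathbb{Q}$. For each such prime $q$ we have $q\equiv1\bmod p$, because the tame inertia of order divisible by $p$ forces $p\mid q-1$ (up to the action of Frobenius, using that $\mu(K)=\{\pm1\}$). This yields a cyclic degree-$p$ subfield $F_q\subseteq\mathbb{Q}(\zeta_q)$, and the composita $KF_{q_1}\cdots F_{q_j}$, restricted to suitably combined characters, give unramified $p$-extensions of $K$.
\end{itemize}
The defect $b$ is the unit rank of $K_{l-1}$ (or of $K$) together with $\operatorname{rk}_p N$-type contributions, and both are bounded in terms of $|G|$. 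The hypothesis $\mu(K)=\mu(\mathbb{Q})$ keeps the roots-of-unity part of the unit group from contributing an unbounded amount.

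\textbf{Step 3 (conclusion).} By Step 2, $\operatorname{rk}_p\operatorname{Cl}_K\leq r$ forces $K\in\mathcal{C}_{\Omega}^{\gamma}$ for some $\gamma\leq r+b$, so Step 1 gives $\mathbb{P}_{\mathcal{C},C}(\operatorname{rk}_p\operatorname{Cl}_K\leq r)=0$. For the moment, use $|\operatorname{Hom}(\operatorname{Cl}_K,\mathbb{Z}/p\mathbb{Z})|=p^{\operatorname{rk}_p}$. For every $R$, the proportion of fields with rank greater than $R$ tends to $1$, so the averaged sum is eventually at least about $p^{R}$; letting $R\to\infty$ shows the moment is $+\infty$.

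The main obstacle is Step 2: obtaining a rank lower bound that is uniform in $\gamma$ and whose constant does not depend on $K$. The key point is that the local reciprocity maps at the $\gamma$ primes have $p$-parts that are independent modulo the global units. I would first try the genus-field computation for $K/K_{l-1}$, bounding the $\mathbb{F}_p$-dimension of the image of $\mathcal{O}_{K_{l-1}}^{\times}$ by $[K_{l-1}:\mathbb{Q}]$, and use that $\gcd(|N|,|H|)=1$ so that taking $H$-invariants is exact on $p$-parts.
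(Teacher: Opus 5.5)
Your proposal follows the paper's proof: the paper combines Theorem~\ref{thm: proof of the hypothesis S5} with the bound $\operatorname{rk}_p\operatorname{Cl}_L\geq\#\{q:\,p\mid e(q,L/\mathbb{Q})\}-[L:\mathbb{Q}]^2$ quoted from \cite[Theorem 3.1]{wang2026invariant}, your Steps 1 and 3 are the deduction packaged in \cite[Theorem 3.14]{wang2026invariant}, and your genus-field computation for $K/K^{N}$ (inertia of order divisible by $p$ at the $\gamma$ primes, modulo the image of $\mathcal{O}_{K^N}^{\times}$, whose $p$-rank is at most $[K^N:\mathbb{Q}]$) is a valid self-contained replacement for that citation; it needs neither $\mu(K)=\{\pm1\}$, since the torsion of $\mathcal{O}_{K^N}^{\times}$ is cyclic, nor coprimality. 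Drop your second option, though: tame inertia of order divisible by $p$ only gives $p\mid\mathfrak{N}\mathfrak{q}-1$ for the prime $\mathfrak{q}$ of $K^N$ below $q$, not $q\equiv1\bmod p$, and for $q\not\equiv1\bmod p$ there is no degree-$p$ subfield of $\mathbb{Q}(\zeta_q)$; for example, $5$ is totally ramified in the cubic field defined by $x^3+x^2+2x+3$ (discriminant $-7\cdot5^2$), whose Galois closure lies in $\mathcal{C}$ when $c_G$ is minimal on $3$-cycles.
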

    \begin{proof}
    	This is a direct corollary of \cite[Theorem 3.14]{wang2026invariant}.
    	For each rational prime $q$, let $e(q,L/\mathbb{Q})$ be its ramification index in the Galois extension $L/\mathbb{Q}\in\mathcal{C}$.
    	According to the author~\cite[Theorem 3.1]{wang2026invariant}, we have 
    	\begin{align*}
    		\operatorname{rk}_{p}\operatorname{Cl}_{L}\geq&\#\{q\in\mathcal{P}\mid e(q,L/\mathbb{Q})\equiv 0 \bmod{p}\}-[L:\mathbb{Q}]^{2} \\
    		\geq&\#\{q\nmid\lvert G\rvert\infty:\rho_{q}(y_{q}^{t})\in\Omega\}-[L:\mathbb{Q}]^{2}.
    	\end{align*}
    	where $\rho_{q}:G_{\mathbb{Q}_{q}}\to G$ is the local map corresponding to $L_{p}$.
    	Let $T:=T_{\mathbb{Q}}(\lvert G\rvert)=\{p\in\mathcal{P}:p\nmid\lvert G\rvert\infty\}$.
    	We then see that $((\mathcal{C},C),\Omega,T)$ satisfies the condition of \cite[Theorem 3.14]{wang2026invariant}.
    	That is, there exists some constant $c\geq0$, for each $L\in\mathcal{C}$, we have
    	\begin{equation*}
    		\operatorname{rk}_{p}\operatorname{Cl}_{L}\geq\#\{q\in T:\rho_{q}(y_{q}^{t})\in\Omega\}-c,
    	\end{equation*} 
    	and for each non-negative integer $\gamma$, we have that
    	\begin{equation*}
    		N_{\mathcal{C}_{\Omega}^{\gamma},C}(X)=o(N_{\mathcal{C}_{\Omega}^{\gamma+1},C}(X)).
    	\end{equation*}
    	Then \cite[Theorem 3.14]{wang2026invariant} implies immediately the zero-probability and the infinite $\mathbb{Z}/p\mathbb{Z}$-moment.
    \end{proof}
    \begin{remark}
    	The Theorem~\ref{thm: solvable group with abelian normal subgroup S1} is proved directly by the combination of the Proposition~\ref{prop: solvable extension with a normal abelian subgroup} and the above Theorem~\ref{thm: zero-probability and infinite moment S5}.
    	So, we have done for one of the main results of this paper.
    \end{remark}
    Then let us show some applications of the Theorem~\ref{thm: zero-probability and infinite moment S5}.
    \begin{lemma}
    	Let $K/\mathbb{Q}$ be a Galois $\Gamma$-extension, where $\Gamma$ is a finite group.
    	Let $p$ be a finite rational prime such that $p\nmid\lvert\Gamma\rvert$.
    	If $e=e(p,K/\mathbb{Q})$ is the ramification index, and $f=f(p,K/\mathbb{Q})$ is the inertia degree, then
    	\begin{equation*}
    		v_{p}(d_{K})=\lvert\Gamma\rvert(1-e^{-1}),%
    	\end{equation*}
    	where $v_{p}:\mathbb{Z}\to\mathbb{N}\cup\{\infty\}$ is the normalized valuation at $p$ and $d_{K}$ is the (absolute) discriminant of $K$.
    \end{lemma}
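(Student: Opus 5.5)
The plan is to compute $v_p(d_K)$ locally and use that, since $p\nmid\lvert\Gamma\rvert$, the ramification at $p$ is tame. First I will use the local decomposition of the discriminant already used in Lemma~\ref{lemma: local specification in field extensions}(3):
\begin{equation*}
	v_p(d_K)=\sum_{\mathfrak{P}\mid p}v_p\bigl(\mathfrak{N}\mathfrak{d}_{K_{\mathfrak{P}}/\mathbb{Q}_p}\bigr).
\end{equation*}
Because $K/\mathbb{Q}$ is Galois, all primes $\mathfrak{P}$ above $p$ are conjugate. They therefore share the same ramification index $e$ and residue degree $f$, and their number is $g=\lvert\Gamma\rvert/(ef)$. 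Equivalently, Theorem~\ref{thm: structure of Galois algebras} gives $K\otimes\mathbb{Q}_p\cong\operatorname{Ind}^{\Gamma}_{\Gamma_{\mathfrak{P}}}K_{\mathfrak{P}}$, a product of $g$ isomorphic copies of the local field $K_{\mathfrak{P}}$. So it suffices to compute the contribution of a single local field $K_{\mathfrak{P}}/\mathbb{Q}_p$ of degree $ef$ and multiply by $g$.

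For the local computation, I will factor $K_{\mathfrak{P}}/\mathbb{Q}_p$ as an unramified extension $K_0/\mathbb{Q}_p$ of degree $f$ followed by a totally ramified extension $K_{\mathfrak{P}}/K_0$ of degree $e$.
\begin{itemize}
	\item The unramified step has unit discriminant.
	\item Since $p\nmid\lvert\Gamma\rvert$ and $e$ divides $\lvert\Gamma\rvert$, we have $p\nmid e$, so the totally ramified step is tame.
	\item For a tame totally ramified extension of degree $e$, the different is $\mathfrak{P}^{e-1}$. One sees this by writing $K_{\mathfrak{P}}=K_0(\pi)$ with $\pi$ a root of an Eisenstein polynomial $h$; then $v_{\mathfrak{P}}(h'(\pi))=v_{\mathfrak{P}}(e\pi^{e-1}+\cdots)=e-1$ because $e$ is a unit.
\end{itemize}
By transitivity of the different, the different of $K_{\mathfrak{P}}/\mathbb{Q}_p$ is $\mathfrak{P}^{e-1}$. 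Its norm to $\mathbb{Q}_p$ then has valuation $f(e-1)$, since $\mathfrak{N}\mathfrak{P}=p^{f}$.

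Multiplying by the number of primes above $p$ gives
\begin{equation*}
	v_p(d_K)=g\,f(e-1)=\frac{\lvert\Gamma\rvert}{ef}\,f(e-1)=\lvert\Gamma\rvert(1-e^{-1}),
\end{equation*}
which is the claimed formula.

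The only step with real content is the tame different computation. The point to check carefully is that $p\nmid\lvert\Gamma\rvert$ forces $p\nmid e$, so that the differentiated Eisenstein polynomial has valuation exactly $e-1$ and no wild correction term appears.
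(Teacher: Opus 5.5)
Your proof is correct and takes essentially the same route as the paper. You split $v_p(d_K)$ over the $g=\lvert\Gamma\rvert/(ef)$ conjugate primes above $p$, use that the tame different is $\mathfrak{P}^{e-1}$ with norm of valuation $f(e-1)$, and conclude with $efg=\lvert\Gamma\rvert$; the only difference is that you derive the tame different from an Eisenstein polynomial over the maximal unramified subextension, whereas the paper simply cites Neukirch~(3.2.6).
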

    \begin{proof}
    	By Neukirch~\cite[(3.2.6)]{neukirch2013algebraic}, if $\mathfrak{p}$ is a finite prime of $K$ that is tamely ramified in $K/\mathbb{Q}$, then we have
    	\begin{equation*}
    		\mathfrak{D}_{\mathfrak{p}}=\mathfrak{D}(K_{\mathfrak{p}}/\mathbb{Q}_{p})=\mathfrak{p}^{e-1},%
    	\end{equation*}
    	where $\mathfrak{D}$ is the different and $e$ is the ramification index of $\mathfrak{p}$ which is the same as the ramification index of $p$.
    	The basic relation between the different and the discriminant is given by
    	\begin{equation*}
    		d_{\mathfrak{p}}=d(K_{\mathfrak{p}}/\mathbb{Q}_{p})=\mathfrak{ND}_{\mathfrak{p}},%
    	\end{equation*}
    	where $\mathfrak{N}=\operatorname{Nm}_{K/\mathbb{Q}}$.
    	See Neukirch~\cite[(3.2.9), (3.2.11)]{neukirch2013algebraic} for example.
    	Therefore, if $p$ has the splitting $p\mathcal{O}_{K}=(\mathfrak{p}_{1}\cdots\mathfrak{p}_{l})^{e}$ in $K/\mathbb{Q}$, then we have
    	\begin{align*}
    		d(K_{p}/\mathbb{Q}_{p})=&\prod_{i=1}^{l}d_{\mathfrak{p}_{i}}\\%
    		=&\prod_{i=1}^{l}\mathfrak{ND}_{\mathfrak{p}_{i}}(K/\mathbb{Q})\\%
    		=&(p^{f})^{(e-1)l}.%
    	\end{align*}
    	Recall the fundamental identity: $efl=\lvert\Gamma\rvert$.
    	So, we have
    	\begin{equation*}
    		fl(e-1)=\lvert\Gamma\rvert(1-e^{-1}),%
    	\end{equation*}
    	hence the statement.
    \end{proof}
    This computation shows immediately the following.
    \begin{corollary}
    	Fix a finite group $\Gamma$.
    	If there exists at least one Galois $\Gamma$-extension $K/\mathbb{Q}$, then as a generalized discriminant, the (absolute) discriminant corresponds to the weight $c_{\Gamma}:\Gamma\to\mathbb{Z}_{\geq0}$ such that for each $g\neq e_{\Gamma}$ we have
    	\begin{equation*}
    		c_{\Gamma}(g)=\lvert\Gamma\rvert(1-r_{g}^{-1}),%
    	\end{equation*}
    	where $r_{g}$ is the order of $g$ in $\Gamma$.
    \end{corollary}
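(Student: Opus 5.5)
The plan is to check directly that the absolute discriminant $K\mapsto d_K$ on Galois $\Gamma$-extensions has the shape required by Definition~\ref{def: generalized discriminant}. Concretely, I will show three things: that it is a product of local factors $C_p$ depending only on the local specification $K_p=K\otimes\mathbb{Q}_p$; that for $p\nmid\lvert\Gamma\rvert\infty$ this factor equals $p^{c_\Gamma(y_p)}$, with $y_p$ an inertia generator; and that $c_\Gamma(g)=\lvert\Gamma\rvert(1-r_g^{-1})$ is a weight in the sense of that definition. The hypothesis that a Galois $\Gamma$-extension exists is only there so that the set $\mathcal{C}(\Gamma)$ being ordered is nonempty and the statement has content.

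First, the weight axioms. We have $c_\Gamma(g)=0$ if and only if $r_g=1$, i.e.\ $g=e_\Gamma$. If $g_1\sim g_2$, then $g_1=hg_2^ah^{-1}$ and $g_2=h^{-1}g_1^bh$. Hence $g_1$ lies in a conjugate of $\langle g_2\rangle$ and $g_2$ lies in a conjugate of $\langle g_1\rangle$, so $r_{g_1}\mid r_{g_2}$ and $r_{g_2}\mid r_{g_1}$. Thus the orders agree and $c_\Gamma(g_1)=c_\Gamma(g_2)$.

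Second, the local factorization. I will write $d_K=\prod_p p^{v_p(d_K)}$ and use that $v_p(d_K)$ is the valuation of $d(K_p/\mathbb{Q}_p)=\prod_{\mathfrak{p}\mid p}d_{\mathfrak{p}}$, exactly as in Lemma~\ref{lemma: local specification in field extensions}(3). This quantity depends only on the isomorphism class of the Galois $(\Gamma,\mathbb{Q}_p)$-algebra $K_p$: by Theorem~\ref{thm: structure of Galois algebras}, $K_p\cong\operatorname{Ind}^{\Gamma}_{\Gamma_e}eK_p$, and the structure of the components $eK_p$ determines the local discriminant. For $p\mid\lvert\Gamma\rvert\infty$ I simply define $C_p(\Sigma_p)$ to be this local discriminant (and $1$ at infinity); the definition allows an arbitrary function at these places.

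Third, the tame primes $p\nmid\lvert\Gamma\rvert$. Here the preceding lemma gives $v_p(d_K)=\lvert\Gamma\rvert(1-e^{-1})$, where $e$ is the ramification index. The inertia group is cyclic of order $e$, generated by $y_p=\rho_p(y_p^t)$ (Definition~\ref{def: ramification of Galois algebra} and Proposition~\ref{prop: generalized discriminant of morphisms}), so $e=r_{y_p}$ and $C_p(K_p)=p^{c_\Gamma(y_p)}$. In the unramified case $y_p=e_\Gamma$, and both sides equal $1$.

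Finally, finiteness of $\{K : d_K<X\}$ is Hermite's theorem. The only step needing care is the second one, namely that the local discriminant is an invariant of the $\Gamma$-algebra isomorphism class. I expect this to be routine given Proposition~\ref{prop: isomorphisms of G-algebras}: isomorphic Galois algebras have isomorphic components, and isomorphic components have equal discriminants.
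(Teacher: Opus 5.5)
Your proposal is correct and follows the paper's route. The paper reads the corollary off directly from the preceding lemma $v_p(d_K)=\lvert\Gamma\rvert(1-e^{-1})$ for $p\nmid\lvert\Gamma\rvert$, with $e$ equal to the order of the inertia generator; your additional checks (the weight axioms via equality of orders under $\sim$, arbitrary local factors at $p\mid\lvert\Gamma\rvert\infty$, and Hermite finiteness) just make explicit what the paper leaves implicit.
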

    Now that we have established a simple relation between the absolute discriminant and the weight function, the Theorem~\ref{thm: zero-probability and infinite moment S5} could be applied to the following cases.
    \begin{proposition}\label{prop: applications and examples S5}
    	Keep the notations as in (\ref{eqn: s.e.s. of the Galois group}) and the Definition~\ref{def: notations of S5}.
    	If $p$ is the smallest prime dividing $\lvert G\rvert=\lvert N\rvert\times\lvert H\rvert$, then for each non-negative integer $\gamma$, we have
    	\begin{equation*}
    		N_{\mathcal{C}_{\Omega}^{\gamma},d}(X)=o(N_{\mathcal{C}_{\Omega}^{\gamma+1},d}(X)),%
    	\end{equation*}
    	where $d$ is the absolute discriminant.
    	In particular, if $\gamma>0$, then
    	\begin{equation*}
    		N_{\mathcal{C}_{\Omega}^{\gamma},d}(X)\asymp\frac{X^{1/m}}{\log X}(\log\log X)^{\gamma-1},%
    	\end{equation*}
    	where $m=\lvert G\rvert(1-p^{-1})$.
    	Moreover, we have
    	\begin{equation*}
    		\forall r\geq0,\,\mathbb{P}_{\mathcal{C},d}(\operatorname{rk}_{p}\operatorname{Cl}_{K}\leq r)=0%
    		\quad\text{and}\quad%
    		\mathbb{E}_{\mathcal{C},d}(\lvert\operatorname{Hom}(\operatorname{Cl}_{K},\mathbb{Z}/p\mathbb{Z})\rvert)=+\infty.
    	\end{equation*}
    \end{proposition}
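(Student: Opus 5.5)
The plan is to reduce the statement to Theorem~\ref{thm: proof of the hypothesis S5} and Theorem~\ref{thm: zero-probability and infinite moment S5}. The reduction consists of checking that the absolute discriminant is a generalized discriminant whose weight satisfies the hypotheses of Definition~\ref{def: notations of S5}. By the preceding Corollary, the discriminant corresponds to the weight
\[
c_G(g)=\lvert G\rvert(1-r_g^{-1}) \quad \text{for } g\neq e_G .
\]
At the primes dividing $\lvert G\rvert\infty$ the local factors are simply whatever the local discriminants are, and this is allowed by Definition~\ref{def: generalized discriminant}. This $c_G$ is invariant under conjugation and invertible powering, since both operations preserve the order $r_g$. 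So $c_G$ is a weight, and $d$ is a generalized discriminant with respect to it.

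Next I will identify the minimum $m$ and the set $\Omega=c_G^{-1}(m)$. The function $r\mapsto\lvert G\rvert(1-r^{-1})$ is strictly increasing in $r$, so $c_G$ is minimised exactly at the nontrivial elements of smallest order. The smallest order of a nontrivial element is the smallest prime dividing $\lvert G\rvert$, which is $p$; such elements exist by Cauchy. Hence
\[
m=\lvert G\rvert(1-p^{-1}) \quad\text{and}\quad \Omega=\{g\in G: r_g=p\}.
\]
Every element of order $p$ generates a $p$-subgroup, and so lies in a Sylow $p$-subgroup. Because $\gcd(\lvert N\rvert,\lvert H\rvert)=1$ and $N$ is a normal $p$-group, $N$ is the unique Sylow $p$-subgroup of $G$. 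Therefore $\Omega\subseteq N$, and each $g\in\Omega$ has $r_g\equiv 0\bmod p$. This is exactly the requirement $c_G^{-1}(m)\subseteq N$ in Definition~\ref{def: notations of S5}.

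The main subtlety is the remaining bookkeeping. First, $N$ must be a $p$-group: in the setup (\ref{eqn: s.e.s. of the Galois group}) it is an abelian $p$-group. If instead one starts from a general abelian normal $N$, I will replace it by its Sylow $p$-part, using Proposition~\ref{prop: solvable extension with a normal abelian subgroup}. Second, the family $\mathcal{C}$ is the one with $\mu(K)=\{\pm1\}$, matching Definition~\ref{def: notations of S5}. Third, the weight takes rational rather than integer values in general. This is harmless: scaling $c_G$ by a positive integer gives an equivalent generalized discriminant, and this rescaling only replaces $X$ by a power of $X$ in the counting statements.

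Once these checks are done, Theorem~\ref{thm: proof of the hypothesis S5} gives
\[
N_{\mathcal{C}_{\Omega}^{\gamma},d}(X)=o\bigl(N_{\mathcal{C}_{\Omega}^{\gamma+1},d}(X)\bigr),
\]
together with the $\asymp$ asymptotic, with $m=\lvert G\rvert(1-p^{-1})$. Theorem~\ref{thm: zero-probability and infinite moment S5} then gives the zero probability and the infinite $\mathbb{Z}/p\mathbb{Z}$-moment.
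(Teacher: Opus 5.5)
Your proposal is correct and follows essentially the same route as the paper: you identify the weight of the absolute discriminant as $c_G(g)=\lvert G\rvert(1-r_g^{-1})$, observe that it is minimised exactly at the elements of order $p$, note that these lie in the unique Sylow $p$-subgroup $N$, and then invoke Theorems~\ref{thm: proof of the hypothesis S5} and~\ref{thm: zero-probability and infinite moment S5}. One small correction: your rescaling step is unnecessary, since $r_g$ divides $\lvert G\rvert$ by Lagrange, so $c_G(g)=\lvert G\rvert-\lvert G\rvert/r_g$ is already an integer.
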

    \begin{proof}
    	It suffices to check that the absolute discriminant $d$ satisfies the condition of this section~(\ref{def: notations of S5}).
    	Since $p$ is the smallest prime that divides $\lvert G\rvert$, we see that if $g\in N$ has order $p$, then
    	\begin{equation*}
    		c_{G}(g)=\lvert G\rvert(1-p^{-1}),%
    	\end{equation*}
    	where $c_{G}$ is the weight that corresponds to the absolute discriminant.
    	Clearly, for all nontrivial elements $g\in G$, we have
    	\begin{equation*}
    		c_{G}(g)=\lvert G\rvert(1-r_{g}^{-1})\geq\lvert G\rvert(1-p^{-1}).
    	\end{equation*}
    	So it is true that $m=\lvert G\rvert(1-p^{-1})$, and only the elements of order $p$ could reach this minimum $m$.
    	According to our assumption on the group structure, this means that $\Omega=c_{G}^{-1}(m)\subseteq N$.
    	So, in this case, all the conditions of the Theorem~\ref{thm: proof of the hypothesis S5} and~\ref{thm: zero-probability and infinite moment S5} are satisfied.
    	And we are done.
    \end{proof}
    \begin{example}%
    	For an example, the author~\cite{wang2026invariant} has discussed $(6,A_{4})$-fields ordered by the absolute discriminant in details.
    	In particular, the group $A_{4}$ admits the following short exact sequence
    	\begin{equation*}
    		1\to V_{4}\to A_{4}\to \mathbb{Z}/3\mathbb{Z}\to1.%
    	\end{equation*}  
    	Also, this result is interesting in the following sense.
    	If $L/\mathbb{Q}$ is a Galois $A_{4}$-field, then we see that $L$ has a unique Galois cubic subfield $K$.
    	If we believe the (weak) Cohen-Lenstra Heuristics, or check the statistics, then for Galois cubic fields, the $\mathbb{Z}/2\mathbb{Z}$-moment of the class groups should be finite.
    	This indicates that the infinite $\mathbb{Z}/2\mathbb{Z}$-moment of the class groups for the Galois $A_{4}$-fields (or equivalently the $(6,A_{4})$-fields) comes from the $V_{4}$-extension $L/K$.
    	
    	For another example, or rather, a method of constructing concrete examples, we introduce the wreath product.
    	If $H$ has a permutation action on a finite set $S$, then for any abelian group $A$, define
    	\begin{equation*}
    		A\wr H:=A^{S}\rtimes H.%
    	\end{equation*}
    	Write $\bar{a}:=(a_{s})_{s\in S}\in A^{S}$.
    	The group law for the subgroup $A^{S}$ is just pointwise.
    	For each $(\bar{a}_{1},h_{1}),(\bar{a}_{2},h_{2})\in A\wr H$, the group law is given by
    	\begin{equation*}%
    		(\bar{a}_{1},h_{1})\cdot(\bar{a}_{2},h_{2}):=(\bar{a}_{1}\cdot(h_{1}\cdot\bar{a}_{2}),h_{1}h_{2}),%
    	\end{equation*}%
    	where the action of $H$ on $A$ is given by the permutation on the coordinates $s\in S$.
    	That is, if we write $\bar{b}=h\cdot\bar{a}$, then $b_{s}=a_{h^{-1}s}$, and we could abbreviate it as:
    	\begin{equation*}%
    		h\cdot\bar{a}:=(a_{h^{-1}s}).%
    	\end{equation*}%
    	Now let $p$ be a fixed rational prime.
    	If $H$ is a finite solvable group with a permutation action on $S$ such that its size coprime to all integers $\leq p$, then for any finite abelian $p$-group $A$, define $G:=A\wr H$.
    	It is routine to check that this construction of $G$ will produce a set-up that satisfies all the conditions in the Definition~\ref{def: notations of S5} and the Proposition~\ref{prop: applications and examples S5}.
    	And the set $\mathcal{C}$ of fields will satisfy the statistical properties in the Proposition~\ref{prop: applications and examples S5} directly.
    	
    	Following this method, let us give a concrete example.
    	Define
    	\begin{equation*}%
    		H:=\Biggl\{%
    		\begin{pmatrix}%
    			1&a&b\\%
    			0&1&c\\%
    			0&0&1
    		\end{pmatrix}%
    		\mid a,b,c\in\mathbb{F}_{5}
    		\Biggr\}.%
    	\end{equation*}%
    	It is clear that the upper triangle matrices with $1$s on the diagonal form a subgroup of $\operatorname{SL}_{3}(\mathbb{F}_{5})$ under the matrix multiplication.
    	Let $H$ act on itself by left multiplication to obtain the permutation action, and $A$ be the finite abelian $3$-group $\mathbb{Z}/9\mathbb{Z}$.
    	Then define $G:=A\wr H$, which is isomorphic to $A^{125}\times H$ as a set, and it satisfies the required properties from the Proposition~\ref{prop: applications and examples S5}.
    	And this is an example where the prime $p$ is taken to be odd, the group $A$ is not an elementary $p$-group, and the quotient group $H$ is not commutative.
    \end{example}%
    
    \section{Product of ramified primes}\label{section: product of ramified primes}
    In this section, we mainly deal with the case when the generalized discriminant is equivalent to the product of ramified primes.
    \begin{definition}\label{def: notations of S6}
    	Let $G$ be a finite permutation group with $\operatorname{Stab}_{G}(1)$ trivial that satisfies (\ref{eqn: s.e.s. of the Galois group}).
    	Define $\Omega:=N\backslash\{e_{G}\}$.
    	Define $\mathcal{C}:=\{L\in\mathcal{C}(G)\mid \mu(L)=\{\pm1\}\}$ where $\mu(L)$ is the group of roots of unity, and define $\mathcal{D}:=\{K=L^{N}\mid L\in\mathcal{C}\}$.
    \end{definition}
    We define a ``relative product of ramified primes'' specifically for the short exact sequence (\ref{eqn: s.e.s. of the Galois group}).
    The main idea is that if $L/\mathbb{Q}$ is a Galois $G$-field with $K=L^{N}$, then
    \begin{equation*}
    	C(L)=C(K)C(L/K).
    \end{equation*}
    However, following the language of the generalized discriminant requires some detailed study for the tower of subgroups/extensions.
    Let us construct $C(L/K)$ step-by-step.
    First of all, $L/K$ is Galois with Galois group isomorphic to $N$.
    But the field $L$ is more than just being Galois over $K$, it is also Galois over $\mathbb{Q}$.
    So, we have the following.
    \begin{proposition}\cite[Proposition 7.6]{wang2026invariant}
    	Fix a Galois $H$-field $K$.
    	Note that the id{\`e}les class group $C_{K}$ is a topological $H$-module.
    	There is a one-to-one correspondence between the following two sets
    	\begin{equation*}
    		\operatorname{Sur}_{H}(\operatorname{C}_{K},N)\leftrightarrow\{L\in\mathcal{C}(G)\mid K\subseteq L\},
    	\end{equation*}
    	where $\operatorname{Sur}_{H}$ means the surjective continuous group homomorphisms that is $H$-equivariant.
    \end{proposition}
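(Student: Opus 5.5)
The plan is to build the two maps explicitly via global class field theory over $K$, and use the coprimality $\gcd(\lvert N\rvert,\lvert H\rvert)=1$ to control the group-theoretic ambiguity. Throughout, "$K\subseteq L$" is read as: $L^{N}=K$, and the induced action of $G/N=H$ on $K$ is the given Galois $H$-structure of $K$. For $\sigma\in\operatorname{Gal}(K/\mathbb{Q})$, the $H$-action on $\operatorname{C}_K$ is $x\mapsto\sigma(x)$, and $N$ is an $H$-module by conjugation as in (\ref{eqn: s.e.s. of the Galois group}).

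\emph{From fields to homomorphisms.} Let $(L,\varphi)\in\mathcal{C}(G)$ contain $K$ in this sense. Then $L/K$ is abelian and $\varphi$ restricts to an isomorphism $\varphi|_N:N\to\operatorname{Gal}(L/K)$. The Artin map $\operatorname{art}_{L/K}:\operatorname{C}_K\to\operatorname{Gal}(L/K)$ is a continuous surjection, and we set $\chi_L:=\varphi|_N^{-1}\circ\operatorname{art}_{L/K}$. The key fact is the functoriality of reciprocity under conjugation: for $\tilde\sigma\in\operatorname{Gal}(L/\mathbb{Q})$ lifting $\sigma\in\operatorname{Gal}(K/\mathbb{Q})$,
\begin{equation*}
\operatorname{art}_{L/K}(\sigma x)=\tilde\sigma\,\operatorname{art}_{L/K}(x)\,\tilde\sigma^{-1}.
\end{equation*}
Transporting this through $\varphi$ shows that $\chi_L$ is $H$-equivariant. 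We then check that $\chi_L$ depends only on the isomorphism class of $(L,\varphi)$. An isomorphism of $G$-fields fixing the $H$-structure on $K$ amounts to replacing $\varphi$ by $\varphi\circ c_g$ (Corollary~\ref{cor: Galois G-extensions}) with $\bar g=1$ in $H$, so $g\in N$. Since $N$ is abelian, $c_g$ is trivial on $N$, and $\chi_L$ is unchanged.

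\emph{From homomorphisms to fields.} Let $\chi\in\operatorname{Sur}_H(\operatorname{C}_K,N)$. Its kernel is an open subgroup of finite index, and it is $\operatorname{Gal}(K/\mathbb{Q})$-stable by equivariance. By the existence theorem it cuts out an abelian extension $L/K$, and stability of the norm group under $\operatorname{Gal}(K/\mathbb{Q})$ makes $L/\mathbb{Q}$ Galois. Thus $\Gamma:=\operatorname{Gal}(L/\mathbb{Q})$ sits in an extension
\begin{equation*}
1\to N\to\Gamma\to H\to1,
\end{equation*}
where $N\cong\operatorname{Gal}(L/K)$ via $\chi\circ\operatorname{art}^{-1}$. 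The same conjugation formula shows that the induced action of $H$ on $N$ is the given $H$-module structure. Because $\gcd(\lvert N\rvert,\lvert H\rvert)=1$, we have $H^2(H,N)=0$, so this extension is equivalent to (\ref{eqn: s.e.s. of the Galois group}) (Schur--Zassenhaus). This yields an isomorphism $\varphi:G\to\Gamma$ that is the identity on $N$ and induces the given identification on $H$. Such $\varphi$ are parametrized by $Z^1(H,N)$, and $H^1(H,N)=0$ says they are all conjugate by elements of $N$. Hence $(L,\varphi)$ is a well-defined isomorphism class in $\mathcal{C}(G)$ containing $K$. We would also note that the condition $\mu(L)=\{\pm1\}$ plays no role in the correspondence, which is stated for all of $\mathcal{C}(G)$.

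\emph{Mutual inverses and the main obstacle.} One composite is the identity because $\operatorname{art}_{L/K}$ determines $L$ through its kernel. The other is the identity by construction, since $\varphi|_N$ was chosen to match $\chi\circ\operatorname{art}^{-1}$. I expect the main obstacle to be the bookkeeping of identifications rather than any deep input. One must pin down exactly which $G$-structures on $L$ are being counted (those inducing the given structure on $K$), and show that the residual freedom is precisely conjugation by $N$. That freedom is invisible on $\operatorname{Sur}_H$ because $N$ is abelian, and it exhausts all choices because $H^1(H,N)=0$. The vanishing of $H^1$ and $H^2$ is where coprimality is essential; I would invoke it through the standard fact that $\lvert H\rvert$ annihilates $H^i(H,N)$ for $i\ge1$ while multiplication by $\lvert H\rvert$ is invertible on $N$.
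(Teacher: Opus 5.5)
\textbf{The gap: well-definedness on isomorphism classes in $\mathcal{C}(G)$.} Most of your construction is sound: the Galois-stability of $\ker\chi$ making $L/\mathbb{Q}$ Galois, the equivariance check via conjugation of the Artin map, Schur--Zassenhaus from $H^2(H,N)=0$, and $N$-conjugacy of the identifications from $H^1(H,N)=0$. (The paper itself gives no argument here; it only quotes the result.)

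The problem is the sentence asserting that an isomorphism of $G$-fields ``fixing the $H$-structure on $K$'' forces $\bar g=1$. Write $\psi$ for the given structure on $K$. By Corollary~\ref{cor: Galois G-extensions}, $(L,\varphi)\cong(L,\varphi\circ c_g)$ for every $g\in G$. The structure $(L,\varphi\circ c_g)$ induces $\psi\circ c_{\bar g}$ on $K$, and this equals $\psi$ exactly when $\bar g\in Z(H)$. The condition $\bar g=1$ is the stronger requirement that the isomorphism $\varphi(g)$ restrict to the identity of $K$. That requirement is not part of the equivalence relation defining $\mathcal{C}(G)$.

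For $\bar g=z\in Z(H)$, your recipe gives $\chi_{(L,\varphi\circ c_g)}=z^{-1}\cdot\chi_L=\chi_L\circ\psi(z)^{-1}$. This is again an element of $\operatorname{Sur}_H(\operatorname{C}_K,N)$ with the same kernel, but in general a different map. So under your stated reading, $L\mapsto\chi_L$ is not well defined on $\{L\in\mathcal{C}(G)\mid K\subseteq L\}$. In fact no bijection exists there: $\chi\mapsto L$ is surjective, with fibres exactly the $Z(H)$-orbits.

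In the paper's own example $G=S_3$, the group $H=\mathbb{Z}/2\mathbb{Z}=Z(H)$ acts on $N=\mathbb{Z}/3\mathbb{Z}$ by inversion. Hence the distinct maps $\chi$ and $\chi^{-1}$ give the same element of $\mathcal{C}(S_3)$ (recall $\operatorname{Out}(S_3)=1$), and the map is exactly $2$-to-$1$. For $A_4$ it is $3$-to-$1$.

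\textbf{How to repair it.} Your argument is correct verbatim once the right-hand side is taken to be pairs $(L,\iota)$. Here $\iota\colon K\xrightarrow{\sim}L^N$ is an isomorphism of Galois $H$-fields, and pairs are taken modulo isomorphisms compatible with $\iota$. Equivalently, these are surjective lifts $G_{\mathbb{Q}}\to G$ of the fixed map $G_{\mathbb{Q}}\to H$ cutting out $K$, up to conjugation by $N$. With this reading only $g$ with $\bar g=1$ are allowed, and your step is right.

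You should state this refined reading explicitly rather than assert well-definedness on isomorphism classes in $\mathcal{C}(G)$. Alternatively, keep $\mathcal{C}(G)$ and replace the left-hand side by $\operatorname{Sur}_H(\operatorname{C}_K,N)/Z(H)$. Nothing downstream in the paper is affected, since it only uses the surjection, i.e.\ the bound $\#\{L\in\mathcal{C}(G)\mid K\subseteq L\}\leq\lvert\operatorname{Sur}_H(\operatorname{C}_K,N)\rvert$.
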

    Note that since $N$ is abelian, a continuous map $G_{K_{\mathfrak{p}}}\to N$ must factor through $K_{\mathfrak{p}}^{*}\to N$ by local Class Field Theory, where $\mathfrak{p}$ is a prime of $K$.
    In particular, when $\mathfrak{p}\nmid\lvert N\rvert$, the maximal tamely ramified abelian extension $K_{\mathfrak{p}}^{ab,t}/K$ has Galois group isomorphic to $\hat{\mathbb{Z}}\times\mu_{q-1}$, where $q$ is the size of the residue field of $K_{\mathfrak{p}}$.
    The group $\hat{\mathbb{Z}}$ corresponds to the maximal unramified extension.
    So, there is a natural quotient map $G_{K_{\mathfrak{p}}}^t\to G_{K_{\mathfrak{p}}}^{ab,t}$, and we could take the ``tame inertia generator'' $y_{\mathfrak{p}}^{t}$ as some generator of $\mu_{q-1}$.
    That is, in the context of the local Class Field Theory, for each continuous map $\chi:K_{\mathfrak{p}}^{*}\to N$ with $\mathfrak{p}\nmid\lvert N\rvert\infty$, the tame inertia generator $y_{\mathfrak{p}}^{t}$ is some generator of $\mu_{q-1}\subseteq K_{\mathfrak{p}}^{*}$.
    \begin{definition}\label{def: relative product of ramified primes}
    	Define the weight $c_{G}:G\to\mathbb{Z}_{\geq0}$ by $c_{G}(g)=1$ for all $g\neq e_{G}$, and let $C:\mathcal{C}\to\mathbb{Z}_{+}$ be the generalized discriminant associated to $c_{G}$ given by the formula
    	\begin{equation*}
    		C(L):=\prod_{p\nmid\lvert G\rvert\infty}C_{p}(L_{p})\cdot\prod_{p\mid\lvert G\rvert}d_{L_{p}}.
    	\end{equation*}
    	\begin{enumerate}
    		\item Let $K\in\mathcal{D}$.
    		For each prime $\mathfrak{p}\nmid\lvert G\rvert d_{K}$ of $K$, for each continuous homomorphism $\chi_{\mathfrak{p}}:K_{\mathfrak{p}}^{*}\to N$ that is $H$-equivariant, define
    		\begin{equation*}
    			C_{\mathfrak{p}}(\chi_{\mathfrak{p}})=p^{c_{N}(\chi_{p}(y_{p}^{t}))/n_{p}},
    		\end{equation*}
    		where $y_{p}^{t}$ is taken to be any generator of $\mu(K_{\mathfrak{p}})$, and $n_{p}$ is the number of distinct primes of $K$ above $p$.
    		Define
    		\begin{equation*}
    			C'(L/K):=\prod_{\mathfrak{p}\nmid d_{K}\lvert G\rvert}C_{\mathfrak{p}}(\chi_{\mathfrak{p}})
    			\cdot\prod_{\mathfrak{p}\nmid d_{K},\mathfrak{p}\mid\lvert G\rvert}\mathfrak{Nd}_{L_{\mathfrak{p}}/K_{\mathfrak{p}}},
    		\end{equation*}
    		where $\chi:\operatorname{C}_{K}\to N$ is the continuous surjective map corresponding to $L/K$.
    		The notation $C'$ means that it is different from the generalized discriminant $C(L/K)=\prod_{\mathfrak{p}}C_{\mathfrak{p}}(L_{\mathfrak{p}}/K_{\mathfrak{p}})$ as in the introduction part.
    		\item Let $c_{H}:H\to\mathbb{Z}_{\geq0}$ be the weight defined by $c_{H}(h)=1$ for all $h\neq e_{H}$.
    		For each $p\nmid\lvert G\rvert\infty$, define
    		\begin{equation*}
    			\bar{C}_{p}(K_{p}):=p^{c_{H}(\bar{\rho}_{p}(y_{p}^{t}))},
    		\end{equation*}
    		where $\bar{\rho}_{p}:G_{\mathbb{Q}_{p}^{t}}\to H$ is the continuous homomorphism that corresponds to $K_{p}$.
    		For each $K\in\mathcal{D}$, define
    		\begin{equation*}
    			\bar{C}(K):=\prod_{p\nmid\lvert G\rvert\infty}\bar{C}_{p}(K_{p})\cdot\prod_{p\mid\lvert G\rvert}d^{[L:K]}_{K_{p}}.
    		\end{equation*}
    	\end{enumerate}
    \end{definition}
    Let us show that this definition really admits the desired property.
    \begin{lemma}
    	For each $L\in\mathcal{C}$, let $K:=L^{N}$.
    	We have
    	\begin{equation*}
    		C(L)=\bar{C}(K)C'(L/K).
    	\end{equation*}
    \end{lemma}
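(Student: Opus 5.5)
The identity is multiplicative over places, so I will prove it one rational prime $p$ at a time. I will show that the $p$-part of $C(L)$ equals the $p$-part of $\bar{C}(K)$ times the product over primes $\mathfrak{p}\mid p$ of $K$ of the factors in $C'(L/K)$. I will split the primes into three classes: $p\mid\lvert G\rvert$; $p\nmid\lvert G\rvert$ with $p$ ramified in $K$; and $p\nmid\lvert G\rvert$ with $p$ unramified in $K$.

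\emph{Primes $p\mid\lvert G\rvert$.} Here $C$ uses the honest local discriminant $d_{L_p}$. By the transitivity formula $d_{L}=d_K^{[L:K]}\mathfrak{N}\mathfrak{d}_{L/K}$, localized at $p$ via Lemma~\ref{lemma: local specification in field extensions}(2),(3), we get $d_{L_p}=d_{K_p}^{[L:K]}\prod_{\mathfrak{p}\mid p}\mathfrak{Nd}_{L_{\mathfrak{p}}/K_{\mathfrak{p}}}$. The first factor is exactly the $p$-part of $\bar{C}(K)$. When $p\nmid d_K$, the second factor is the term appearing in $C'(L/K)$. The only subtlety is when $p\mid\lvert G\rvert$ and $p\mid d_K$, since $C'$ omits those $\mathfrak{p}$. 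I would read the definition as placing the relative discriminant there into the $\bar{C}$ or $C'$ bookkeeping, or else note that the statement intends the $\mathfrak{p}\mid\lvert G\rvert$ factor without the condition $\mathfrak{p}\nmid d_K$. I will adjust the convention so the factorization is literally the transitivity formula.

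\emph{Tame primes $p\nmid\lvert G\rvert\infty$.} Here $C_p(L_p)=p$ if $p$ ramifies in $L$ and $1$ otherwise. Let $y$ be the image of the tame inertia generator in $G$, with image $\bar y$ in $H$. Since $\gcd(\lvert N\rvert,\lvert H\rvert)=1$ and the tame inertia group is cyclic, $y$ has order either coprime to $p_N:=\lvert N\rvert$ or dividing it.

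If $\bar y\neq e_H$, then $p$ ramifies in $K$, so $\bar{C}_p(K_p)=p=C_p(L_p)$. In this case no $\mathfrak{p}\mid p$ contributes to $C'$ because of the condition $\mathfrak{p}\nmid d_K$, and both sides agree.

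If $\bar y=e_H$, then $y\in N$ and $p$ is unramified in $K$, so $\bar{C}_p=1$. By local class field theory the restriction of $\chi$ to $K_{\mathfrak{p}}^*$ sends the generator of $\mu_{q-1}$ to a conjugate of $y$ for each of the $n_p$ primes $\mathfrak{p}\mid p$. These local images are $H$-conjugate by the $H$-equivariance of $\chi$, so $c_N$ takes the same value at every $\mathfrak{p}$. Hence $\prod_{\mathfrak{p}\mid p}C_{\mathfrak{p}}(\chi_{\mathfrak{p}})=p^{n_p c_N(y)/n_p}=p^{c_G(y)}=C_p(L_p)$. This is where the normalization exponent $1/n_p$ is designed to cancel.

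\emph{Expected obstacle.} The hard part is making precise that the tame inertia image of $\chi_{\mathfrak{p}}:K_{\mathfrak{p}}^*\to N$ agrees with the inertia image of $\rho_p$ restricted to $G_{K_{\mathfrak{p}}}$. This is compatibility of the local reciprocity map with restriction, together with the fact that $K_{\mathfrak{p}}/\mathbb{Q}_p$ is unramified, so the tame inertia of $\mathbb{Q}_p$ and $K_{\mathfrak{p}}$ coincide. I would cite the standard functoriality of local class field theory under the inclusion $G_{K_{\mathfrak{p}}}\subseteq G_{\mathbb{Q}_p}$. I would also check that the choice of generator does not matter, using invariance of $c_N$ under invertible powering.

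Multiplying the local identities over all $p$ gives $C(L)=\bar{C}(K)C'(L/K)$.
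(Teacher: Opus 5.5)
Your prime-by-prime argument is essentially the paper's proof: wild primes via the tower formula $d_{L_p}=d_{K_p}^{[L:K]}\prod_{\mathfrak{p}\mid p}\mathfrak{Nd}_{L_{\mathfrak{p}}/K_{\mathfrak{p}}}$, and tame primes split by whether $\bar{\rho}_p(y_p^t)$ is trivial, with more care than the paper about the $1/n_p$ normalization. Your reading of the wild factor of $C'(L/K)$ without the condition $\mathfrak{p}\nmid d_K$ is exactly what the paper's displayed computation silently uses; the only slip is your aside that $y$ has order either coprime to or dividing $\lvert N\rvert$, which is false in general (take $G=N\times H$), but you never use it because the case split on $\bar{y}$ is exhaustive.
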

    \begin{proof}%
    	This just follows from the computation.
    	Fix a field $L\in\mathcal{C}$ with $K=L^{N}$.
    	For each prime $p$, let $\rho_{p}:G_{\mathbb{Q}_{p}}\to G$ be the continuous map corresponding to $L_{p}$.
    	The quotient map $G\to H$ naturally induces a continuous map $\bar{\rho}_{p}:G_{\mathbb{Q}_{p}}\to H$ that corresponds to $K_{p}$.
    	For a prime $p\nmid\lvert G\rvert\infty$, we have that $p\mid C(L)\iff \rho_{p}(y_{p}^{t})$ is nontrivial.
    	Therefore, either $\bar{\rho}_{p}(y_{p}^{t})$ is nontrivial, or $\rho_{p}(y_{p}^{t})\in N\backslash\{e_{G}\}$.
    	Let $\chi:\operatorname{C}_{K}\to N$ be the surjective continuous $H$-morphism that corresponds to $L/K$.
    	Using these basic observations, we have
    	\begin{equation*}%
    		\begin{aligned}%
    			C(L)=& \prod_{p\mid\lvert G\rvert}d_{L_{p}}\cdot\prod_{p\nmid\lvert G\rvert\infty}C_{p}(L_{p}) \\%
    			=& \prod_{p\mid\lvert G\rvert}d_{K_{p}}^{[L:K]}\prod_{\mathfrak{p}\mid p}\mathfrak{Nd}_{L_{\mathfrak{p}}/K_{\mathfrak{p}}}%
    			\cdot\prod_{p\nmid\lvert G\rvert\infty}p^{c_{G}(\rho_{p}(y_{p}^{t}))} \\%
    			=& \prod_{p\mid\lvert G\rvert}d_{K_{p}}^{[L:K]}\prod_{\mathfrak{p}\mid p}\mathfrak{Nd}_{L_{\mathfrak{p}}/K_{\mathfrak{p}}}%
    			\cdot\prod_{%
    				\substack{%
    					p\nmid\lvert G\rvert\infty \\%
    					\bar{\rho}_{p}(y_{p}^{t})\neq e_{H}%
    				}%
    			}p%
    			\cdot\prod_{%
    				\substack{%
    					p\nmid\lvert G\rvert\infty \\%
    					\rho_{p}(y_{p}^{t})\in N\backslash\{e_{G}\}%
    				}%
    			}p \\%
    			=& \Bigl(\prod_{p\mid\lvert G\rvert}d_{K_{p}}^{[L:K]}\prod_{p\nmid\lvert G\rvert\infty}\bar{C}_{p}(\bar{\rho}_{p})\Bigr)%
    			\cdot \Bigl(\prod_{\mathfrak{p}\mid\lvert G\rvert}\mathfrak{Nd}_{L_{\mathfrak{p}}/K_{\mathfrak{p}}}\prod_{\mathfrak{p}\nmid\lvert G\rvert\infty}C_{\mathfrak{p}}(\chi_{\mathfrak{p}})\Bigr) \\%
    			=& \bar{C}(K)\cdot C'(L/K)%
    		\end{aligned}
    	\end{equation*}%
    \end{proof}%
    In the rest of this section, we use this product of ramified primes $C$ as the counting function for $\mathcal{C}$.
    Moreover, we need an assumption on the statistics for $\mathcal{D}$.
    \begin{hypothesis}\label{hypthesis}
    	Keep the notations in the Definition~\ref{def: notations of S6}, there exists some non-negative integer $b$ such that
    	\begin{equation*}
    		\sum_{
    			\substack{
    				K\in\mathcal{D}\\
    				\bar{C}(K)<X
    			}
    		}\lvert\operatorname{Hom}_{H}(\operatorname{Cl}_{K},N)\rvert\ll X(\log X)^{b}
    	\end{equation*}
    	as $X\to\infty$.
    \end{hypothesis}
    \begin{example}
    	For example, let $\mathcal{C}$ be the set of Galois $S_{3}$-fields excluding the ones with $\zeta_{3}\in L$.
    	Using the result by Davenport-Heilbronn~\cite{Davenport1971Cubic} (see also Bhargava~\cite{bhargava2013davenport}), we know that
    	\begin{equation*}
    		\sum_{
    			\substack{
    				[K:\mathbb{Q}]=2 \\
    				K\neq\mathbb{Q}(\mu_{3}),d_{K}<X
    			}
    		}\lvert\operatorname{Hom}(\operatorname{Cl}_{K},\mathbb{Z}/3\mathbb{Z})\rvert\asymp X
    	\end{equation*}
    	as $X\to\infty$.
    	Roughly speaking, the Hypothesis is true for $\mathcal{C}$ with $b=0$.
    \end{example}
    Before proving the main result of this section, we need one more lemma as a technical tool of the estimate.
    \begin{lemma}\label{lemma: upper bound estimate S6}
    	Keep the notations in the Definition~\ref{def: notations of S6}.
    	For each positive integer $\gamma$, there exists some constant $c_{\gamma}>0$, for each $K\in\mathcal{D}$, and for each $X>0$, we have
    	\begin{equation*}
    		\#\{L\in\mathcal{C}_{\Omega}^{\gamma}\mid K\subseteq L,\, C'(L/K)<X\}\leq c_{\gamma}\lvert\operatorname{Hom}_{H}(\operatorname{Cl}_{K},N)\rvert\max\{1,\frac{X}{\log X}(\log\log X)^{\gamma-1}\}.
    	\end{equation*}
    \end{lemma}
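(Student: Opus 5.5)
Fix $K\in\mathcal{D}$. By \cite[Proposition 7.6]{wang2026invariant}, the fields $L\in\mathcal{C}$ containing $K$ correspond to surjective $H$-equivariant continuous maps $\chi:\operatorname{C}_K\to N$. I will therefore count such $\chi$, grouped by their restriction to the unit idèles, and imitate the proof of Lemma~\ref{lemma: restricted ramification for abelian extensions} in an $H$-equivariant form.

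\textbf{Fibres of the local data.} Let $S$ be the set of primes of $K$ dividing $\lvert G\rvert d_K\infty$ together with the primes where $\chi$ ramifies. Apply $\operatorname{Hom}_H(-,N)$ to the exact sequence $1\to J_K^S/K^S\to\operatorname{C}_K\to\operatorname{Cl}_K^S\to1$. It is left exact, so two maps $\chi,\chi'$ with the same restriction to $J_K^S/K^S$ differ by an element of $\operatorname{Hom}_H(\operatorname{Cl}_K^S,N)$. Since $\operatorname{Cl}_K^S$ is a quotient of $\operatorname{Cl}_K$, this set has size at most $\lvert\operatorname{Hom}_H(\operatorname{Cl}_K,N)\rvert$. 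The restriction to $J_K^S/K^S$ is in turn determined by the local components $\chi_{\mathfrak{p}}$ for $\mathfrak{p}\in S$, because $\chi$ is trivial on $U_{\mathfrak{p}}$ for $\mathfrak{p}\notin S$. Hence
\begin{equation*}
\#\{L\supseteq K:\ L\text{ has prescribed local data}\}\leq\lvert\operatorname{Hom}_H(\operatorname{Cl}_K,N)\rvert .
\end{equation*}

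\textbf{Counting admissible local data.} At primes dividing $\lvert G\rvert d_K\infty$ there are only finitely many possible restrictions $\chi_{\mathfrak{p}}\vert_{U_{\mathfrak{p}}}$, and this number is bounded in terms of $[K:\mathbb{Q}]=\lvert H\rvert$ and $N$ alone. The point is that $K$ varies, so the number of primes dividing $d_K$ is unbounded. I expect to use the $H$-equivariance here: since $H$ permutes the primes above a given rational prime $p$, the tame part at a prime $\mathfrak{p}\mid d_K$ over $p\nmid\lvert G\rvert$ should be forced by the weight condition defining $\mathcal{C}^{\gamma}_\Omega$ and by the fact that $C$ only sees $p$ once. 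I expect this to be the main obstacle, namely controlling the primes $p\mid d_K$ where $L/K$ may also ramify, uniformly in $K$. Those primes contribute to $C(L)$ only through $\bar C(K)$, so $C'(L/K)$ does not see them. My first attempt is to note that at such $p$ the image of inertia meets $N$ nontrivially, which puts $p$ among the $\gamma$ primes counted by $\Omega$. Then at most $\gamma$ such primes occur, and each has boundedly many choices, giving a factor $a^{\gamma}$ that can be absorbed into $c_\gamma$.

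\textbf{Primes tame and unramified in $K$.} At a rational prime $p\nmid\lvert G\rvert d_K$, equivariance means $\chi$ is determined on all $\mathfrak{p}\mid p$ by its value at one of them. The inertia there lies in $N\setminus\{e\}=\Omega$, and the product of the $C_{\mathfrak{p}}$ over $\mathfrak{p}\mid p$ equals $p$; that is what the normalisation by $n_p$ in Definition~\ref{def: relative product of ramified primes} achieves. So the possible local data with $C'(L/K)<X$ are indexed by squarefree $n<X$ with $\omega(n)\le\gamma$, up to the bounded factors above. Each such $n$ carries at most $a^{\omega(n)}$ choices of local character, where $a$ is the number of possible tame characters into $N$. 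Finally I apply the standard estimate
\begin{equation*}
\#\{n<X \text{ squarefree}:\ \omega(n)=j\}\ll \frac{X}{\log X}(\log\log X)^{j-1}
\end{equation*}
and sum over $j\le\gamma$ (the case $j=0$ gives the term $1$). Multiplying by the fibre bound $\lvert\operatorname{Hom}_H(\operatorname{Cl}_K,N)\rvert$ gives the stated inequality, with $c_\gamma$ depending only on $\gamma$, $G$ and the bounded local counts.
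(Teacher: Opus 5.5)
Your fibre bound via $\operatorname{Hom}_H(\operatorname{Cl}_K,N)$ and your count at the primes $p\nmid\lvert G\rvert d_K$ are essentially the paper's argument. The step at the tame primes $p\mid d_K$, however, fails. Membership in $\mathcal{C}_{\Omega}^{\gamma}$ depends on whether the inertia generator $\rho_p(y_p^t)$ itself lies in $\Omega=N\setminus\{e_G\}$, not on whether the group it generates meets $N$. If $p\mid d_K$, then $\bar{\rho}_p(y_p^t)\neq e_H$, so $\rho_p(y_p^t)\notin N$. Such primes are therefore never among the $\gamma$ counted ones, $C'(L/K)$ ignores them by definition, and their number is unbounded as $K$ varies. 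Whenever some $y\in G\setminus N$ has $\langle y\rangle\cap N\neq\{e_G\}$, $L/K$ can ramify at these primes. That gives up to $a^{\omega(d_K)}$ choices of local data, not $a^{\gamma}$. The paper's proof does not avoid this either: it carries these primes in the prefactor $a^{\omega(\bar{C}(K))}$ of $D_{\gamma}(s)$. So the constant it actually obtains depends on $K$, despite the remark after the lemma.

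In fact no $K$-independent $c_\gamma$ exists in this generality. Take $G=\mathbb{Z}/6\mathbb{Z}$, $N=\mathbb{Z}/3\mathbb{Z}$, $H=\mathbb{Z}/2\mathbb{Z}$. For quadratic $K$, $H$ acts on $\operatorname{Cl}_K$ by inversion and trivially on $N$, so $\lvert\operatorname{Hom}_H(\operatorname{Cl}_K,N)\rvert=1$. Yet for $L=KF$ with $F$ cyclic cubic, the conductor of $F$ may contain any subset of the primes $q\equiv1\bmod 3$ dividing $d_K$ (two characters for each) without changing $\gamma$ or $C'(L/K)$. This multiplies the count by $3^{\#\{q\mid d_K:\ q\equiv1\bmod 3\}}$.

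What is missing is an extra hypothesis, for instance that $H$ acts fixed-point-freely on $N$; this holds for $S_3$, the case needed later. Under it, every $y\in G\setminus N$ is a $p'$-element, because its $p$-part lies in $N$ and is centralised by its $p'$-part. Hence $\langle y\rangle\cap N=\{e_G\}$, and $L/K$ is unramified above every tame $p\mid d_K$. You should also run the fibre argument with $S$ the archimedean places only, prescribing just the restrictions $\chi\vert_{U_{\mathfrak{p}}}$, so that no Frobenius values at primes of $d_K$ enter. Then the local data at those primes is forced to be trivial, and your count goes through with a uniform constant.
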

    \begin{proof}
    	Let $a$ be a positive integer such that for each rational prime $p$ we have
    	\begin{equation*}
    		\#\{\Sigma_{p}\text{ is a }G-\text{specification at }p\}\leq a.
    	\end{equation*}
    	Define $T:=T_{\mathbb{Q}}(\lvert G\rvert d_{K})$ and recall $I_{T}^{+,\mu}$ from Definition~\ref{def: the set of G-specifications Phi}.
    	For each $\gamma>0$, define
    	\begin{equation*}
    		D_{\gamma}(s):=
    		a^{\omega(\bar{C}(K))}
    		\prod_{\mathfrak{p}\mid\lvert G\rvert\infty}\Bigl(\sum_{\chi_{\mathfrak{p}}:K_{\mathfrak{p}}^{*}\to N}\mathfrak{Nd}_{\chi_{\mathfrak{p}}}^{-s}\Bigr)
    		\cdot\sum_{
    			\substack{
    				d\in I_{T}^{+,\mu}\\
    				\omega(d)=\gamma
    			}
    		}\prod_{p\mid d}ap^{-s}=\sum_{n=1}^{\infty}a_{\gamma,n}n^{-s}.
    	\end{equation*}
    	Recall that $C(L)=\bar{C}(K)C'(L/K)$.
    	This implies that for each positive $n$, the set $\{L\in\mathcal{C}_{\Omega}^{\gamma}\mid K\subseteq L,\, C'(L/K)=n\}$ is nonempty only if $n$ satisfies the following two conditions simultaneously:
    	\begin{enumerate}
    		\item for each $p\in T$, if $p\mid\bar{C}(K)$ then $p\nmid n$;
    		\item there exists $d\in I_{T}^{+,\mu}$ with $\omega(d)=\gamma$ such that
    		\begin{equation*}
    			d\mid n.
    		\end{equation*} 
    	\end{enumerate}
    	Let $n$ be an integer with the above property.
    	Apply the Lemma~\ref{lemma: restricted ramification for abelian extensions} for the $H$-morphisms $\operatorname{C}_{K}\to N$, we have
    	\begin{align*}
    		& \#\{L\in\mathcal{C}_{\Omega}^{\gamma}\mid K\subseteq L,\, C'(L/K)=n\}\\
    		\leq & \lvert\operatorname{Hom}_{H}(\operatorname{Cl}_{K},N)\rvert
    		\cdot\prod_{\mathfrak{p}\mid\gcd(n,\lvert G\rvert)}\lvert\operatorname{Hom}(K_{\mathfrak{p}}^{*},N)\rvert \\
    		&\cdot\prod_{p\mid n\bar{C}(K),p\in T}\#\{\Sigma_{p}\text{ is a }G\text{-specification at }p\}\\
    		\leq & \lvert\operatorname{Hom}_{H}(\operatorname{Cl}_{K},N)\rvert a_{\gamma,n}.
    	\end{align*}
    	By the author~\cite[the Lemma 5.1 and 5.2]{wang2026invariant}, there exists some constant $c_{\gamma}$ such that for all $X>0$, we have
    	\begin{equation*}
    		\sum_{n<X}a_{\gamma,n}\leq c_{\gamma}\max\{1,\frac{X}{\log X}(\log\log X)^{\gamma-1}\}.
    	\end{equation*}
    	So, for each $K\in\mathcal{D}$, we have
    	\begin{align*}
    		& \#\{L\in\mathcal{C}_{\Omega}^{\gamma}\mid K\subseteq L,\, C'(L/K)<X\} \\
    		\leq & \lvert\operatorname{Hom}_{H}(\operatorname{Cl}_{K},N)\rvert\sum_{n<X}a_{\gamma,n} \\
    		\leq & c_{\gamma}\lvert\operatorname{Hom}_{H}(\operatorname{Cl}_{K},N)\rvert\max\{1,\frac{X}{\log X}(\log\log X)^{\gamma-1}\}.
    	\end{align*}
    \end{proof}
    The key point of this result is that the choice of the constant $c_{\gamma}$ is independent of the field $K\in\mathcal{D}$ as in the statement.
    Now we could prove the following.
    \begin{theorem}\label{thm: upper bound estimate S6}
    	Keep the notations in the Definition~\ref{def: notations of S6}.
    	If they satisfy the Hypothesis~\ref{hypthesis}, then for the prime $p\mid\lvert N\rvert$, for each positive integer $\gamma$, we have
    	\begin{equation*}
    		N_{\mathcal{C}_{\Omega}^{\gamma},C}(X)\ll X(\log X)^{b}(\log\log X)^{\gamma}.
    	\end{equation*}
    \end{theorem}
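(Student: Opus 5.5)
We prove the bound by fibering $\mathcal{C}_{\Omega}^{\gamma}$ over the intermediate fields $K=L^{N}\in\mathcal{D}$ and using the factorization $C(L)=\bar{C}(K)C'(L/K)$ from the lemma above. Each $L\in\mathcal{C}_{\Omega}^{\gamma}$ with $C(L)<X$ has $K:=L^{N}\in\mathcal{D}$ with $\bar{C}(K)<X$, and $C'(L/K)<X/\bar{C}(K)$. Summing over $K$ gives
\begin{equation*}
	N_{\mathcal{C}_{\Omega}^{\gamma},C}(X)\leq\sum_{\substack{K\in\mathcal{D}\\ \bar{C}(K)<X}}\#\{L\in\mathcal{C}_{\Omega}^{\gamma}\mid K\subseteq L,\,C'(L/K)<X/\bar{C}(K)\}.
\end{equation*}

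The inner count is controlled by Lemma~\ref{lemma: upper bound estimate S6}, whose constant $c_{\gamma}$ does not depend on $K$. Write $h(K):=\lvert\operatorname{Hom}_{H}(\operatorname{Cl}_{K},N)\rvert$ and $Y:=X/\bar{C}(K)$. Then the inner count is at most
\begin{equation*}
	c_{\gamma}\,h(K)\max\{1,\tfrac{Y}{\log Y}(\log\log Y)^{\gamma-1}\}.
\end{equation*}
There is a bookkeeping point here. A prime of $L$ with inertia in $\Omega$ might also ramify in $K$. However, $\rho_{p}(y_{p}^{t})\in N$ forces $p$ to be unramified in $K$, so the $\gamma$ primes in $\Omega$ are exactly the ones counted by the lemma. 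The lemma's own $T$ already accounts for this.

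To handle the $\max$ uniformly, bound it crudely by $\ll Y(\log\log(Y+3))^{\gamma-1}+1\ll Y(\log\log X)^{\gamma-1}$. This is valid whenever $Y\geq1$, which always holds since $\bar{C}(K)<X$. The total is then
\begin{equation*}
	\ll (\log\log X)^{\gamma-1}\,X\sum_{\bar{C}(K)<X}\frac{h(K)}{\bar{C}(K)}.
\end{equation*}
The remaining sum is estimated by partial summation against Hypothesis~\ref{hypthesis}. Set $A(t):=\sum_{\bar{C}(K)<t}h(K)\ll t(\log t)^{b}$. Then
\begin{equation*}
	\sum_{\bar{C}(K)<X}\frac{h(K)}{\bar{C}(K)}=\frac{A(X)}{X}+\int_{1}^{X}\frac{A(t)}{t^{2}}\,dt\ll(\log X)^{b}+\int_{1}^{X}\frac{(\log t)^{b}}{t}\,dt\ll(\log X)^{b+1}.
\end{equation*}
This yields $X(\log X)^{b+1}(\log\log X)^{\gamma-1}$.

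That is one $\log X$ worse than the target, which trades a $\log X$ for a $\log\log X$. Closing that gap is the main obstacle. The fix is to keep the $1/\log Y$ factor instead of discarding it. Split the $K$-sum into two ranges.
\begin{enumerate}[(a)]
	\item For $\bar{C}(K)\leq X^{1/2}$ we have $\log Y\geq\frac12\log X$. The $1/\log Y$ saves a full $\log X$, so this range contributes $\ll X(\log X)^{b}(\log\log X)^{\gamma-1}$.
	\item For $X^{1/2}<\bar{C}(K)<X$, partial summation of $h(K)\,Y/\log Y$ needs care near $Y\approx1$, where the $\max\{1,\cdot\}$ term dominates. Dyadically decompose $\bar{C}(K)\in[X/2^{j+1},X/2^{j})$ for $j\leq\frac12\log_{2}X$. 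By the Hypothesis, each block contributes $\ll X(\log X)^{b}\cdot\frac{1}{\max(1,j)}(\log\log X)^{\gamma-1}$. Summing $1/j$ over $j\ll\log X$ produces exactly one extra factor of $\log\log X$.
\end{enumerate}
Adding the two ranges gives the claimed $X(\log X)^{b}(\log\log X)^{\gamma}$.
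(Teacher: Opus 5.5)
Your proposal is correct and follows the paper's argument. You fiber over $K=L^{N}\in\mathcal{D}$ via $C(L)=\bar{C}(K)C'(L/K)$, bound each fiber by Lemma~\ref{lemma: upper bound estimate S6} with its $K$-uniform constant $c_{\gamma}$, and sum against Hypothesis~\ref{hypthesis}. The only difference is the final summation. The paper writes the sum as the Riemann--Stieltjes integral $\int_{1}^{X}f_{\gamma}(X/t)\,\mathrm{d}h(t)$, integrates by parts twice, and arrives at $X(\log X)^{b}\int_{e}^{X}\frac{(\log\log u)^{\gamma-1}}{u\log u}\,\mathrm{d}u$. Your split at $\bar{C}(K)=X^{1/2}$ plus dyadic blocks is the discrete version of that computation: $\sum_{j}1/j$ plays the role of the integral, and it correctly places the extra $\log\log X$ in the range where $X/\bar{C}(K)$ is small.
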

    \begin{proof}
    	We are going to do the computation for $N_{\mathcal{C}_{\Omega}^{\gamma},C}(X)$.
    	Recall that there exists some non-negative integer $b$ such that
    	\begin{equation*}
    		h(X):=\sum_{
    			\substack{
    				K\in\mathcal{D}\\
    				\bar{C}(K)<X
    			}
    		}\lvert\operatorname{Hom}_{H}(\operatorname{Cl}_{K},N)\rvert\ll X(\log X)^{b}
    	\end{equation*}
    	as $X\to\infty$.
    	For simplicity, define 
    	\begin{equation*}
    		f_{\gamma}(x):=\left\{\begin{aligned}
    			&1& &\text{ if }& x\in(0,e] \\
    			&\max\{1,\frac{x}{\log x}(\log\log x)^{\gamma-1}\}& &\text{ else if }& x>e,
    		\end{aligned}\right.
    	\end{equation*}
    	and
    	\begin{equation*}
    		g(x):=\left\{\begin{aligned}
    			&1& &\text{ if }& x\in(0,1] \\
    			&\max\{1,x(\log x)^{b}\}& &\text{ else if }& x>1.
    		\end{aligned}\right.
    	\end{equation*}
    	We first have the following estimate
    	\begin{align*}
    		N_{\mathcal{C}_{\Omega}^{\gamma},C}(X)
    		=&\sum_{
    			\substack{
    				K\in\mathcal{D}\\
    				\bar{C}(K)<X
    			}
    		}\#\{L\in\mathcal{C}_{\Omega}^{\gamma}\mid K\subseteq L,\, C'(L/K)<\frac{X}{\bar{C}(K)}\} \\
    		\leq & \sum_{
    			\substack{
    				K\in\mathcal{D}\\
    				\bar{C}(K)<X
    			}
    		}c_{\gamma}\lvert\operatorname{Hom}_{H}(\operatorname{Cl}_{K},N)\rvert f_{\gamma}(X/\bar{C}(K)) \\
    		=&c_{\gamma}\int_{1}^{X}f_{\gamma}(X/t)\operatorname{d}h(t)
    	\end{align*}
    	where the integral of the last row is the Riemann-Stieltjes integral.
    	It suffices to consider the integral.  
    	Since $f_{\gamma}(x)$ is a continuous function, and $h(x)$ is a step function, we know that the integral-by-parts holds in this case.
    	To be precise, by Hugh L. Montgomery and Robert C. Vaughan~\cite[Theorem A.1 and A.2]{montgomery2006multiplicative}, we first know that $\int_{1}^{X}f_{\gamma}(X/t)\operatorname{d}h(t)$ exists, hence
    	\begin{align*}
    		I:=\int_{1}^{X}f_{\gamma}(X/t)\operatorname{d}h(t)
    		=& f_{\gamma}(X/t)h(t)\Big\vert_{1}^{X}-\int_{1}^{X}h(t)\operatorname{d}f_{\gamma}(X/t) \\
    		(u=X/t)\Rightarrow=& h(X)-f_{\gamma}(X)h(1)+\int_{1}^{X}h(X/u)\operatorname{d}f_{\gamma}(u)   		
    	\end{align*}
    	Note that for all positive integer $\gamma$, the function $f_{\gamma}(x)$ is increasing when $x$ is large enough.
    	So, as $X\to\infty$, we have
    	\begin{align*}
    		I\ll & g(X)+\int_{1}^{X}g(X/u)\operatorname{d}f_{\gamma}(u) \\
    		\leq & g(X)+\int_{1}^{X}\frac{X}{u}(\log X)^{b}\operatorname{d}f_{\gamma}(u) \\
    		= & X(\log X)^{b}+\frac{X(\log X)^{b}}{u}f_{\gamma}(u)\Big\vert_{1}^{X}-(\log X)^{b}\int_{1}^{X}f_{\gamma}(u)\operatorname{d}\frac{X}{u} \\
    		\ll & X(\log X)^{b}+X(\log X)^{b-1}(\log\log X)^{\gamma-1}+X(\log X)^{b}\int_{e}^{X}\frac{(\log\log u)^{\gamma-1}}{u\log u}\operatorname{d}u \\
    		=& X(\log X)^{b-1}(\log\log X)^{\gamma-1}+X(\log X)^{b}+\frac{X(\log X)^{b}}{\gamma}(\log\log u)^{\gamma}\Big\vert_{e}^{X} \\
    		=& (\gamma^{-1}+o(1))X(\log X)^{b}(\log\log X)^{\gamma}.
    	\end{align*}    	
    	Therefore, for each positive integer $\gamma$, we have shown that 
    	\begin{equation*}
    		N_{\mathcal{C}_{\Omega}^{\gamma},C}(X)\ll X(\log X)^{b}(\log\log X)^{\gamma}.
    	\end{equation*}
    \end{proof}
    The above statement is an upper bound estimate for the field-counting $N_{\mathcal{C}_{\Omega}^{\gamma},C}(X)$.
    Using this information, we could show the following result on the statistics of the class groups.
    \begin{theorem}\label{thm: infinite moment S6}
    	Keep the notations as in the Definition~\ref{def: notations of S6}.
    	If they satisfy the Hypothesis~\ref{hypthesis}, and further more
    	\begin{equation*}
    		N_{\mathcal{C},C}(X)\gg X(\log X)^{b+1},
    	\end{equation*}
    	then for each non-negative integer $r$ we have that
    	\begin{equation*}
    		\mathbb{P}_{\mathcal{C},C}(\operatorname{rk}_{p}\operatorname{Cl}_{L}\leq r)
    		\quad\text{and}\quad
    		\mathbb{E}_{\mathcal{C},C}(\lvert\operatorname{Hom}(\operatorname{Cl}_{L},\mathbb{Z}/p\mathbb{Z})\rvert)=+\infty
    	\end{equation*}
    	where $p$ is the prime dividing $\lvert N\rvert$.
    \end{theorem}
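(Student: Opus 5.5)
The plan is to reduce the statement to the abstract criterion \cite[Theorem 3.14]{wang2026invariant}, exactly as in the proof of Theorem~\ref{thm: zero-probability and infinite moment S5}. That criterion needs two inputs: a uniform lower bound on $\operatorname{rk}_p\operatorname{Cl}_L$ in terms of the number of primes whose inertia generator lies in $\Omega$, and the comparison $N_{\mathcal{C}_{\Omega}^{\gamma},C}(X)=o(N_{\mathcal{C},C}(X))$ for each fixed $\gamma$ (or the stronger $N_{\mathcal{C}_{\Omega}^{\gamma},C}=o(N_{\mathcal{C}_{\Omega}^{\gamma+1},C})$).

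\textbf{Step 1: lower bound on the $p$-rank.} Apply \cite[Theorem 3.1]{wang2026invariant} to each $L\in\mathcal{C}$. Since $\Omega=N\backslash\{e_G\}$ and $N$ is a $p$-group, every tame prime $q\nmid\lvert G\rvert$ with $\rho_q(y_q^t)\in\Omega$ has ramification index divisible by $p$. Hence
\begin{equation*}
\operatorname{rk}_p\operatorname{Cl}_L\geq\#\{q\in T:\rho_q(y_q^t)\in\Omega\}-\lvert G\rvert^2,
\end{equation*}
with $T=T_{\mathbb{Q}}(\lvert G\rvert)$. In particular, if $\operatorname{rk}_p\operatorname{Cl}_L\leq r$, then $L\in\mathcal{C}_{\Omega}^{\gamma}$ for some $\gamma\leq r+\lvert G\rvert^2$.

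\textbf{Step 2: counting.} For $\gamma\geq1$, Theorem~\ref{thm: upper bound estimate S6} gives $N_{\mathcal{C}_{\Omega}^{\gamma},C}(X)\ll X(\log X)^b(\log\log X)^{\gamma}$. The case $\gamma=0$ needs separate treatment. Here $L/K$ is unramified at all tame primes, so $C'(L/K)$ is bounded by a constant depending only on the wild local data. The argument of Lemma~\ref{lemma: upper bound estimate S6}, now with only finitely many possible $n$, gives at most $c_0\lvert\operatorname{Hom}_H(\operatorname{Cl}_K,N)\rvert$ fields $L$ over each $K$. Summing over $K$ and using Hypothesis~\ref{hypthesis} yields $N_{\mathcal{C}_{\Omega}^{0},C}(X)\ll X(\log X)^b$. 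Combining with the assumed lower bound $N_{\mathcal{C},C}(X)\gg X(\log X)^{b+1}$, we get for every fixed $\gamma$
\begin{equation*}
\frac{N_{\mathcal{C}_{\Omega}^{\gamma},C}(X)}{N_{\mathcal{C},C}(X)}\ll\frac{(\log\log X)^{\gamma}}{\log X}\to0.
\end{equation*}
Summing over the finitely many $\gamma\leq r+\lvert G\rvert^2$ shows that the probability in the statement is $0$. This gives the first claim, whose statement presumably intends ``$=0$''.

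\textbf{Step 3: infinite moment.} Since $\lvert\operatorname{Hom}(\operatorname{Cl}_L,\mathbb{Z}/p\mathbb{Z})\rvert=p^{\operatorname{rk}_p\operatorname{Cl}_L}$, for any $R$ we have
\begin{equation*}
\sum_{C(L)<X}p^{\operatorname{rk}_p\operatorname{Cl}_L}\geq p^{R}\,\#\{L:C(L)<X,\ \operatorname{rk}_p\operatorname{Cl}_L>R\}=p^R(1-o(1))N_{\mathcal{C},C}(X).
\end{equation*}
Hence the $\liminf$ of the normalized moment is at least $p^R$ for every $R$, so it is $+\infty$. This is the argument packaged in \cite[Theorem 3.14]{wang2026invariant}, and one can simply cite that result once Steps 1 and 2 are verified. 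The main obstacle is Step 2 for $\gamma=0$: one must check that the constant bounding the number of wild local specifications is uniform in $K$, and also check that $\mu(L)=\{\pm1\}$ removes the contributions from roots of unity used in \cite[Theorem 3.1]{wang2026invariant}. Everything else is already supplied by Theorem~\ref{thm: upper bound estimate S6}.
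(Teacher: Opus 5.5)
This is essentially the paper's proof: the rank bound from \cite[Theorem 3.1]{wang2026invariant}, Theorem~\ref{thm: upper bound estimate S6} plus the assumed lower bound for $\gamma\geq1$, a bound $N_{\mathcal{C}_{\Omega}^{0},C}(X)\ll X(\log X)^{b}$ from Hypothesis~\ref{hypthesis}, then \cite[Theorem 3.14]{wang2026invariant}; the only difference is that for $\gamma=0$ the paper asserts a bijection between $\mathcal{C}_{\Omega}^{0}$ and $\bigcup_{K\in\mathcal{D}}\operatorname{Sur}_{H}(\operatorname{Cl}_{K},N)$, whereas you also allow the uniformly bounded wild ramification of $L/K$, which is slightly more careful. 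One caveat applies equally to both versions, and to the uniformity claimed in Lemma~\ref{lemma: upper bound estimate S6}: at a tame prime ramified in $K$ with inertia generator $g\notin N$, the inertia of $L/K$ is $\langle g\rangle\cap N$, which is nontrivial whenever $p$ divides the order of $g$, so ``$L/K$ is unramified at all tame primes'' and a $K$-uniform $c_{0}$ need $H$ to act fixed-point-freely on $N$ (true for $S_{3}$, but for $G=\mathbb{Z}/6\mathbb{Z}$ one has $b=0$ while $N_{\mathcal{C}_{\Omega}^{0},C}(X)\asymp X\log X$), so the real obstacle behind your ``main obstacle'' is these tame primes dividing $\bar{C}(K)$, not the wild ones.
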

    \begin{proof}
    	Claim: for each non-negative integer $\gamma$, we have
    	\begin{equation*}
    		N_{\mathcal{C}_{\Omega}^{\gamma},C}(X)=o(N_{\mathcal{C},C}(X)).
    	\end{equation*}
    	If the Claim is true, then the statement is just a direct corollary of the author~\cite[Theorem 3.14]{wang2026invariant}.
    	
    	By our condition and Theorem~\ref{thm: upper bound estimate S6}, this is true for each positive integer $\gamma$.
    	So, it suffices to prove the case when $\gamma=0$.
    	For each $K\in\mathcal{D}$, recall that the surjective continuous homomorphisms $\operatorname{C}_{K}\to N$ that is $H$-equivariant correspond to the Galois $N$-extensions $L/K$ such that $L/\mathbb{Q}$ is a Galois $G$-field.
    	This implies that there is a one-to-one correspondence between $\mathcal{C}_{\Omega}^{0}$ and $\bigcup_{K\in\mathcal{D}}\operatorname{Sur}_{H}(\operatorname{Cl}_{K},N)$.
    	Therefore, we have
    	\begin{align*}
    		N_{\mathcal{C}_{\Omega}^{0},C}(X)
    		=& \sum_{
    			\substack{
    				K\in\mathcal{D} \\
    				\bar{C}(K)<X
    			}
    		}\#\{L\in\mathcal{C}_{\Omega}^{0}\mid K\subseteq L,\, C(L)<X\} \\
    		\leq & \sum_{
    			\substack{
    				K\in\mathcal{D} \\
    				\bar{C}(K)<X
    			}
    		}\lvert\operatorname{Sur}_{H}(\operatorname{Cl}_{K},N)\rvert \\
    		\leq & \sum_{
    			\substack{
    				K\in\mathcal{D} \\
    				\bar{C}(K)<X
    			}
    		} \lvert\operatorname{Hom}_{H}(\operatorname{Cl}_{K},N)\rvert\ll X(\log X)^{b}.
    	\end{align*}
    	This proves that the Claim is true when $\gamma=0$.
    	So the Claim is true, hence also the theorem.
    \end{proof}
    \begin{remark}
    	The condition of the Theorem~\ref{thm: infinite moment S6} could be explained as a combination of the Cohen-Lenstra-Martinet Heuristics (see Cohen and Lenstra~\cite{cohen1984heuristics}, and Martinet and Cohen~\cite{martinet1990etude}) and the Malle-Bhargava Heuristics (see Malle~\cite{malle2004distribution}, and Bhargava~\cite{bhargava07mass}) in weak forms.
    	To be precise, for fields $K\in\mathcal{D}$, if the $N$-moment exists, then its main term should be the same as the one of the field-counting up to different coefficients, when the prime $p$ is a good prime.
    	Then by the heuristics on the field counting, $N_{\mathcal{C},C}(X)$ in general should admit a main term ``larger'' than $N_{\mathcal{D},C}(X)$, that is,
    	\begin{equation*}
    		N_{\mathcal{D},C}(X)=o(N_{\mathcal{C},C}(X)),
    	\end{equation*} 
    	hence motivating the assumption of the theorem.
    \end{remark}
    For the statement~\ref{thm: cubic fields S1} on the $S_{3}$-field in the Section~\ref{section: intro}, it could be taken as a special case of the Theorem~\ref{thm: infinite moment S6}.
    \begin{proof}[Proof of Theorem~\ref{thm: cubic fields S1}]
    	Note that $S_{3}$ admits the short exact sequence
    	\begin{equation*}
    		1\to\mathbb{Z}/3\mathbb{Z}\to S_{3}\to\mathbb{Z}/2\mathbb{Z}\to1
    	\end{equation*} 
    	So, let $\Omega:=\{(123),(132)\}$.
    	The set $\mathcal{C}$ of non-Galois cubic fields and the set of Galois $S_{3}$-fields are in the one-to-one correspondence  to each other.
    	In particular, we could define $\mathcal{C}_{\Omega}^{\gamma}$ via this correspondence.
    	And it admits a simple interpretation: a non-Galois cubic field $K\in\mathcal{C}_{\Omega}^{\gamma}$ if and only if it admits exactly $\gamma$ totally ramified primes other than $2$ and $3$.
    	By Roquette and Zassenhaus~\cite[Theorem 1]{RZ1969ClassRank}, for a cubic field $K$ with its Galois closure $L$, we have that
    	\begin{align*}
    		\operatorname{rk}_{3}\operatorname{Cl}_{L}\geq & \operatorname{rk}_{3}\operatorname{Cl}_{K} \\
    		\geq & \#\{p\nmid\infty:p\text{ is totally ramified in }K/\mathbb{Q}\} \\
    		\geq & \#\{p\nmid 6\infty:\rho_{p}(y_{p}^{t})\in\Omega\}-4,
    	\end{align*}
    	where $\rho_{p}:G_{\mathbb{Q}_{p}}\to S_{3}$ is the map corresponding to $L_{p}$.    	
    	We may also order $\mathcal{C}$ by the product of ramified primes $C$ of the Galois $S_{3}$-fields (the Definition~\ref{def: notations of S6}), that is, $C(K):=C(L)$ for a non-Galois cubic field $K$ with its Galois closure $L$.
    	By Shankar and Thorne~\cite{shankar2024asymptoticscubicfieldsordered}, we know that
    	\begin{equation*}
    		N_{\mathcal{C},\sqrt{d_{K}}}(X)\asymp X\log X.
    	\end{equation*}
    	Though our definition of $C(K)$ is different from the radical of the discriminant $\sqrt{d_{K}}$, their result allows a finite collection of local specifications (see \cite[Theorem 22]{shankar2024asymptoticscubicfieldsordered}).
    	For example, counting real cubic fields unramified at $2$ and $3$ by the radical of the discriminant has the main term $X\log X$.
    	In this case, the radical of the discriminant is literally the same as the product of ramified primes defined in the Definition~\ref{def: notations of S6}.
    	This implies that
    	\begin{equation*}
    		N_{\mathcal{C},C}(X)\gg X\log X.
    	\end{equation*}
    	This shows that the set $\mathcal{C}$ of non-Galois cubic fields ordered by the product of ramified primes $C$ satisfies the condition of the Theorem~\ref{thm: infinite moment S6}.
    	So the Claim in the proof is true, that is, for each non-negative integer $\gamma$, we have that
    	\begin{equation*}
    		N_{\mathcal{C}_{\Omega}^{\gamma},C}(X)\ll N_{\mathcal{C},C}(X).
    	\end{equation*}
    	Moreover, let $T:=T(6)=\{p\in\mathcal{P}\mid p\nmid 6\infty\}$.
    	Then the tuple $((\mathcal{C},C),\Omega,T)$ satisfies the condition of the author~\cite[Theorem 3.14]{wang2026invariant}.
    	So, the theorem is true.
    \end{proof}

\bibliographystyle{plain}
\bibliography{references}
\end{document}